\documentclass[12pt]{amsart}

\usepackage{amsmath, amssymb, amsthm, amsfonts}

\input xy
\xyoption{all}

\usepackage{hyperref}

\swapnumbers
\numberwithin{equation}{section}

\theoremstyle{plain}
\newtheorem{theorem}[subsubsection]{Theorem}
\newtheorem{lemma}[subsubsection]{Lemma}
\newtheorem{prop}[subsubsection]{Proposition}
\newtheorem{cor}[subsubsection]{Corollary}
\newtheorem{conj}[subsubsection]{Conjecture}

\theoremstyle{definition}
\newtheorem{defn}[subsubsection]{Definition}



\setlength{\textwidth}{460pt}
\setlength{\oddsidemargin}{0pt}
\setlength{\evensidemargin}{0pt}
\setlength{\topmargin}{0pt}
\setlength{\textheight}{620pt}


\def\AA{\mathbb{A}}

\def\EE{\mathbb{E}}
\def\FF{\mathbb{F}}
\def\GG{\mathbb{G}}
\def\HH{\mathbb{H}}
\def\II{\mathbb{I}}

\def\QQ{\mathbb{Q}}
\def\RR{\mathbb{R}}
\def\SS{\mathbb{S}}

\def\ZZ{\mathbb{Z}}


\def\calO{\mathcal{O}}

\newcommand\cC{\mathcal{C}}
\newcommand\cD{\mathcal{D}}

\newcommand\cG{\mathcal{G}}

\newcommand\cK{\mathcal{K}}
\newcommand\cL{\mathcal{L}}

\newcommand\cO{\mathcal{O}}

\newcommand\cS{\mathcal{S}}

\newcommand\cW{\mathcal{W}}
\newcommand\cX{\mathcal{X}}


\def\bE{\mathbf{E}}

\def\bH{\mathbf{H}}
\def\bI{\mathbf{I}}
\def\bJ{\mathbf{J}}

\def\bS{\mathbf{S}}


\newcommand\frE{\mathfrak{E}}
\newcommand\frF{\mathfrak{F}}

\newcommand\frH{\mathfrak{H}}

\newcommand\frM{\mathfrak{M}}

\newcommand\frP{\mathfrak{P}}

\newcommand\frT{\mathfrak{T}}

\newcommand\frX{\mathfrak{X}}

\newcommand\fra{\mathfrak{a}}

\newcommand\frg{\mathfrak{g}}
\newcommand\frh{\mathfrak{h}}

\newcommand\frl{\mathfrak{l}}
\newcommand\fkm{\mathfrak{m}}
\newcommand\frn{\mathfrak{n}}
\newcommand\frp{\mathfrak{p}}
\newcommand\frq{\mathfrak{q}}

\newcommand\frt{\mathfrak{t}}


\newcommand\tilW{\widetilde{W}}





\newcommand\ab{\textup{ab}}
\newcommand\aff{\textup{aff}}

\newcommand\ev{\textup{ev}}

\newcommand{\gr}{\textup{gr}}

\newcommand\IC{\textup{IC}}

\newcommand{\Ind}{\textup{Ind}}
\newcommand{\ind}{\textup{ind}}

\newcommand\Lie{\textup{Lie}\ }

\newcommand{\odd}{\textup{odd}}

\newcommand\Out{\textup{Out}}

\newcommand\pr{\textup{pr}}

\newcommand\pt{\textup{pt}}

\newcommand\Span{\textup{Span}}
\newcommand\Spec{\textup{Spec}\ }

\newcommand\Stab{\textup{Stab}}
\newcommand\Supp{\textup{Supp}}
\newcommand\Sym{\textup{Sym}}

\newcommand\Aut{\textup{Aut}}
\newcommand\Hom{\textup{Hom}}

\newcommand{\Ext}{\textup{Ext}}


\newcommand\SL{\textup{SL}}
\renewcommand\sl{\mathfrak{sl}}

\newcommand{\Gm}{\GG_m}

\newcommand{\ad}{\textup{ad}}
\newcommand{\Ad}{\textup{Ad}}

\newcommand{\der}{\textup{der}}

\newcommand\xch{\mathbb{X}^*}
\newcommand\xcoch{\mathbb{X}_*}


\newcommand{\leftexp}[2]{{\vphantom{#2}}^{#1}{#2}}
\newcommand{\pH}{\leftexp{p}{\textup{H}}}

\newcommand{\Qlbar}{\overline{\QQ}_\ell}

\newcommand{\twtimes}[1]{\stackrel{#1}{\times}}
\newcommand{\jiao}[1]{\langle{#1}\rangle}
\newcommand{\incl}{\hookrightarrow}
\newcommand{\isom}{\stackrel{\sim}{\to}}

\newcommand{\surj}{\twoheadrightarrow}
\newcommand{\wt}[1]{\widetilde{#1}}

\newcommand\quash[1]{}
\newcommand\un{\underline}

\newcommand{\ov}{\overline}
\newcommand{\bs}{\backslash}
\newcommand\sss{\subsubsection}
\newcommand\xr{\xrightarrow}

\newcommand{\lr}[1]{(\!(#1)\!)}


\newcommand\upH{\textup{H}}
\newcommand\one{\mathbf{1}}



\renewcommand\a\alpha
\renewcommand\b\beta
\newcommand{\g}{\gamma}
\renewcommand\d\delta
\newcommand{\e}{\epsilon}
\renewcommand{\th}{\theta}
\newcommand{\s}{\sigma}
\renewcommand\t{\tau}
\newcommand{\vp}{\varpi}

\renewcommand{\l}{\lambda}
\renewcommand{\L}{\Lambda}
\newcommand{\om}{\omega}

\newcommand{\x}{\chi}
\newcommand{\y}{\eta}
\newcommand{\z}{\zeta}

\newcommand\dm{\diamondsuit}
\newcommand\hs{\heartsuit}

\newcommand\sh{\sharp}


\newcommand\ch{\textup{char}}
\newcommand\Wa{W_{\textup{aff}}}
\newcommand\Alc{\textup{Alc}}
\newcommand\Gmb{\Gm^{\flat}}
\newcommand\tGmb{\wt\GG^{\flat}_{m}}
\newcommand\Grot{\Gm^{\textup{rot}}}
\newcommand\Gdil{\Gm^{\textup{dil}}}
\newcommand{\pL}{pseudo-Levi }
\newcommand\Zg{\ZZ-\gr}


\title[$\ZZ/m\ZZ$-graded Lie algebras and perverse sheaves, III]{$\ZZ/m\ZZ$-graded Lie algebras and perverse sheaves, III: graded double affine Hecke algebra}

\author{George Lusztig}
\address{Department of Mathematics, MIT, 77 Massachusetts Ave, Cambridge, MA 02139}
\email{gyuri@math.mit.edu}
\author{Zhiwei Yun}
\thanks{G.L. is partially supported by the NSF grant DMS-1566618; Z.Y. is supported by the NSF grant DMS-1302071 and the Packard Foundation.}
\address{Department of Mathematics, Stanford University, 450 Serra Mall, Building 380, Stanford, CA 94305}
\email{zwyun@stanford.edu}
\date{}
\keywords{}

\begin{document}

\begin{abstract}
In this paper we construct representations of certain graded double affine Hecke algebras (DAHA) with possibly unequal parameters from geometry. More precisely, starting with a simple Lie algebra $\frg$ together with a $\ZZ/m\ZZ$-grading $\oplus_{i\in\ZZ/m\ZZ}\frg_{i}$ and a block of $\cD_{G_{\un0}}(\frg_{i})$ as introduced in \cite{LY1}, we attach a graded DAHA and construct its action on the direct sum of spiral inductions in that block. This generalizes results of Vasserot \cite{V} and Oblomkov-Yun \cite{OY} which correspond to the case of the principal block.
\end{abstract}

\maketitle

\setcounter{tocdepth}{1}
\tableofcontents

\section{Introduction}

\subsection{Background} 

\sss{} Let $G$ be a simply-connected almost simple group over an algebraically closed field $k$ with $\ch(k)=0$ . Let $\frg=\Lie G$. Let $m\ge 1$ be an integer and let 
\begin{equation*}
\frg=\bigoplus_{i\in\ZZ/m\ZZ}\frg_{i}
\end{equation*}
be a $\ZZ/m\ZZ$-grading on $\frg$.  For an integer $n\in\ZZ$, let $\un n$ be its image in $\ZZ/m\ZZ$.  

For the rest of the paper, we fix $\y\in\ZZ-\{0\}$. Let $G_{\un0}\subset G$ be the connected reductive subgroup with Lie algebra $\frg_{\un0}$. Let $\frg^{nil}_{\un\y}$ be the cone of nilpotent elements in $\frg_{\un\y}$. \footnote{In \cite{LY1,LY2} we use $\d$ to denote $\un\y$; in this paper we will reserve $\d$ for an imaginary root in an affine root system.} In the series of papers starting with \cite{LY1} and \cite{LY2}, we are interested in the structure of the equivariant derived category $\cD_{G_{\un0}}(\frg^{nil}_{\un\y})$. This paper mainly relies on \cite[\S1-4]{LY1} and \cite[\S10]{LY2}. We will use notation from \cite{LY1}, part of which will be reviewed in \S\ref{ss:notation}.

\sss{Block decomposition and admissible systems} The main result of \cite{LY1} is a direct sum decomposition of $\cD_{G_{\un0}}(\frg^{nil}_{\un\y})$ into blocks in the style of the generalized Springer correspondence:
\begin{equation}\label{bl}
\cD_{G_{\un0}}(\frg^{nil}_{\un\y})=\bigoplus_{\xi\in\un\frT_{\y}}\cD_{G_{\un0}}(\frg^{nil}_{\un\y})_{\xi}.
\end{equation}
The blocks are in bijection with the set $\un\frT_{\y}$ of $G_{\un0}$-conjugacy classes of {\em admissible systems}. Roughly speaking, an admissible system is a tuple $\dot\xi=(M,M_{0},\fkm, \fkm_{*},\cL)$, where $M$ is a reductive subgroup of $G$ which is the fixed point subgroup of a finite-order automorphism of $G$ (a \pL subgroup, see \S\ref{ss:pL}), $\fkm=\oplus_{n\in\ZZ}\fkm_{n}$ is a $\ZZ$-grading on the Lie algebra $\fkm$ of $M$ such that $\fkm_{n}\subset\frg_{\un n}$, $M_{0}$ is the connected reductive subgroup of $M$ with Lie algebra $\fkm_{0}$, and $\cL$ is an irreducible cuspidal $M_{0}$-equivariant local system (with $\Qlbar$-coefficients) on the open $M_{0}$-orbit $\mathring\fkm_{\y}$ of $\fkm_{\y}$.  For the precise definition of an admissible system, see \cite[\S3.1]{LY1} or \S\ref{sss:adm sys}.

There is a unique $G_{\un0}$-conjugacy class of admissible systems $(M,M_{0},\fkm, \fkm_{*},\cL)$ in which $M$ is a torus. In this case, $M$ is a maximal torus in $G_{\un0}$, the $\ZZ$-grading on $\fkm$ is concentrated in degree 0, and the local system $\cL$ is the skyscraper sheaf supported on $\fkm_{\y}=\{0\}$. Let $\xi_{0}\in\un\frT_{\y}$ be the $G_{\un0}$-conjugacy class of such admissible systems. We call $\cD_{G_{\un0}}(\frg^{nil}_{\un\y})_{\xi_{0}}$ the {\em principal block}. This block contains the intersection cohomology sheaves $\IC(\cO)$ of all $G_{\un0}$-orbits $\cO\subset\frg^{nil}_{\un\y}$ with constant local systems.

\sss{Spiral induction} Let $\e=\y/|\y|\in \{1,-1\}$ be the sign of $\y$. In \cite[\S2]{LY1} we defined the notion of $\e$-spirals for the $\ZZ/m\ZZ$-graded $\frg$. The definitions of spirals and its splittings will be recalled in \S\ref{ss:spiral}.

Fix a $G_{\un0}$-conjugacy class $\xi$ of admissible systems, and let $(M,M_{0},\fkm, \fkm_{*},\cL)\in\xi$.  Let $\frP^{\xi}$ be the set of $\e$-spirals $\frp_{*}$ of the $\ZZ/m\ZZ$-graded $\frg$ such that some (equivalently any) splitting of it is $G_{\un0}$-conjugate to $(M,M_{0},\fkm,\fkm_{*})$. For each $\frp_{*}\in \frP^{\xi}$ with splitting $(L,L_{0},\frl,\frl_{*})$ which is $G_{\un0}$-conjugate to $(M,M_{0},\fkm,\fkm_{*})$, there is a canonical $L_{0}$-equivariant cuspidal local system on $\mathring{\frl}_{\y}$ (the open $L_{0}$-orbit on $\frl_{\y}$) which corresponds to $\cL$ under any element $g\in G_{\un0}$ that conjugates $(L,L_{0},\fkm,\fkm_{*})$ to $(M,M_{0},\fkm,\fkm_{*})$ (see \S\ref{sss:sp ind}).  We denote this local system  on $\mathring{\frl}_{\y}$ still by $\cL$. We have the (unnormalized) spiral induction (see \S\ref{sss:sp ind})
\begin{equation*}
\bI_{\frp_{*}}={}^{\e}\Ind^{\frg_{\un\y}}_{\frp_{\y}}(\cL^{\sh})\in \cD_{G_{\un 0}}(\frg^{nil}_{\un\y}).
\end{equation*}

By definition, the full triangulated subcategory $\cD_{G_{\un 0}}(\frg^{nil}_{\un\y})_{\xi}\subset \cD_{G_{\un 0}}(\frg^{nil}_{\un\y})$ that appears in the decomposition \eqref{bl} is generated by those simple perverse sheaves that appear as direct summands (up to shifts) of $\bI_{\frp_{*}}$ for various $\frp_{*}\in \frP^{\xi}$.

The graded dimensions of $\Ext^{*}(\bI_{\frp_{*}}, \bI_{\frp'_{*}})$ are computed in \cite[Prop. 6.4]{LY1}. In this paper, we will exhibit a large symmetry on the direct sum of all spiral inductions $\bI_{\frp_{*}}$ belonging to the fixed block $\cD_{G_{\un 0}}(\frg^{nil}_{\un\y})_{\xi}$.

\subsection{Main results} 

\sss{}We fix a $G_{\un0}$-conjugacy class of admissible systems $\xi\in\un\frT_{\y}$ and let $\dot\xi=(M,M_{0},\fkm,\fkm_{*},\cL)$ be an admissible system in $\xi$.  Attached to $\xi$ we will introduce an affine Coxeter group $\Wa^{\xi}$ with simple reflections $I^{\xi}$ and a reflection representation on a $\QQ$-vector space $\EE^{\xi}_{\dm}$. We will also introduce a graded double affine Hecke algebra (DAHA) $\HH_{c}(\Wa^{\xi})$ with parameters $\{c_{i}\}_{i\in I^{\xi}}$. The underlying vector space of $\HH_{c}(\Wa^{\xi})$ is
$$
\HH_{c}(\Wa^{\xi})=\QQ[u]\otimes\Sym(\EE^{\xi,*}_{\dm})\otimes\QQ[\Wa^{\xi}].
$$
Finally, for $\nu\in\QQ$, let $\HH_{c,\nu}(\Wa^{\xi})$ be the specialization of $\HH_{c}(\Wa^{\xi})$ by sending $u\mapsto -\nu$ and $\EE^{\xi,*}_{\dm}\ni\d\mapsto 1$. For more details see \S\ref{ss:DAHA}.

The affine Dynkin graphs of $\Wa^{\xi}$ are exactly those appearing as the ``$\flat-\sharp$-diagrams'' in the tables of \cite[\S7]{L-unip} and \cite[\S11]{L-unip2}. The parameters $c$ that appear in the graded DAHA are also specified in {\em loc.cit.} as the first label of each node of the $\flat-\sharp$-diagrams. From these tables one can list exactly which graded DAHAs appear as our $\HH_{c}(\Wa^{\xi})$.

\sss{} Let $\un\frP^{\xi}=G_{\un0}\bs\frP^{\xi}$. It can be shown that $\bI_{\frp_{*}}$ depends only on the $G_{\un0}$-conjugacy class of the $\e$-spiral $\frp_{*}$ (see \S\ref{sss:sp class}).  Let $[\frp_{*}]\in\un\frP^{\xi}$, then $\bI_{[\frp_{*}]}$ is well-defined. The set $\un\frP^{\xi}$ admits a combinatorial description in terms of alcoves in an affine space $\EE$ modulo a finite group action (see Corollary \ref{c:xi facets}). In the notation of \cite[\S10]{LY2},  $\un\frP^{\xi}$ is in bijection with the $\cW$-orbits of alcoves in $\bE$ (alcoves are defined using the hyperplanes $\{\frH_{\a,n,N}\}$ introduced in {\em loc.cit.}; for precise statements, see \S\ref{sss:comp} and Lemma \ref{l:comp W}).

The main theorem of this paper is the following. 
\begin{theorem}\label{th:main} Let $\xi\in\un\frT_{\y}$. Consider the infinite direct sum of perverse sheaves in $\cD_{G_{\un0}}(\frg^{nil}_{\un\y})_{\xi}$
\begin{equation*}
\II_{\xi}:=\bigoplus_{[\frp_{*}]\in \un\frP^{\xi}}\pH \bI_{[\frp_{*}]}.
\end{equation*}
The notation $\pH(-)$ means the direct sum of perverse cohomology sheaves, see \S\ref{sss:pH}. Then there is a $\ZZ/2\ZZ$-graded action of $\HH_{c,\y/m}(\Wa^{\xi})$ on $\II_{\xi}$. 
\end{theorem}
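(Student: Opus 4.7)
The plan is to define the action of each of the three pieces of the graded DAHA $\HH_{c,\y/m}(\Wa^{\xi})$ separately, and then verify the defining relations. By Lemma \ref{l:comp W}, the summands of $\II_{\xi}$ are indexed by $\cW$-orbits of alcoves in $\bE$, and wall-crossing by a simple reflection $s_{i}$ ($i\in I^{\xi}$) in $\Wa^{\xi}$ provides a natural pairing of adjacent alcoves. The polynomial algebra $\Sym(\EE^{\xi,*}_{\dm})$ will act diagonally with respect to this decomposition, the simple reflections will act by wall-crossing intertwiners, and the specialization $u \mapsto -\y/m$ will be forced by compatibility with the rotation equivariance of the $\ZZ/m\ZZ$-grading.

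First I would construct the polynomial action. Each spiral induction $\bI_{\frp_{*}}$ is obtained as a $G_{\un0}$-equivariant pushforward, so $\End^{*}(\bI_{\frp_{*}})$ naturally receives a copy of $\upH^{*}_{G_{\un0}}(\pt)$; by \cite[Prop.~6.4]{LY1} the relevant sub-polynomial algebra $\Sym(\EE^{\xi,*}_{\dm})$ sits inside it, and extending by the rotation $\Gm$-equivariance of the grading introduces the central parameter $u$ that specializes to $-\y/m$. For the Weyl group action, if $\frp_{*}, \frp'_{*}\in \frP^{\xi}$ correspond to alcoves sharing a wall of type $i\in I^{\xi}$, then both are contained in a common $\e$-spiral $\frq_{*}$; by the transitivity of spiral induction in \cite[\S4]{LY1}, $\bI_{\frp_{*}}$ and $\bI_{\frp'_{*}}$ are realized as spiral inductions from a common intermediate datum supported on the rank-one subquotient attached to the wall. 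Tracking the cuspidal local system $\cL$ across this wall yields a canonical intertwiner $\bT_{i}\colon \pH \bI_{[\frp_{*}]} \to \pH \bI_{[\frp'_{*}]}$, and these assemble into the action of the generators $s_{i}$ of $\QQ[\Wa^{\xi}]$.

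The main obstacle will be the verification of the DAHA cross-relation $s_{i}\d - s_{i}(\d) s_{i} = c_{i}\langle\d,\alpha_{i}^{\vee}\rangle$ together with the quadratic relation $s_{i}^{2}=1$ on perverse cohomology. Both are local at the wall of type $i$ and reduce to a rank-one computation on the attached graded subquotient, essentially an $\SL_{2}$-type calculation twisted by $\cL$. In the principal block case the parameter $c_{i}$ equals $1$ and this is the content of \cite{V,OY}; in the general case the value of $c_{i}$ is precisely the first label of node $i$ in the $\flat-\sharp$-diagrams of \cite[\S7]{L-unip} and \cite[\S11]{L-unip2}, reflecting the perverse cohomological shift across the wall induced by the cuspidal datum. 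The braid relations between $s_{i}, s_{j}$ for distinct walls are automatic in the commuting case, and in the non-commuting case follow from a rank-two reduction to a \pL subgroup of semisimple rank two carrying a cuspidal datum on a graded subquotient, a small finite list that can be matched against the classification. Finally, the $\ZZ/2\ZZ$-grading is inherited from the parity of perverse degree: the wall-crossing intertwiners preserve parity while the generators in $\EE^{\xi,*}_{\dm}$ and $u$ each shift perverse degree by two, so all generators act by even operators and the claimed $\ZZ/2\ZZ$-graded structure follows.
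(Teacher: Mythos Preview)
Your outline shares the high-level architecture with the paper's proof --- build the action from parabolic subalgebras $\HH_{c,\y/m}(W_J)$ for proper $J\subset I^{\xi}$ and then glue --- but the execution you propose diverges from the paper at the two most important technical points, and in both places your version has a genuine gap.

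\textbf{How the paper actually proceeds.} For each proper $J$ and each facet $F\in\frF^{\xi}_J$, the paper does \emph{not} build $s_i$-intertwiners by hand and then check the quadratic, braid, and cross relations. Instead it invokes Lusztig's theorem from \cite{L-cusp1}: the full graded affine Hecke algebra $\HH_c(W_J)$ already acts on the \emph{ungraded} parabolic induction $\ind^{\frl}_{\fkm}(\cL^{\sh}_M)$ in $\cD_{L_{\dm}\times\Gdil}(\frl)$, with all relations (including braid relations and the cross relation with the correct parameters $c_i$) packaged into that single citation. The remaining work is to transport this action to the sum of spiral inductions $\II_{[F]}$. That transport is done by equivariant localization with respect to a specific one-dimensional torus $\Gmb\subset T^{\ad}\times\Grot\times\Gdil$ (\S\ref{sss:Gmb}): one forms $c_!d^*\ind^{\frl}_{\fkm}(\cL^{\sh}_M)\cong f_!p^*\cL^{\sh}_M$, computes the $\Gmb$-fixed locus of the relevant correspondence variety $\cX$ (Lemma \ref{l:X fix}), and applies the localization Lemma \ref{l:loc} to identify $\pH f_{0,!}\cK_0$ with $\II_{[F]}$ after specializing the equivariant parameter. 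The specialization $u\mapsto -\y/m$, $\d\mapsto 1$ is forced precisely by the embedding $\Gmb\hookrightarrow \Grot\times\Gdil$, $t\mapsto(t^{m/e},t^{-\y})$. Compatibility for $J'\subset J$ is Proposition \ref{p:WJ comp}, again inherited from \cite{L-cusp1}.

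\textbf{Where your proposal falls short.} First, your source for the polynomial action is wrong: $\Sym(\EE^{\xi,*}_{\dm})$ does not come from $\upH^*_{G_{\un0}}(\pt)$ but from $\upH^*_{L^{A,\ab}_{\dm}\times\Gdil}(\pt)$ acting on the cuspidal sheaf $\cC^{A,\sh}$ (see \S\ref{sss:poly}); Lemma \ref{l:E dm} is what identifies $\xcoch(L^{A,\ab}_{\dm})_\QQ$ with $\EE_{\dm}$. Second, and more seriously, the step ``tracking the cuspidal local system across the wall yields a canonical intertwiner $\bT_i$'' hides exactly the content you would need to supply: you are proposing to reconstruct by hand, in the $\ZZ$-graded subquotient $\frl^F_*$, the $W_J$-action that \cite{L-cusp1} provides in the ungraded setting. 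Your ``rank-one $\SL_2$-type calculation twisted by $\cL$'' for the cross relation and the ``rank-two reduction'' for braid relations are not small checks --- the parameter $c_i$ genuinely depends on the Jordan type of $\ad(e)$ on the nilradical (\S\ref{sss:ci}), and carrying this out would amount to redoing a substantial part of \cite{L-cusp1}. Without the localization mechanism you also have no way to pass from an action on $\ind^{\frl}_{\fkm}(\cL^{\sh}_M)$ (which lives over $\frl$, with $L_{\dm}\times\Gdil$-equivariance) to one on the $G_{\un0}$-equivariant objects $\pH\bI_{[A]}$ over $\frg_{\un\y}$; transitivity of spiral induction alone does not do this, because the intermediate induction is in the $\ZZ$-graded category, not the ungraded one where \cite{L-cusp1} applies.

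In short: the paper's proof is ``reduce to \cite{L-cusp1} via $\Gmb$-localization'', whereas your proposal is ``redo \cite{L-cusp1} in situ''. The latter is not impossible, but as written it is a sketch of a program rather than a proof, and it omits the localization step that is the actual bridge between the graded and ungraded pictures.
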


A $\ZZ/2\ZZ$-graded  action of $\HH_{c,\y/m}(\Wa^{\xi})$ on the infinite direct sum $\II_{\xi}$ means the following. For every element $h\in \HH_{c,\y/m}(\Wa^{\xi})$ and $[\frp_{*}],[\frp'_{*}]\in \un\frP^{\xi}$, there is a map of perverse sheaves $h_{[\frp_{*}],[\frp'_{*}]}: \pH \bI_{[\frp_{*}]}\to \pH \bI_{[\frp'_{*}]}$ preserving the $\ZZ/2\ZZ$-gradings such that
\begin{enumerate}
\item For fixed $[\frp_{*}]$, $h_{[\frp_{*}],[\frp'_{*}]}=0$ for all but finitely many $[\frp'_{*}]$.
\item The assignment $\HH_{c,\y/m}(\Wa^{\xi})\to \Hom(\pH \bI_{[\frp_{*}]},\pH \bI_{[\frp'_{*}]})$ given by $h\mapsto h_{[\frp_{*}],[\frp'_{*}]}$ is $\QQ$-linear.
\item Let $\one\in\HH_{c,\y/m}(\Wa^{\xi})$ be the identity element, then $\one_{[\frp_{*}],[\frp'_{*}]}$ is the identity if $[\frp_{*}]=[\frp'_{*}]$ and zero otherwise. 
\item For $h,h'\in\HH_{c,\y/m}(\Wa^{\xi})$ and $[\frp_{*}],[\frp'_{*}],[\frp''_{*}]\in  \un\frP^{\xi}$, we have $h'_{[\frp'_{*}],[\frp''_{*}]}\circ h_{[\frp_{*}],[\frp'_{*}]}=(h'h)_{[\frp_{*}],[\frp''_{*}]}$.
\end{enumerate}

\sss{} By the decomposition theorem, we know that each $\pH \bI_{[\frp_{*}]}$ is a semisimple perverse sheaf. For any simple perverse sheaf $\cS$ in $\cD_{G_{\un0}}(\frg^{nil}_{\un\y})_{\xi}$, let $[\pH \bI_{[\frp_{*}]}:\cS]=\Hom(\cS, \pH \bI_{[\frp_{*}]})$ be the multiplicity space of $\cS$ in $\pH \bI_{[\frp_{*}]}$. A priori $[\pH \bI_{[\frp_{*}]}:\cS]$ is $\ZZ/2\ZZ$-graded; however, by \cite[Theorem 14.10(d)]{LY2}, $[\pH \bI_{[\frp_{*}]}:\cS]$ is concentrated in  degrees of the same parity as $\dim\Supp(\cS)$.
 
\begin{cor}[of Theorem \ref{th:main}]\label{c:main} For any simple perverse sheaf $\cS$ in the block $\cD_{G_{\un0}}(\frg^{nil}_{\un\y})_{\xi}$, there is an action of $\HH_{c,\y/m}(\Wa^{\xi})$ on the vector space
\begin{equation}\label{H action mult}
[\II_{\xi}:\cS]: =\bigoplus_{[\frp_{*}]\in \un\frP^{\xi}}[\pH \bI_{[\frp_{*}]}:\cS].
\end{equation}
\end{cor}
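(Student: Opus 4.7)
The plan is to deduce Corollary~\ref{c:main} from Theorem~\ref{th:main} by a routine application of the functor $\Hom(\cS, -)$ on the block $\cD_{G_{\un0}}(\frg^{nil}_{\un\y})_{\xi}$. Since each $\pH \bI_{[\frp_{*}]}$ is a semisimple perverse sheaf (by the decomposition theorem, as recalled just before the statement of the corollary), the multiplicity space $[\pH \bI_{[\frp_{*}]}:\cS]$ is identified with $\Hom(\cS, \pH \bI_{[\frp_{*}]})$ and is finite-dimensional. Moreover, because $\cS$ is simple and the $\pH \bI_{[\frp_{*}]}$ are semisimple, the canonical map $\bigoplus_{[\frp_{*}]}\Hom(\cS,\pH \bI_{[\frp_{*}]}) \to \Hom(\cS,\II_{\xi})$ is an isomorphism, so that the notation $[\II_{\xi}:\cS]$ is unambiguous.

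For each $h \in \HH_{c,\y/m}(\Wa^{\xi})$ and each $[\frp_{*}] \in \un\frP^{\xi}$, Theorem~\ref{th:main} supplies matrix entries $h_{[\frp_{*}],[\frp'_{*}]}: \pH \bI_{[\frp_{*}]} \to \pH \bI_{[\frp'_{*}]}$ that vanish for all but finitely many $[\frp'_{*}]$. I would define the action on $f \in \Hom(\cS, \pH \bI_{[\frp_{*}]}) \subset [\II_{\xi}:\cS]$ by post-composition,
\[
h \cdot f \;:=\; \sum_{[\frp'_{*}] \in \un\frP^{\xi}} h_{[\frp_{*}],[\frp'_{*}]} \circ f \;\in\; \bigoplus_{[\frp'_{*}]} \Hom(\cS, \pH \bI_{[\frp'_{*}]}) \;=\; [\II_{\xi}:\cS],
\]
a finite sum by condition~(1) of Theorem~\ref{th:main}. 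Extending by linearity over the direct sum decomposition of $[\II_{\xi}:\cS]$ indexed by $[\frp_{*}]$ then produces a $\QQ$-linear endomorphism of $[\II_{\xi}:\cS]$ associated to each $h$.

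The algebra axioms follow directly from conditions (2)--(4) of Theorem~\ref{th:main}: $\QQ$-linearity in $h$ is immediate from~(2); the identity $\one$ acts as the identity on $[\II_{\xi}:\cS]$ by~(3); and multiplicativity $(h'h)\cdot f = h'\cdot(h\cdot f)$ reduces, after inserting the definition twice and summing over an intermediate index $[\frp'_{*}]$, to~(4). Each verification is a short formal calculation.

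Honestly, there is no serious obstacle in this corollary --- it is a formal consequence of Theorem~\ref{th:main}, and all real content sits in that theorem. The only point demanding any care is the bookkeeping around the infinite index set $\un\frP^{\xi}$: one must consistently invoke finiteness property~(1) to ensure that every sum arising (both in the definition of $h\cdot f$ and in the multiplicativity check) truly collapses to a finite one, so that the action takes values in the honest direct sum $[\II_{\xi}:\cS]$ rather than in a formal product.
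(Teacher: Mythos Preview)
Your proposal is correct and is precisely the natural way to extract the corollary from Theorem~\ref{th:main}; the paper in fact treats this statement as an immediate consequence and does not spell out a separate proof. Your handling of the only delicate point --- the infinite index set and the use of condition~(1) to ensure all sums are finite --- is exactly what is needed.
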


In the decomposition \eqref{H action mult}, each direct summand $[\pH \bI_{[\frp_{*}]}:\cS]$ is in fact a generalized eigenspace for the polynomial part of $\HH_{c,\y/m}(\Wa^{\xi})$, and the eigenvalue can be described explicitly as a point in $\EE^{\xi}$, see Proposition \ref{p:gen eigen}.

We make some conjectures about the $\HH_{c,\y/m}(\Wa^{\xi})$-modules we construct.

\begin{conj}\label{c:simple mod} In the notation of Corollary \ref{c:main}, $[\II_{\xi}:\cS]$ is a simple $\HH_{c,\y/m}(\Wa^{\xi})\otimes_{\QQ}\Qlbar$-module.
\end{conj}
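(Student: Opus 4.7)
\textbf{Proof plan for Conjecture \ref{c:simple mod}.}

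The plan is to reduce simplicity to a statement about the interaction between the weight decomposition of Corollary \ref{c:main} and the finite Weyl-type symmetries in $\HH_{c,\y/m}(\Wa^{\xi})$. First I would exploit Proposition \ref{p:gen eigen}: each multiplicity space $[\pH \bI_{[\frp_*]}:\cS]$ is a generalized eigenspace of the polynomial part $\Sym(\EE^{\xi,*}_\dm)$ with an explicitly computed weight $\lambda_{[\frp_*]}\in\EE^{\xi}$. The first task is to determine the set $\Lambda_\cS\subset\EE^{\xi}$ of weights appearing in $[\II_\xi:\cS]$, and to show that the map $[\frp_*]\mapsto \lambda_{[\frp_*]}$ restricted to those classes for which $[\pH \bI_{[\frp_*]}:\cS]\neq 0$ is related (by the combinatorial model of $\un\frP^\xi$ via alcoves in $\EE$ recalled in Corollary \ref{c:xi facets}) to a single $\Wa^\xi$-orbit of weights.

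Assuming this, I would next analyze intertwining (Demazure--Lusztig) operators. For each simple reflection $s_i$, $i\in I^\xi$, the generator $s_i$ of $\HH_{c,\y/m}(\Wa^\xi)$ produces, on weight vectors, the usual rational intertwiner attached to the affine wall $\frH_i$. Combining this with the explicit form of the generalized eigenvalues, one sees that $s_i$ restricted to $[\pH \bI_{[\frp_*]}:\cS]\oplus[\pH \bI_{[s_i\frp_*]}:\cS]$ can only fail to be an isomorphism when the weight $\lambda_{[\frp_*]}$ lies on (or is "close to" in a precise $c$-dependent sense) the wall $\frH_i$. Using Theorem \ref{th:main}(4) together with Proposition \ref{p:gen eigen} one checks that these obstructions are controlled by the cuspidal datum in $\dot\xi$ and match exactly the residues which parametrize real parameters $c_i$; consequently, away from the singular walls any two adjacent generalized eigenspaces are interchanged bijectively by the DAHA action.

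Having this, simplicity of $[\II_\xi:\cS]$ would follow by the standard "weight-walking" argument for graded affine Hecke or DAHA modules with full polynomial spectrum: any nonzero submodule $N\subset [\II_\xi:\cS]$ is stable under $\Sym(\EE^{\xi,*}_\dm)$, hence is a direct sum of its intersections with the generalized eigenspaces $[\pH \bI_{[\frp_*]}:\cS]$; any such nonzero intersection can be transported by a sequence of intertwiners $s_{i_1}\cdots s_{i_r}$ (avoiding singular walls by choosing minimal galleries) to a nonzero vector in every other weight space; and finally one must show that inside a single generalized eigenspace $[\pH \bI_{[\frp_*]}:\cS]$, the action of $\HH_{c,\y/m}(\Wa^\xi)$ is transitive, which after base change to $\Qlbar$ reduces to showing that the centralizer of the DAHA action is $\Qlbar$.

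The main obstacle, and the reason this is stated as a conjecture rather than a theorem, is the last step together with the non-vanishing of the intertwiners on the singular walls. Concretely, one needs a transversality/endomorphism input: the endomorphism algebra $\End(\cS)^{\op}$ sits inside $\End_{\HH}([\II_\xi:\cS])$ trivially, but to prove equality one would need either a Kazhdan--Lusztig-style double cell argument (matching the DAHA action with a convolution algebra action via the Springer-type correspondence of \cite{LY1}), or an exhaustion argument counting simple $\HH_{c,\y/m}(\Wa^\xi)$-modules with the prescribed central character and comparing with the set of simple perverse sheaves in the block classified in \cite{LY1, LY2}. I would pursue the latter route first, since the classifications on both sides are known: for the principal block this is precisely how Vasserot and Oblomkov--Yun conclude, and the key obstruction in our generality is to establish the analogue of their "geometric Satake"-style isomorphism between the full convolution algebra on $\II_\xi$ and $\HH_{c,\y/m}(\Wa^\xi)$, rather than merely an inclusion.
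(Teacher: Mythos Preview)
The paper does not prove this statement: it is explicitly labeled a \emph{Conjecture}, and the only evidence offered is the sentence immediately following it, namely that the case $\xi=\xi_{0}$ (the principal block) is known by Vasserot \cite{V}. There is therefore no ``paper's proof'' to compare your proposal against.

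Your outline is a reasonable research plan and you correctly identify the principal obstacles. A few remarks on where the genuine gaps lie. First, your ``weight-walking'' step presumes that the generalized eigenvalues $\jiao{x/m}_{[A]}$ for those $[A]$ with $[\pH\bI_{[A]}:\cS]\neq 0$ form a single $\Wa^{\xi}$-orbit; nothing in the paper establishes this, and in fact by Corollary \ref{c:xi facets} the indexing set $\un\frP^{\xi}$ is $W_{x/m}(\EE)\bs\Alc(\EE)$, so distinct classes $[A]$ can give the \emph{same} point $\jiao{x/m}_{[A]}\in\EE^{\xi}$, and the support condition on $\cS$ cuts this down further in a way that is not transparently an orbit. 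Second, the intertwiner argument requires nonvanishing of the Demazure--Lusztig operators precisely at the parameters $c_{i}$ and the slope $\y/m$ that occur here; this is exactly where graded DAHA representation theory is delicate, and the paper gives no tools to control it. Third, your endgame (matching the endomorphism algebra of $\II_{\xi}$ with the DAHA, or an exhaustion/counting argument) is the heart of the matter: Theorem \ref{th:main} produces only a homomorphism from $\HH_{c,\y/m}(\Wa^{\xi})$ into endomorphisms of $\II_{\xi}$, not an isomorphism onto the full self-Ext algebra, and without that surjectivity (or a substitute) simplicity cannot be concluded. This is consistent with the paper's own remark in \S1.2.5 that even the exhaustion statement (that all integrable simples arise this way) is only an expectation.
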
 
This conjecture is true when $\xi=\xi_{0}$ (corresponding to the principal block), by the result of Vasserot \cite{V}.

\sss{} Fix a homomorphism $\s:\mu_{e}\incl \Out(G)$,  a positive integer $m$ and a tuple $\dot\xi=(M,M_{0},\fkm,\fkm_{*},\cL)$ where $(M,M_{0},\fkm,\fkm_{*})$ is a graded \pL subgroup of $(G,\s)$ (see Definition \ref{defn:pL}), and $\cL$ is an $M_{0}$-equivariant cuspidal local system on $\mathring{\fkm}_{\y}$. Then the graded DAHA $\HH_{c,\y/m}(\Wa^{\xi})$ is defined independent of the choice of the $\ZZ/m\ZZ$-grading on $\frg$. An $\HH_{c,\y/m}(\Wa^{\xi})\otimes\Qlbar$-module $V$ is called {\em integrable} if the polynomial part $\Sym(\EE^{\xi,*}_{\dm})\otimes\Qlbar$ acts on $V$ locally finitely. 

For a $\ZZ/m\ZZ$-grading $\th$ on $\frg$ whose outer class is $\s$ (in the sense of \S\ref{sss:outer class}), the tuple $\dot\xi$ may or may not be an admissible system for $(G,\th)$. If it is,  $\II_{\xi}$ is defined. To emphasize its dependence on $\th$, we denote it by $\II_{\xi,\th}$. We expect that all integrable simple $\HH_{c,\y/m}(\Wa^{\xi})\otimes\Qlbar$-modules are of the form $[\II_{\xi,\th}:\cS]$ for various $\ZZ/m\ZZ$-gradings $\th$ on $\frg$ such that $\dot\xi$ is admissible with respect to $\th$ and various simple perverse sheaves $\cS$ in $\cD_{G_{\un0}}(\frg^{nil}_{\un\y})_{\xi}$.

\sss{} Mimicking the construction of standard modules for the graded affine Hecke algebra in \cite{L-cusp1}, one can define standard modules for the graded DAHA $\HH_{c,\y/m}(\Wa^{\xi})$ by taking stalks of $\II_{\xi}$. Now if Conjecture \ref{c:simple mod} holds, we would get a formula for the multiplicities of the simple modules in the standard modules in terms of the stalks of the simple perverse sheaves in the block $\cD_{G_{\un0}}(\frg^{nil}_{\un\y})_{\xi}$, which are computable algorithmically as shown in \cite{LY2}.

\subsection{Organization of the paper} 

In \S\ref{s:RWH}, the discussion is independent of the $\ZZ/m\ZZ$-grading on $\frg$. We introduce the notion of a \pL subgroup $M$ of $(G,\s)$ (where $\s:\mu_{e}\incl\Out(G)$) and describe its relation with building theory.  For such an $M$ that admits a cuspidal local system, with the extra choice of a facet in the building, we will introduce an affine Coxeter group and a graded DAHA with possibly unequal parameters.  In particular, the graded DAHA attached to $M$ is independent of the $\ZZ/m\ZZ$-grading on $\frg$.

In \S\ref{s:facets spiral}, the $\ZZ/m\ZZ$-grading on $\frg$ starts to play a role. We review the notion of spirals and describe them again in terms of building theory. In particular, we get a combinatorial description of $\un\frP^{\xi}$ in Corollary \ref{c:xi facets}. This section is closely related to \cite[\S10]{LY2}.

In \S\ref{s:pf} we prove Theorem \ref{th:main}. In \S\ref{ss:poly} we first construct the action of the polynomial part of $\HH_{c,\y/m}(\Wa^{\xi})$ on each $\bI_{\frp_{*}}$using Chern classes coming from equivariant parameters.  Then, in \S\ref{ss:par}, for each finite subgroup $W_{J}\subset \Wa^{\xi}$ generated by a proper subset $J$ of simple reflections, we construct actions of the corresponding subalgebra $\HH_{c,\y/m}(W_{J})$ on $\II_{\xi}$ (in fact on a finite direct sum of the $\pH\bI_{[\frp_{*}]}$). This construction is essentially the construction in \cite{L-cusp1}, which we review in \S\ref{ss:WJ recall}.  Equivariant localization with respect to a torus action is used in the passage from the results in \cite{L-cusp1} to our setting. Finally the action of $\HH_{c,\y/m}(\Wa^{\xi})$ is obtained by gluing the actions of $\HH_{c,\y/m}(W_{J})$ for various $J$ (the $W_{J}$'s generate $\Wa^{\xi}$).

\subsection{Variant and related work}

\sss{The $\ZZ$-graded case} The construction in this paper can be applied to the situation of a $\ZZ$-graded Lie algebra $\frg$. In this case, for each $G_{0}$-conjugacy class $\xi=(M,M_{0},\fkm,\fkm_{*},\cL)$ of admissible systems, we have a finite Coxeter group $W^{\xi}$ and a graded affine Hecke algebra $\HH_{c}(W^{\xi})$ as introduced in \cite{L-cusp1} for the cuspidal pair $(M,\cL)$ for $G$ (independent of the $\ZZ$-grading on $\frg$). There exists an action of $\HH_{c}(W^{\xi})/(u-1)$ on the direct sum of parabolic inductions from $(M,M_{0},\fkm,\fkm_{*},\cL)$. This is essentially done in \S\ref{ss:par}, and it follows directly from the results in \cite{L-cusp1} by equivariant localization.


\sss{Relation with loop algebras}
A natural way to study the $\ZZ/m\ZZ$-graded Lie algebra $\frg$ is to turn the $\ZZ/m\ZZ$-grading into a $\ZZ$-grading on its loop algebra $L\frg=\frg((z))$ (or its twisted form if $\s$ is nontrivial). Under such a translation,  spirals can be interpreted using parahoric subalgebras of $L\frg$, and spiral induction can be understood using parahoric induction. We discuss this relationship in \S\ref{sss:parahoric}, but otherwise we avoid mentioning loop algebras or parahoric induction in this paper in order to make our approach as elementary as possible. However, we are implicitly taking the loop algebra point of view: for example we make a choice of a lifting of the element $\ov x\in \xcoch(T^{\ad})\otimes\ZZ/m\ZZ$ giving rise to the $\ZZ/m\ZZ$-grading to an element $x\in \xcoch(T^{\ad})$ giving rise to a $\ZZ$-grading on $L\frg$. 

\sss{Relation with  \cite{V} and \cite{OY}} In \cite{V}, when the grading on $\frg$ is inner, a similar action of $\HH_{c}(\Wa)$ for the principal block $\cD_{G_{\un0}}(\frg^{nil}_{\un\y})_{\xi_{0}}$ was constructed from the point of view of the loop algebra. Moreover, \cite{V} gives a complete classification of integrable simple $\HH_{c,\y/m}(\Wa)$-modules: they are in bijection with simple perverse sheaves in the principal block $\cD_{G_{\un0}}(\frg^{nil}_{\un\y})_{\xi_{0}}$.

In \cite{OY}, an action of $\HH_{c}(\Wa)$ on the cohomology of homogeneous affine Springer fibers was constructed. Spirals did not explicitly appear in these works but the relevant varieties $X^{A}=G_{\un0}\twtimes{P^{A}_{0}}\frp^{A}_{\y}$ (Hessenberg varieties) showed up as torus fixed points on homogeneous  affine Springer fibers in the affine flag variety (see \cite[\S5.4]{OY}). The action of $\HH_{c}(\Wa)$ on the cohomology of homogeneous affine Springer fibers constructed in \cite{OY} is related to the one constructed in this paper via a Fourier transform (between $\cD_{G_{\un0}}(\frg_{\un\y})$ and $\cD_{G_{\un0}}(\frg_{-\un\y})$) and equivariant localization.

\subsection{Notation}\label{ss:notation}

\sss{} Throughout the paper, $k$ is an algebraically closed field of characteristic zero.  All schemes are schemes of finite type over $k$ in this paper unless otherwise claimed. 

\sss{} Let $G$ be an almost simple, simply-connected algebraic group over $k$. Let $\frg=\Lie G$. 

\sss{} Let $\ell$ be a prime different from $\ch(k)$. When we talk about complexes of sheaves on a scheme $X$, we always mean $\Qlbar$-complexes on $X$ for the \'etale topology. Let $\cD(X)$ be the bounded derived category of $\Qlbar$-complexes on $X$. If an algebraic group $H$ acts on $X$, let $\cD_{H}(X)$ be the derived category of $H$-equivariant $\Qlbar$-complexes on $X$ as developed in \cite{BL}. We will use the notion of complexes of sheaves on algebraic stacks $\frX$ but only for quotient stacks of the form $\frX=[X/H]$, in which case $\cD(\frX)=\cD_{H}(X)$ by definition. All sheaf-theoretic functors are derived.

\sss{} Let $X$ be a scheme. For a local system $\cL$ on some locally closed, equidimensional smooth subscheme $j: Y\incl X$, we let $\cL^{\sh}=j_{!*}(\cL[\dim Y])[-\dim Y]\in \cD(X)$. Note that $\cL^{\sh}|_{Y}=\cL$.

\sss{}\label{sss:pH} For a scheme $X$  of finite type over $k$ with an action of an algebraic group $H$, and any object $\cK\in \cD_{H}(X)$, we define the perverse sheaves
\begin{eqnarray*}
&&\pH^{\ev}\cK:=\oplus_{n \textup{ even}}\pH^{n}\cK; \quad
\pH^{\odd}\cK:=\oplus_{n \textup{ odd}}\pH^{n}\cK;\\
&&\pH\cK:=\pH^{\ev}\cK\oplus\pH^{\odd}\cK=\oplus_{n\in\ZZ}\pH^{n}\cK.
\end{eqnarray*}
We consider $\pH\cK$ as a $\ZZ/2\ZZ$-graded perverse sheaf.

\sss{} For a subalgebra $\frh\subset\frg$, we denote by $e^{\frh}$ the (smooth) connected subgroup of $G$ with Lie algebra $\frh$, whenever it exists.

\sss{}\label{sss:tw} For an algebraic group $H$ and a subgroup $H'$ of it which acts on a scheme $X$, we denote by $H\twtimes{H'}X$ the (stack) quotient of $H\times X$ under the $H'$-action given by $h'\cdot(h,x)=(hh'^{-1},h'x)$, $h'\in H', h\in H$ and $x\in X$. In the examples we consider, the quotient $H\twtimes{H'}X$ is always representable by a scheme.

\sss{} For a $\ZZ$-graded Lie algebra $\frh=\oplus_{n\in\ZZ}\frh_{n}$, we often abbreviate the collection of the graded pieces $\{\frh_{n}|n\in\ZZ\}$ by $\frh_{*}$.

\sss{} For a positive integer $n$, $\mu_{n}$ denotes the diagonalizable group over $k$ of $n$-th roots of unity.

\section{Relative affine Weyl group and graded DAHA}\label{s:RWH}
In this section, for a \pL subgroup $M$ of $(G,\s)$ which admits a cuspidal local system supported on a nilpotent orbit, we will introduce an affine Coxeter group and a graded DAHA with possibly unequal parameters. The results in this section are essentially contained in \cite{L-unip} and \cite{L-cusp1}.

\subsection{Affine root system}\label{ss:aff root} 

\sss{} We fix a pinning $E=(B_{0}, T_{0},\cdots)$ of $G$,  where $B_{0}$ is a Borel subgroup and $T_{0}$ a maximal torus of $G$ contained in $B_{0}$. We identify $\Out(G)$ with the pinned automorphism group of $G$. Fix an injective homomorphism
\begin{equation}\label{outer s}
\s: \mu_{e}\incl \Out(G).
\end{equation} 
Then $e\in\{1,2,3\}$. Let $\mu_{e}$ act on $G$ via pinned automorphisms through $\s$. Let $G^{\s}$ be the fixed point subgroup of $\s(\mu_{e})$.

\sss{}\label{sss:T} Let $T=T_{0}^{\s}$ be the fixed points of $\s(\mu_{e})$, then $T$ is a maximal torus of $G^{\s}$, and $T_{0}=C_{G}(T)$. Let $\frt=\Lie T$. Let $\Phi(G,T)\subset\xcoch(T)$ be the root system of $G$ with respect to $T$. Note that $\Phi(G,T)$ is not necessarily reduced, and it is non-reduced precisely when $G=\SL_{2n+1}$ and $e=2$. 

\sss{} Let $W_{G}$ be the Weyl group of $G$ with respect to $T_{0}$. Let $W$ be the Weyl group of the root system $\Phi(G,T)$. Then $W$ is also the Weyl group of $G^{\s}$ with respect to $T$. We have $W=W^{\s}_{G}$, the fixed point subgroup of $\s(\mu_{e})$.

\sss{} The pinned action of $\s(\mu_{e})$ gives a decomposition $\frg=\oplus_{i\in\ZZ/e\ZZ}\frg^{i}$. For each $\a\in \Phi(G,T)$ and $i\in\ZZ/e\ZZ$, the root space $\frg^{i}(\a)$ is either zero or one dimensional.  Let 
\begin{equation*}
\wt\Phi(G,T)=\{(\a,i)\in\Phi(G,T)\times\ZZ/e\ZZ|\frg^{i}(\a)\neq0\}.
\end{equation*}

\sss{}\label{sss:aff roots} Let $\fra=\xcoch(T)\otimes_{\ZZ}\QQ$. The Killing form on $\frg$ gives $\fra_{\RR}=\fra\otimes\RR$ a Euclidean structure. Let $\Phi_{\aff}$ be the set of affine functions $\wt\a=\a+\frac{n}{e}$ on $\fra$, for $(\a,n)\in \Phi(G,T)\times\ZZ$ such that $(\a, n\mod e)\in\wt\Phi(G, T)$. This is the set of real affine roots attached to the pair $(G,\s)$. The vanishing locus of each element $\wt\a\in\Phi_{\aff}$ gives an affine hyperplane $H_{\wt\a}\subset \fra$. Let $\frH$ be the collection of such affine hyperplanes. These hyperplanes give a stratification of $\fra$ into facets.  Let $\frF$ be the set of facets in $\fra$.

\begin{defn} A {\em relevant affine subspace} $\EE$ of $\fra$ is the affine subspace spanned by some facet $F$. Let $\frE$ be the set of relevant affine subspaces of $\fra$. 
\end{defn}

\sss{}\label{sss:WE} The affine Weyl group $\Wa$ attached to $(G,\s)$ is the group of affine isometries of $\fra$ generated by the orthogonal reflections across affine hyperplanes $H\in\frH$. For any facet $F\subset \fra$, let $W_{F}\subset\Wa$ be the stabilizer of the facet $F$ under $\Wa$. Then $W_{F}$ fixes $F$ pointwise, and it is a finite Weyl group generated by the reflections across those hyperplanes $H\in\frH$ that contain $F$. Note that $W_{F}$ only depends on the affine subspace spanned by $F$. Therefore, if $\EE\in\frE$, we may define $W_{\EE}$ to be $W_{F}$ for any facet $F$ that spans $\EE$.

\sss{}\label{sss:a dm} Let $\fra_{\dm}=\fra\oplus \QQ d$ with dual space $\fra^{\dm,*}=\QQ\d\oplus\fra^{*}$, such that $\jiao{\d,d}=1$. We think of $\d$ as the generator of the imaginary roots in $\Phi_{\aff}$. The affine action of $\Wa$ on $\fra$ extends canonically to a linear action of $\Wa$ on $\fra_{\dm}$: ${}^{w}(v+zd):=w(zv)+zd$, where $v\in \fra$, $z\in \QQ, w\in \Wa$ and $w(-)$ denotes the action of $w$ on $\fra$.

\sss{} Let $T^{\ad}=T/Z^{\s}_{G}$, where $Z^{\s}_{G}=Z_{G}\cap G^{\s}$ is also the center of $G^{\s}$. The embedding $\xcoch(T^{\ad})\subset \fra$ gives a lattice in $\fra$. We have
\begin{equation*}
\xcoch(T^{\ad})=\{\l\in\fra|\jiao{\a,\l}\in\ZZ, \forall\a\in\Phi(G,T)\}.
\end{equation*}
The extended affine Weyl group $\tilW$ is the group of affine isometries of $\fra$ that stabilize the stratification set $\frH$ of affine hyperplanes. The translation part of $\tilW$ is $\xcoch(T^{\ad})$ and $\tilW/\xcoch(T^{\ad})\cong W$.

\subsection{Pseudo-Levi subgroups}\label{ss:pL}

Let $\Aut_{\s}(G)\subset \Aut(G)$ be the preimage of $\s(\mu_{e})$ under the projection $\pi_{\Out}:\Aut(G)\to \Out(G)$. Let $\mu^{*}_{e}\subset \mu_{e}$ be the primitive $e$-th roots of unity (so $\mu^{*}_{1}=\mu_{1}=\{1\}$). Let $\Aut^{*}_{\s}(G)$ be the preimage of $\s(\mu^{*}_{e})$ under $\pi_{\Out}$. 

\begin{defn}\label{defn:pL} A {\em \pL subgroup} of $(G,\s)$ is the fixed point subgroup of $G$ of the form $M=G^{\t}$ for some semisimple automorphism $\tau\in \Aut^{*}_{\s}(G)$. 
\end{defn}
In the case $e=3$, we may restrict $\tau$ to be in a fixed non-neutral component of $\Aut^{*}_{\s}(G)$ (among the two choices), and they give the same notion of \pL subgroups, because inversion on $\Aut(G)$ interchanges the two choices. By Steinberg's theorem, a \pL subgroup $M$ is a connected reductive group.

\begin{lemma}\label{l:MT}
\begin{enumerate}
\item Any semisimple element $\tau\in\Aut^{*}_{\s}(G)=G^{\ad}\rtimes\s(\mu^{*}_{e})$ may be conjugated to $T^{\ad}\times\s(\mu^{*}_{e})$ under $G$. 
\item If $M=G^{\t}$ for some $\t\in T^{\ad}\times\s(\mu^{*}_{e})$, then $T$ is a maximal torus of $M$.
\end{enumerate}
\end{lemma}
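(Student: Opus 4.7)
The plan is to deduce (1) from Steinberg's theorem on semisimple automorphisms combined with a torus decomposition argument, and to derive (2) from the standard structure of fixed-point subgroups on a stable maximal torus.

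For (1), let $\t\in\Aut^{*}_{\s}(G)$ be semisimple. By Steinberg's theorem, every semisimple automorphism of a connected reductive group preserves some Borel--torus pair; conjugating $\t$ by a suitable element of $G$, I may assume $\t$ preserves $(B_{0},T_{0})$. Since the outer class of $\t$ equals $\s(\z)$ for some $\z\in\mu^{*}_{e}$ by hypothesis, this forces $\t=\Ad(t')\cdot\s(\z)$ for some $t'\in T_{0}^{\ad}$.

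The remaining step is to absorb $t'$ into $T^{\ad}$. Because $\s(\z)$ acts on $T_{0}^{\ad}$ with finite order $e$ and $\ch(k)=0$, the Lie algebra of $T_{0}^{\ad}$ decomposes into eigenspaces, yielding an almost-direct product $T_{0}^{\ad}=T_{0}^{\ad,\s(\z),\circ}\cdot T_{0}^{\ad,-}$, where on $T_{0}^{\ad,-}$ the operator $1-\s(\z)$ is invertible at the Lie-algebra level. Hence $h\mapsto h\,\s(\z)(h^{-1})$ is surjective on $T_{0}^{\ad,-}$. Conjugating $\t$ by $h\in T_{0}^{\ad}$ replaces $t'$ with $h\,\s(\z)(h^{-1})\,t'$; writing $t'=ab$ with $a\in T_{0}^{\ad,\s(\z),\circ}$ and $b\in T_{0}^{\ad,-}$, I choose $h$ with $h\,\s(\z)(h^{-1})=b^{-1}$ so that the new $t'$ equals $a\in T_{0}^{\ad,\s(\z),\circ}$. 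Finally, since $T^{\ad}\to T_{0}^{\ad}$ is injective (using $Z_{G}^{\s}=Z_{G}\cap T$, which holds because $Z_{G}\subset T_{0}$) and its image is a connected subgroup of $(T_{0}^{\ad})^{\s(\z)}$ of dimension $\dim T$, that image coincides with $T_{0}^{\ad,\s(\z),\circ}$. Thus the conjugated $\t$ lies in $T^{\ad}\cdot\s(\z)\subset T^{\ad}\times\s(\mu^{*}_{e})$.

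For (2), write $\t=t\,\s(\z)$ with $t\in T^{\ad}$ and $\z\in\mu^{*}_{e}$. Any lift of $t$ to $T\subset T_{0}$ acts trivially by conjugation on the abelian group $T_{0}$, so the action of $\t$ on $T_{0}$ agrees with that of $\s(\z)$. Consequently $T_{0}$ is $\t$-stable and $T_{0}^{\t}=T_{0}^{\s(\z)}=T$, the last equality using that $\z$ is a primitive $e$-th root of unity (so $\s(\z)$ generates $\s(\mu_{e})$) and that $T$ is connected (as in \S\ref{sss:T}). Applying the standard consequence of Steinberg's theorem that $(T_{0}^{\t})^{\circ}$ is a maximal torus of $G^{\t}$ whenever $T_{0}$ is a $\t$-stable maximal torus and $\t$ is semisimple, I conclude that $T=(T_{0}^{\t})^{\circ}$ is a maximal torus of $M=G^{\t}$.

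I expect the main bookkeeping obstacle to be the identification of the image of $T^{\ad}$ inside $T_{0}^{\ad}$ with the identity component $T_{0}^{\ad,\s(\z),\circ}$; the rest of the argument invokes standard results on semisimple automorphisms.
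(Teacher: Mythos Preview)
Your proof is correct and follows essentially the same route as the paper. For (1), both you and the paper reduce via Steinberg's theorem to $\tau=t'\sigma(\zeta)$ with $t'\in T_{0}^{\ad}$, then absorb $t'$ into $T^{\ad}$ by a conjugation in $T_{0}^{\ad}$; the paper phrases this last step as surjectivity of the composite $T^{\ad}\hookrightarrow T_{0}^{\ad}\twoheadrightarrow (T_{0}^{\ad})_{\sigma(\zeta)}$ onto coinvariants, while you use the dual formulation via the almost-direct product $T_{0}^{\ad}=T_{0}^{\ad,\sigma(\zeta),\circ}\cdot T_{0}^{\ad,-}$. These are equivalent.

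For (2), your argument is correct but slightly heavier than necessary. The paper avoids invoking Steinberg's theorem a second time: once you have $T_{0}^{\tau}=T$, simply compute $C_{M}(T)=C_{G}(T)\cap M=T_{0}\cap G^{\tau}=T_{0}^{\tau}=T$, which directly shows $T$ is its own centralizer in $M$ and hence a maximal torus there.
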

\begin{proof}
(1) Recall the fixed pinning $E=(B_{0}, T_{0},\cdots)$ of $G$ and $T=T^{\s}_{0}$. By \cite[7.5, 7.6]{St}, there exists a Borel subgroup $B_{1}$ of $G$ and a maximal torus $T_{1}$ of $G$ with $T_{1}\subset B_{1}$ such that $\tau$ normalizes $B_{1}$ and $T_{1}$. Let $g\in G^{\ad}$ be such that $\Ad(g)(B_{1},T_{1})=(B_{0},T_{0})$, then $g\t g^{-1}$ normalizes $(B_{0},T_{0})$. Replacing $\t$ by $g\t g^{-1}$, we may assume that $\t$ normalizes $(B_{0},T_{0})$. Then there is an element $s\in T^{\ad}_{0}$ such that $s^{-1}\t$ fixed the pinning $E$, i.e., $s^{-1}\t=\s(\z)$, where $\z\in\mu^{*}_{e}$ such that $\s(\z)$ is the image of $\t$ in $\Out(G)$. We then have $\t=s\s(\z)$ for some $s\in T^{\ad}_{0}$. Let $T^{\ad}_{0,\s(\z)}$ be the coinvariants of the $\s(\z)$-action on $T^{\ad}_{0}$, then the natural map $T^{\ad}=T^{\ad, \s(\z)}_{0}\incl T^{\ad}_{0}\surj T^{\ad}_{0,\s(\z)}$ is surjective. Let $s'\in T^{\ad}$ have the same image as $s$ in $T^{\ad}_{0,\s(\z)}$, then there exists $s_{1}\in T^{\ad}_{0}$ such that $s's^{-1}=s_{1}\cdot{}^{\s(\z)}s^{-1}_{1}$ (we denote the action of $\s(\z)$ on $G^{\ad}$ by $g\mapsto {}^{\s(\z)}g$). Then we have $s_{1}\t s^{-1}_{1}=s_{1}s\s(\z)s^{-1}_{1}=s_{1}\cdot{}^{\s(\z)}s^{-1}_{1}\cdot s\s(\z)=s'\s(\z)\in T^{\ad}\times\s(\mu^{*}_{e})$, as desired. 

(2) We have $C_{M}(T)=C_{G}(T)\cap M=T_{0}\cap M=T_{0}^{\t}$. Since $\t=s\s(\z)$ for some $s\in T^{\ad}$ and $\z\in\mu^{*}_{e}$, we have $T^{\t}_{0}=T^{\s(\z)}_{0}=T^{\s}_{0}=T$, therefore $C_{M}(T)=T$ and $T$ is a maximal torus in $M$.
\end{proof}

\sss{}  Let $\frM$ be the set of \pL subgroups of $(G,\s)$. Let $\un\frM=G\bs\frM$ be the $G$-conjugacy classes of \pL subgroups.  Let $\frM_{T}$ be the set of \pL subgroups of $(G,\s)$ of the form $G^{\t}$ for some $\t\in T^{\ad}\times\s(\mu^{*}_{e})$. Then $\frM_{T}\to \un\frM$ is surjective by Lemma \ref{l:MT}(1). The Weyl group $W$ acts on $\frM_{T}$ by conjugation. We have the following more precise statement.

\sss{Root system of a \pL}When $M\in \frM_{T}$, we have the root system $\Phi(M,T)$. For each $\a\in\Phi(M,T)$, the one-dimensional root space $\fkm(\a)$ lies in some $\frg^{i}$ for a unique $i\in\ZZ/e\ZZ$ (for if $M$ is the centralizer of $(t,\s(\z_{e}))\in T^{\ad}\times\s(\mu_{e})$, then $\fkm(\a)\neq0$ if and only if $\a(t)\s(\z_{e})$ acts on $\frg$ with eigenvalue $1$, in which case there is a unique $i\in\ZZ/e\ZZ$ such that $\a(t)\in \z^{-i}_{e}$ and $\frg^{i}(\a)=\fkm(\a)\neq0$). Therefore each $\a\in\Phi(M,T)$ determines some $i\in\ZZ/e\ZZ$. In other words the inclusion $\Phi(M,T)\subset \Phi(G,T)$ can be canonically lifted to an injective map $\Phi(M,T)\incl \wt\Phi(G,T)\subset\Phi(G,T)\times\ZZ/e\ZZ$, and we denote its image by $\wt\Phi(M,T)\subset\wt\Phi(G,T)$. 

\sss{Relevant affine subspaces and \pL subgroups}\label{sss:My}  Let $\EE\in\frE$ be a relevant affine subspace of $\fra$.  Let $R_{\EE}\subset \wt\Phi(G,T)$ be the set of pairs $(\a,i)$ such that $\jiao{\a,y}+i/e\in\ZZ$ for all $y\in\EE$. If $(\a,i)\in R_{\EE}$, then $\a$ determined $i\in\ZZ/e\ZZ$, hence the projection $R_{\EE}\to \Phi(G,T)$ is injective.  In particular, $R_{\EE}$ is a reduced root system. There is a unique reductive subgroup $G_{\EE}\subset G$ containing $T$ as a maximal torus with Lie algebra
\begin{equation*}
\frg_{\EE}=\frt\oplus (\bigoplus_{(\a,i)\in R_{\EE}}\frg^{i}(\a))
\end{equation*}
In other words $\wt\Phi(G_{\EE},T)=R_{\EE}$. 

We claim that $G_{\EE}\in\frM_{T}$.  In fact,  we pick any $y\in \EE$ that does not lie in any facet with dimension strictly smaller than that of $\EE$, and write $y=\l/n$ for some $\l\in\xcoch(T)$ and $n\in e\ZZ_{>0}$. Then $R_{\EE}=\{(\a,i)\in\wt\Phi(G,T)|\jiao{\a,y}+i/e\in\ZZ\}$. Let $\t=\l(\z_{n})\cdot\s(\z^{n/e}_{n})\in T\times \s(\mu^{*}_{e})$ for some primitive $n$-th root of unity $\z_{n}$.  Then one can check by examining root spaces that $G_{\EE}=G^{\t}$, and therefore $G_{\EE}\in \frM_{T}$. This way we have defined a map
\begin{equation}\label{E to M}
\Gamma: \frE\to \frM_{T}
\end{equation}
given by $\EE\mapsto G_{\EE}$.  Since all elements $\t\in T\times \s(\mu^{*}_{e})$ can be obtained from an element $y=\l/n\in \fra$ as above,  $\Gamma$ is surjective. It is also easy to see that $\Gamma$ is $\xcoch(T^{\ad})$-invariant with respect to its translation action on $\frE$.

\sss{}\label{sss:Gy} Suppose $y\in\fra$, then $y$ lies in a unique facet $F$ which spans a relevant affine subspace $\EE$. We define $G_{y}$ and $G_{F}$ to be $G_{\EE}$.  We denote the Lie algebra of $G_{y}$ and $G_{F}$ by $\frg_{y}$ and $\frg_{F}$. Consider the composition
\begin{equation*}
\un\Gamma_{\frF}: \frF\to \frE\xr{\Gamma} \frM_{T}\to \un\frM.
\end{equation*}
sending a facet $F$ to $G_{\EE}$ where $\EE=\Span(F)$.  For $[M]\in \un\frM$ (the $G$-conjugacy class of a \pL $M$), let $\frF^{[M]}\subset\frF$ denote the preimage of $[M]$ under the map $\un\Gamma_{\frF}$.

\sss{} Let $\EE$ be a relevant affine subspace of $\fra$, and let $M=G_{\EE}$ be the corresponding \pL subgroup. Let $Z_{M}$ be the center of $M$, $Z^{0}_{M}$ be the neutral component of $Z_{M}$. Let $\fra^{M}=\xcoch(Z^{0}_{M})\otimes_{\ZZ}\QQ\subset\fra$. Then $\EE$ is torsor under $\fra^{M}$.

Let $\fra_{M}=\xcoch(T/Z_{M})\otimes_{\ZZ}\QQ=\fra/\fra^{M}$. Let $\pi_{M}: \fra\to \fra_{M}$ be the natural projection.  

The subgroup $W_{\EE}$ of $\Wa$ fixes $\EE$ pointwise and induces an affine action on $\fra_{M}$ fixing the point $\pi_{M}(\EE)$. Shifting by $-\pi_{M}(\EE)$, we may identify $W_{\EE}$ with the Weyl group of $M$ with respect to $T$, which acts linearly on $\fra_{M}$.

\sss{} The Killing form on $\fra$ restricts to a Euclidean structure on $\EE_{\RR}=\EE\otimes\RR$. For each $H\in\frH$, the intersection $H\cap \EE$ is either empty, or the whole $\EE$, or a hyperplane in $\EE$. Let $\frH(\EE)$ be the set of  hyperplanes in $\EE$ of the form $H\cap \EE$, where $H\in\frH$. These hyperplanes give a stratification of $\EE$ into {\em $\EE$-facets}. This stratification is simply the restriction to $\EE$ of the stratification of $\fra$ by facets, and an $\EE$-facet is simply a facet of $\fra$ that is contained in $\EE$. 
Let $\frF(\EE)$ denote the set of $\EE$-facets.  We call the open $\EE$-facets {\em $\EE$-alcoves}. These are in natural bijection with connected components of $\EE_{\RR}'=\EE_{\RR}-\cup_{H\in\frH(\EE)}H_{\RR}$. The set of $\EE$-alcoves is denoted $\Alc(\EE)$.

\subsection{Action of $\Grot$}\label{ss:Grot} 

\sss{} Let $\Grot$ be a one-dimensional torus.
For each relevant affine subspace $\EE\subset\fra$, we define an action of $\Grot$ on the corresponding \pL subgroup $G_{\EE}$ as follows. It acts trivially on $T$. For each $(\a,i)\in R_{\EE}$, we let $\Grot$ act on the root space $\frg^{i}(\a)$ of $\frg_{\EE}$ with weight $-e\jiao{\a,y}$, for any $y\in \EE$ (note here that $-e\jiao{\a,y}\in\ZZ$ is independent of the choice of $y\in \EE$). 

If $H$ is a subgroup of $G_{\EE}$ stable under the action of $\Grot$, we denote $H_{\dm}=H\rtimes\Grot$.

\sss{} We denote the canonical generator of $\xcoch(\Grot)$ by $ed$; dually the canonical generator of $\xch(\Grot)$ is $\d/e$. We then identify $\fra_{\dm}=\fra\oplus \QQ d$ (see \S\ref{sss:a dm}) with $\xcoch(T_{\dm})_{\QQ}$, where $T_{\dm}=T\times \Grot$.

\sss{} For a relevant affine subspace $\EE\subset\fra$, let $\EE_{\dm}$ be the $\QQ$-linear span of the affine subspace $\EE+d\subset \fra_{\dm}$. Let $M=G_{\EE}$. Then $\d\in\fra_{\dm}^{*}$ restricts to a linear function $\d: \EE_{\dm}\to\QQ$, and we have
$$\EE=\d^{-1}(1); \quad \fra^{M}=\d^{-1}(0).$$

The center $Z_{M_{\dm}}$ of $M_{\dm}=M\rtimes\Grot$ fits into an exact sequence 
$$1\to Z_{M}\to Z_{M_{\dm}}\to \Grot\to1.$$ 
The neutral component $Z^{0}_{M_{\dm}}$ is a subtorus of $T_{\dm}=T\times \Grot$. Therefore $\xcoch(Z^{0}_{M_{\dm}})_{\QQ}$ is a linear subspace of $\xcoch(T_{\dm})_{\QQ}=\fra_{\dm}$.  The following lemma is a direct calculation.

\begin{lemma}\label{l:E dm} We have $\xcoch(Z^{0}_{M_{\dm}})_{\QQ}=\EE_{\dm}$ as subspaces of $\fra_{\dm}$.
\end{lemma}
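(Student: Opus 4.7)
The plan is to identify $\xcoch(Z^{0}_{M_{\dm}})_{\QQ}$ inside $\fra_{\dm}=\xcoch(T_{\dm})_{\QQ}$ as the common zero locus of all roots of $M_{\dm}$ with respect to $T_{\dm}$, and then to match this with $\EE_{\dm}$ by an explicit affine decomposition. The only subtle point is keeping track of the $\Grot$-weights, but these have been pinned down in \S\ref{ss:Grot}.

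First, I would determine the roots of $M_{\dm}$ with respect to $T_{\dm}=T\times\Grot$. Since $T$ and $\Grot$ commute in $M_{\dm}$, and $\Lie M_{\dm}=\fkm\oplus\Lie\Grot$, every root space of $M_{\dm}$ arises from some $\frg^{i}(\a)$ with $(\a,i)\in R_{\EE}$. The $T$-weight of $\frg^{i}(\a)$ is $\a$ and, by definition of the $\Grot$-action, its $\Grot$-weight is $-e\jiao{\a,y}$ for any $y\in\EE$; this integer is indeed independent of $y$ since $\a$ vanishes on the linear direction $\fra^{M}=\xcoch(Z^{0}_{M})_{\QQ}$ of $\EE$. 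Using that the canonical generator of $\xch(\Grot)$ is $\d/e$, the corresponding root of $M_{\dm}$ is
\begin{equation*}
\a_{\dm}:=\a-\jiao{\a,y}\d\in\fra^{*}_{\dm},\qquad y\in\EE.
\end{equation*}

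Next, fix any $y\in\EE$ and write a general element of $\fra_{\dm}$ as $v=v'+zd$ with $v'\in\fra$, $z\in\QQ$. A straightforward computation gives
\begin{equation*}
\jiao{\a_{\dm},v}=\jiao{\a,v'}-z\jiao{\a,y}=\jiao{\a,v'-zy}.
\end{equation*}
Hence $v$ pairs to zero with every $\a_{\dm}$ ($\a\in\Phi(M,T)$) if and only if $v'-zy\in\fra^{M}$, the orthogonal of $\Phi(M,T)$ in $\fra$. Equivalently, $v=z(y+d)+(v'-zy)\in\QQ\cdot(y+d)+\fra^{M}$.

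Finally, I would check that $\QQ\cdot(y+d)+\fra^{M}$ agrees with $\EE_{\dm}$: since $\EE=y+\fra^{M}$, the affine subset $\EE+d$ equals $(y+d)+\fra^{M}$, and its $\QQ$-linear span is precisely $\QQ\cdot(y+d)+\fra^{M}$ (any linear combination $\sum c_{i}(y+w_{i}+d)$ equals $c(y+d)+w$ with $c=\sum c_{i}$ and $w=\sum c_{i}w_{i}\in\fra^{M}$, and conversely $c(y+d)+w=(c-1)(y+d)+(y+d+w)$). As a sanity check on dimensions, both sides have dimension $\dim\fra^{M}+1=\dim Z^{0}_{M}+1$. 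This gives the claimed equality $\xcoch(Z^{0}_{M_{\dm}})_{\QQ}=\EE_{\dm}$. The only real step is the weight computation in the first paragraph; everything after is linear algebra.
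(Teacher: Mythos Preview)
Your proof is correct and is precisely the kind of direct calculation the paper alludes to (the paper itself omits the proof, stating only that it is ``a direct calculation''). The identification of $\xcoch(Z^{0}_{M_{\dm}})_{\QQ}$ with the common kernel of the roots $\a_{\dm}=\a-\jiao{\a,y}\d$ and the subsequent linear-algebra verification that this kernel equals $\QQ(y+d)+\fra^{M}=\EE_{\dm}$ are both accurate.
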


\subsection{Relative affine Weyl group}\label{ss:Rel Wa} 
\sss{}\label{} We fix a $G$-conjugacy class $[M]$ of \pL subgroups of $(G,\s)$, i.e., $[M]\in\un\frM$. We assume that $\fkm=\Lie M$ carries an $M$-equivariant cuspidal local system $\cL$ supported on some nilpotent orbit $\cO$. This means that the perverse sheaf $\cL^{\sh}[\dim \cO]$ is cuspidal for the semisimple Lie algebra $\fkm^{\der}=[\fkm,\fkm]$ in the sense of \cite[2(a)]{L-Four}. By the classification of cuspidal local systems in \cite{L-IC}, such a nilpotent orbit $\cO$, if exists, is unique (i.e., independent of the cuspidal local system $\cL$).

The possible conjugacy classes of \pL subgroups $M$ supporting cuspidal local systems on an nilpotent orbit are classified in \cite[\S7]{L-unip} in the case $e=1$ and in \cite[\S11]{L-unip2} in the case $e=2$ or $3$. If we choose a standard alcove $A_{0}$ in $\fra$, one can find a standard facet $F\subset \ov A_{0}$ such that $F\in\frF^{[M]}$ (which may not be unique). Then $F$ corresponds to a subdiagram of the (twisted) extended Dynkin diagram of $(G,\s)$. In {\em loc.cit.}, all such subdiagrams arising from $M$ supporting cuspidal local systems are shown in the boxes inside the ambient affine Dynkin diagrams.

\sss{} Let $A\in\frF^{[M]}$ and $\EE=\Span(A)$. Recall this means that $G_{\EE}$ is in the $G$-conjugacy class $[M]$. For notational convenience we may assume $G_{\EE}=M$.

For each $H\in \frH(\EE)$, there is a unique orthogonal reflection $r_{H}$ across $H$. Let $\Wa(\EE)$ be the group of affine isometries of $\EE$ generated by the reflections $r_{H}$ for all $H\in\frH(\EE)$. Let $I(A)$ be the set of hyperplanes $H\in\frH(\EE)$ spanned by codimension one facets on the boundary of $A$.

\begin{prop}[\cite{L-unip}]\label{p:rel Wa}
\begin{enumerate}
\item The group $\Wa(\EE)$ acts simply transitively on  $\Alc(\EE)$. 
\item If $\dim \EE>0$, then $\Wa(\EE)$ is an affine Coxeter group with simple reflections $\{r_{H}\}_{H\in I(A)}$.
\end{enumerate}
\end{prop}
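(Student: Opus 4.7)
The plan is to deduce both parts from the general theory of reflection groups applied to the locally finite hyperplane arrangement $\frH(\EE)$ on the Euclidean space $\EE_{\RR}$, together with the classification of cuspidal local systems from \cite{L-unip} and \cite{L-unip2} to identify the resulting Coxeter group as affine.

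First I would observe that $\frH(\EE)$ is locally finite on $\EE_{\RR}$, since $\frH$ is locally finite on $\fra_{\RR}$. Hence the $\EE$-alcoves $\Alc(\EE)$ are well defined as the connected components of the complement of $\bigcup_{H \in \frH(\EE)} H_{\RR}$, and $\Wa(\EE)$ preserves $\frH(\EE)$ and thus permutes $\Alc(\EE)$.

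For part (1), I would let $W^{0} \subset \Wa(\EE)$ be the subgroup generated by $\{r_{H}\}_{H \in I(A)}$ and show that $W^{0}$ already acts simply transitively on $\Alc(\EE)$. Transitivity is a standard gallery argument: to move from $A$ to a neighbouring alcove one applies a single wall reflection $r_{H}$ with $H \in I(A)$, and one iterates across galleries. Simple transitivity follows from the fact that the stabilizer of an open chamber in any reflection group attached to a locally finite arrangement is trivial. To conclude that $W^{0} = \Wa(\EE)$, for any $H \in \frH(\EE)$ pick $w \in W^{0}$ sending $A$ to an alcove with $H$ among its walls; then $H = w(H_{0})$ for some $H_{0} \in I(A)$, and $r_{H} = w r_{H_{0}} w^{-1} \in W^{0}$.

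For part (2), the simply transitive action combined with the standard Tits--Bourbaki argument yields a Coxeter presentation of $\Wa(\EE)$ with simple reflections $\{r_{H}\}_{H \in I(A)}$. It remains to verify that this Coxeter group is genuinely affine, equivalently that $\Wa(\EE)$ contains a lattice of translations of rank $\dim \EE$ in $\EE_{\RR}$. This is where the hypothesis that $M = G_{\EE}$ admits a cuspidal local system is essential: by the classification in \cite{L-unip} and \cite{L-unip2}, each such conjugacy class $[M]$ gives rise to one of the explicit affine Dynkin diagrams (the $\flat$-$\sharp$-diagrams) tabulated there, and the pair $(\frH(\EE), \Wa(\EE))$ recovers the walls and affine Weyl group of that diagram. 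The main obstacle is precisely this last identification, which seems to require case-by-case inspection through the classification rather than a uniform conceptual argument, since cuspidality is itself a classification-driven property.
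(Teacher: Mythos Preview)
Your strategy is close to the paper's in that both ultimately rest on the classification from \cite{L-unip} and \cite{L-unip2}. However, there is a genuine gap: you assert without proof that ``$\Wa(\EE)$ preserves $\frH(\EE)$ and thus permutes $\Alc(\EE)$.'' This is the crucial point, and it is not automatic. The reflections $r_H$ are defined purely as orthogonal isometries of $\EE$ with respect to the metric induced from the Killing form on $\fra$; there is no a priori reason they should send a hyperplane of the form $K\cap\EE$ (with $K\in\frH$) to another hyperplane of the same form. Concretely, $r_H$ lifts to an element of $\Wa$ stabilising $\EE$ exactly when the corank-one parabolic $W_\EE\subset W_H$ has normaliser of index $2$ in $W_H$; but maximal parabolics in finite Weyl groups are often self-normalising (already in type $A$), so for a general relevant $\EE$ this can fail. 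Without this invariance the Bourbaki machinery you invoke (gallery argument, trivial chamber stabiliser, Tits presentation) does not even get started, because $\Wa(\EE)$ need not act on $\Alc(\EE)$.

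This is exactly what the paper's proof handles through the notion of an \emph{excellent} subset from \cite[\S2.4]{L-unip}. The proof embeds $I(A)$ into a simple system $I$ for $\Wa$, then verifies via the classification of cuspidal local systems (\cite[\S5.6]{L-unip} for $e=1$, and case-by-case from the tables in \cite[\S11]{L-unip2} for $e>1$) that $I(A)$ is excellent; both the simply transitive action and the affine Coxeter structure then follow at once from the general results \cite[\S2.9, \S2.11(a)]{L-unip} about excellent subsets. So the classification is doing more work than you allow for: it is needed not only to pin down the affine type at the very end, but already to guarantee that $\Wa(\EE)$ acts on $\Alc(\EE)$ in the first place.
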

\begin{proof}
Let $I$ be any system of simple generators of $\Wa$ containing $I(A)$. By \cite[\S5.6]{L-unip} (which uses the classification of cuspidal local systems given in \cite{L-cusp1}), in the case $e=1$, $I(A)$ is an {\em excellent} subset of $I$ in the sense of \cite[\S2.4]{L-unip}. In the case $e=2$ or $3$, $I(A)$ is again an excellent subset of $I$. This follows by examining the tables in \cite[\S11]{L-unip2} case by case.  Then the desired statements follow from \cite[\S2.9, \S2.11(a)]{L-unip}.
\end{proof}

\sss{} For $A\in\frF^{[M]}$ which spans $\EE$, let
\begin{equation*}
\Wa^{A}:=(\Wa(\EE), I(A))
\end{equation*}
be the resulting Coxeter group.

For each $H\in I(A)$, we have its stabilizer $W_{H}$ under $\Wa$. For a pair $H\neq H'\in I(A)$, the intersection $H\cap H'$ is also a relevant affine subspace, hence $W_{H\cap H'}\subset \Wa$ is also defined. For any of the finite Weyl groups $W_{\EE}, W_{H}$ and $W_{H\cap H'}$, under any system of Coxeter generators, the length of the longest element is well-defined, and we denote it by $\ell(W_{\EE}), \ell(W_{H})$ and $\ell(W_{H\cap H'})$.

By \cite[\S2.28(a)]{L-unip}, for $H,H'\in I(A)$, the order $m_{H,H'}$ of $r_{H}r_{H'}$ in $\Wa(\EE)$ is given by 
\begin{equation*}
m_{H,H'}=\frac{2(\ell(W_{H\cap H'})-\ell(W_{\EE}))}{\ell(W_{H})+\ell(W_{H'})-2\ell(W_{\EE})}
\end{equation*}

Let $N_{\Wa}(W_{\EE})$ be the normalizer of $W_{\EE}$ in $\Wa$. Since $W_{\EE}$ is the pointwise stabilizer of $\EE$ and $\EE$ is the fixed point locus of $W_{\EE}$, $N_{\Wa}(W_{\EE})$ is also the stabilizer of $\EE$ under $\Wa$. The quotient $N_{\Wa}(W_{\EE})/W_{\EE}$ acts on $\EE$ by affine isometries.

\begin{prop}[{\cite[Lemma 2.29(a)]{L-unip}}]\label{p:NW} The natural homomorphism $N_{\Wa}(W_{\EE})/W_{\EE}\to\Aut(\EE)$ is injective, and its image is $\Wa(\EE)$. In particular, we have a canonical isomorphism 
$$N_{\Wa}(W_{\EE})/W_{\EE}\cong \Wa(\EE).$$
\end{prop}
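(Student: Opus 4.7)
The plan is to prove the proposition in three steps: set up the map and check injectivity; lift each simple reflection of $\Wa(\EE)$ to the normalizer; then use alcove transitivity to rule out anything extra.

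First, I would observe that $W_\EE$ is precisely the pointwise stabilizer of $\EE$ in $\Wa$: by \S\ref{sss:WE} it fixes $\EE$ pointwise, and conversely any $w\in\Wa$ fixing $\EE$ pointwise stabilizes the facet spanning $\EE$ setwise, hence lies in $W_\EE$. Since $\EE$ is then the fixed-point set of $W_\EE$ in $\fra$, for $w\in N_\Wa(W_\EE)$ the conjugation identity $wW_\EE w^{-1}=W_\EE$ forces $w(\EE)=\EE$, giving a well-defined restriction homomorphism $N_\Wa(W_\EE)\to\Aut(\EE)$. Its kernel is exactly $W_\EE$ (the elements acting trivially on $\EE$), which gives injectivity.

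Next, I would show that each simple reflection $r_H$ for $H\in I(A)$ lies in the image. Since $H\subset\EE$ we have $W_\EE\subset W_H$, and the subgroup $N_{W_H}(W_\EE)\subset N_\Wa(W_\EE)$ maps into $\{1,r_H\}\subset\Aut(\EE)$, because its elements fix $H$ pointwise and are affine isometries of $\EE$. The content is to verify that this image is all of $\{1,r_H\}$, equivalently that $N_{W_H}(W_\EE)\ne W_\EE$. I would derive this from the excellence of $I(A)$ inside a simple system of $\Wa$, already used in the proof of Proposition \ref{p:rel Wa}(2) (see \cite[\S2.4, \S2.9]{L-unip}): for an excellent pair of parabolic subsystems of a Coxeter group, the relevant normalizer quotient is nontrivial and acts on $\EE$ as $r_H$. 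This is the crux and the hardest step, as the property fails for a generic parabolic pair and relies on the detailed combinatorics of cuspidal supports classified in \cite[\S7]{L-unip} and \cite[\S11]{L-unip2}.

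Finally, for the reverse inclusion, once every element of $\Wa(\EE)$ admits a lift to $N_\Wa(W_\EE)$, one argues as follows. Given $w\in N_\Wa(W_\EE)$ with restriction $\bar w\in\Aut(\EE)$, use Proposition \ref{p:rel Wa}(1) to find the unique $v\in\Wa(\EE)$ with $v(A)=\bar w(A)$, and pick a lift $\tilde v\in N_\Wa(W_\EE)$. Then $\tilde v^{-1}w\in\Wa$ stabilizes the facet $A$ of $\fra$ setwise, hence lies in $W_A=W_\EE$, forcing $\bar w=v\in\Wa(\EE)$. This completes the identification of the image with $\Wa(\EE)$.
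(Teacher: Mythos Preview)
The paper does not supply a proof of this proposition; it simply cites \cite[Lemma 2.29(a)]{L-unip}. Your outline is correct and is essentially the argument found in \cite{L-unip}: injectivity from the identification of $W_{\EE}$ as the pointwise stabilizer of $\EE$, the lift of each $r_H$ via excellence of $I(A)$ (the nontrivial input, exactly as you flag), and the containment of the image in $\Wa(\EE)$ via simple transitivity on $\Alc(\EE)$ together with the fact that the setwise stabilizer of a facet in $\Wa$ equals its pointwise stabilizer.
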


\sss{} The linear action of $\Wa$ on $\fra_{\dm}$ (see \S\ref{sss:a dm}) restricts to a  linear action of $\Wa(\EE)\cong N_{\Wa}(W_{\EE})/W_{\EE}$ on $\EE_{\dm}$. We denote the action of $w\in\Wa(\EE)$ on $\EE_{\dm}$ and its dual space $\EE^{*}_{\dm}$ by $\xi\mapsto {}^{w}{\xi}$.

\sss{}\label{sss:coroot} For each hyperplane $H\in\frH(\EE)$, we shall introduce a root $\a_{H}\in\fra^{M,*}$ and a coroot $\a^{\vee}_{H}\in\fra^{M}$. Since $H$ itself is a relevant affine subspace, we have a \pL subgroup $L:=G_{H}\subset G$ containing $M$. The connected component of $\EE_{\RR}-H_{\RR}$ containing $A$ gives a parabolic subgroup $Q\subset L$ containing $M$ as a Levi subgroup. This gives a triangular decomposition of $\frl=\Lie L$:
\begin{equation}\label{ln}
\frl=\frn^{+}\oplus\fkm\oplus\frn^{-}
\end{equation}
where $\frn^{+}$ is the nilradical of $\Lie Q$. Then $\fkm$ acts on $\frn^{\pm}$ by the adjoint action.

The $Z^{0}_{M}$-weights on $\frn^{+}$ are multiples of each other (because their vanishing loci are parallel to $H$). Let $\a_{H}\in\xch(Z^{0}_{M})$ be the shortest weight that appear in $\frn^{+}$.  This defines an assignment $\frH(\EE)\to \xch(Z^{0}_{M})$ given by $H\mapsto \a_{H}$. The vanishing locus of $\a_{H}$ (as a hyperplane in $\fra^{M}$) is parallel to $H$. There is a unique affine function $\wt\a_{H}$ on $\EE$ whose linear part is $\a_{H}$ and whose vanishing locus is $H$.

Recall we have the reflection $r_{H}$ across $H$. We define $\a^{\vee}_{H}\in \fra^{M}$ to be the unique element such that 
\begin{equation*}
r_{H}(x)=x-\wt\a_{H}(x)\a^{\vee}_{H}, \quad \forall x\in \EE.
\end{equation*}

\sss{}\label{sss:ci} For each $H\in \frH(\EE)$,  we introduce a positive integer $c_{H}$ following \cite[2.10]{L-cusp1}. We use the same notation as \S\ref{sss:coroot}. Recall that $\calO\subset \fkm$ is the nilpotent orbit that supports a cuspidal local system. Pick any $e\in\calO$, and denote its adjoint action on $\frn^{+}$ (see \eqref{ln}) by $\ad(e)$. We define $c_{H}$ to be the largest integer $c$ such that $\ad(e)^{c-2}\neq0$ as an endomorphism of $\frn^{+}$. In other words, $c_{H}-1$ is the largest size of a Jordan block of $\frn^{+}$ under $\ad(e)$. Clearly $c_{H}$ is independent of the choice of $e\in \calO$. The assignment $H\mapsto c_{H}$ defines a function
\begin{equation*}
c:\frH(\EE)\to\ZZ_{>0}.
\end{equation*}
It is also clear that the function $c$ is invariant under the permutation action of $\Wa(\EE)$ on $\frH(\EE)$.

\sss{Canonicity of $(\EE, \Wa(\EE), I(A))$}\label{sss:indep EMA} So far we have used the choice of $A\in \frF^{[M]}$ to define the affine Coxeter group $\Wa^{A}=(\Wa(\EE), I(A))$. If $A_{1}\in\frF^{[M]}$ is another facet with $\EE_{1}=\Span(A_{1})$, then the affine Coxeter group $\Wa^{A_{1}}(\Wa(\EE_{1}),I(A_{1}))$ is also defined and it acts on $\EE_{1}$.  Suppose $A_{1}$ and $A$ are in the same $\Wa$-orbit, we claim that there is a {\em canonical} affine isometry $\EE\cong \EE_{1}$ and a canonical isomorphism of Coxeter groups $\Wa^{A}\cong \Wa^{A_{1}}$ under which the action of $\Wa(\EE)$ on $\EE$ and the action of $\Wa(\EE_{1})$ on $\EE_{1}$ are intertwined. 

In fact, let $w\in \Wa$ be such that $A_{1}=wA$. Then $w$ is well-defined in the coset $\Wa/W_{A}=\Wa/W_{\EE}$. The element $w$ defines an affine isometry $e_{w}: \EE\cong \EE_{1}$. Conjugation by $w$ gives an isomorphism $\Ad(w): \Wa(\EE)\cong N_{\Wa}(W_{\EE})/W_{\EE}\isom N_{\Wa}(W_{\EE_{1}})/W_{\EE_{1}}\cong \Wa(\EE_{1})$, sending $I(A)$ bijectively to $I(A_{1})$. The isomorphisms $e_{w}$ and $\Ad(w)$ clearly intertwine the action of $\Wa(\EE)$ on $\EE$ and the action of $\Wa(\EE_{1})$ on $\EE_{1}$. Changing $w$ into $ww_{1}$ for some $w_{1}\in W_{\EE}$, we have $e_{ww_{1}}=e_{w}\circ e_{w_{1}}$ and $\Ad(ww_{1})=\Ad(w)\Ad(w_{1})$. Since $w_{1}\in W_{\EE}$ fixes  $\EE$ pointwise, both $e_{w_{1}}$ and $\Ad(w_{1})$ are the identity isomorphisms. Therefore both $e_{w}$ and $\Ad(w)$ are independent of the choice of $w\in \Wa$ sending $A$ to $A_{1}$.

Under the canonical isomorphism $(\EE,\Wa(\EE), I(A))\cong (\EE_{1},\Wa(\EE_{1}), I(A_{1}))$ described above, the set of hyperplanes $\frH(\EE)$ maps bijectively to $\frH(\EE_{1})$, and the assignments $H\mapsto \a_{H}, \a^{\vee}_{H}$ and $c_{H}$ are compatible with such bijections. Moreover, the isomorphism $\EE\cong \EE_{1}$ extends uniquely to a linear isomorphism $\EE_{\dm}\cong\EE_{1,\dm}$ preserving the linear function $\d$, under which the linear actions of $\Wa(\EE)$ and $\Wa(\EE_{1})$ are intertwined.

\subsection{The graded DAHA attached to $\Wa^{A}$}\label{ss:DAHA}
We retain the setup as in the beginning of \S\ref{ss:Rel Wa}.

\sss{} Fix $A\in\frF^{[M]}$ and let $\EE=\Span(A)$, $M=G_{\EE}$.  For each $i\in I(A)$ (which is a subset of $\frH(\EE)$) indexing a simple reflection $s_{i}\in \Wa(\EE)$, we have defined the simple root $\a_{i}\in\fra^{[M],*}$, simple coroot $\a^{\vee}_{i}\in\fra^{[M]}$ and a positive integer $c_{i}$ in \S\ref{sss:coroot} and \S\ref{sss:ci}.

We define a graded $\QQ$-algebra $\HH_{c}(\Wa^{A})$ as follows. As a $\QQ$-vector space it is a tensor product
\begin{equation}\label{HH tensor}
\HH_{c}(\Wa^{A})=\QQ[u]\otimes\Sym(\EE^{*}_{\dm})\otimes\QQ[\Wa(\EE)].
\end{equation}
The grading on $\HH_{c}(\Wa^{A})$ is determined by assigning $u$ and $\EE^{*}_{\dm}$ degree $2$, and assigning $\Wa(\EE)$ degree 0.  The algebra structure on $\HH_{c}(\Wa^{A})$ is determined by:
\begin{itemize}
\item Each of the tensor factors in \eqref{HH tensor} is a subalgebra.
\item $u$ is central in $\HH_{c}(\Wa^{A})$.
\item For each simple reflection $s_{i}\in \Wa(\EE)$ ($i\in I(A)$) , we have
\begin{equation}\label{comm W E}
(1\otimes 1\otimes s_{i})\cdot (1\otimes v\otimes 1)-(1\otimes {}^{s_{i}}v\otimes 1)\cdot  (1\otimes 1\otimes s_{i})=c_{i}\jiao{v,\a^{\vee}_{i}}u\otimes 1\otimes 1, \quad \forall v\in\EE^{*}_{\dm}.
\end{equation}
Here the pairing $\jiao{v, \a^{\vee}_{i}}$ makes sense because $\fra^{M}$ is a subspace of $\EE_{\dm}$.
\end{itemize}

\sss{Specialization} The element $\d\in\EE^{*}_{\dm}$ is invariant under the action of $\Wa(\EE)$, therefore it is central in $\HH_{c}(\Wa^{A})$ by the relation \eqref{comm W E}. For $\nu\in\QQ$, we define a quotient of $\HH_{c}(\Wa^{A})$ given by
\begin{equation*}
\HH_{c,\nu}(\Wa^{A})=\HH_{c}(\Wa^{A})/(u+\nu, \d-1).
\end{equation*} 
Now $\HH_{c,\nu}(\Wa^{A})$ is no longer graded but only filtered: if we let $F_{\le i}=F_{\le i}\HH_{c,\nu}(\Wa^{A})$ be the image of degree $\le i$ elements in $\HH_{c}(\Wa^{A})$ for $i\in\ZZ_{\ge0}$, then $F_{\le i}\cdot F_{\le j}\subset F_{\le i+j}$ for all $i,j\in\ZZ_{\ge0}$.

\sss{}\label{sss:indep DAHA} By the discussion in \S\ref{sss:indep EMA}, if $A,A_{1}\in \frF^{[M]}$ are in the same $\Wa$-orbit, then there is a {\em canonical} isomorphism of $\QQ[u,\d]$-algebras $\HH_{c}(\Wa^{A})\cong\HH_{c}(\Wa^{A_{1}})$, and canonical isomorphism of algebras $\HH_{c,\nu}(\Wa^{A})\cong\HH_{c,\nu}(\Wa^{A_{1}})$ for any $\nu\in\QQ$.

\section{Facets and spirals}\label{s:facets spiral}
In this section we describe how to obtain spirals for the $\ZZ/m\ZZ$-graded Lie algebra $\frg$ from facets in $\fra$. The results in this section are closely related to, and sometimes reformulations of results in \cite[\S10]{LY2} (see \S\ref{sss:comp} for comparison with the notation in \cite{LY2}).

\subsection{$\ZZ/m\ZZ$-gradings on $\frg$}\label{ss:Zm}

\sss{}\label{sss:outer class} Let $G,\frg$ and the $\ZZ/m\ZZ$-grading on $\frg$ be given as in the Introduction. Such a $\ZZ/m\ZZ$-grading on $\frg$ is the same datum as a homomorphism $\th: \mu_{m}\to\Aut(G)$. Given such a $\th$, the graded piece $\frg_{i}$ is the subspace of $\frg$ where $\th(\z)$ acts by $\z^{i}$, for all $\z\in\mu_{m}$.

Consider the composition $\th_{\Out}: \mu_{m}\xr{\th}\Aut(G)\to \Out(G)$. Since elements in  $\Out(G)$ have order $1,2$ or $3$, there is a unique $e\in \{1,2,3\}$ and an injective homomorphism $\s:\mu_{e}\incl \Out(G)$ such that $\th_{\Out}$ factors as
\begin{equation*}
\th_{\Out}: \mu_{m}\xr{[m/e]}\mu_{e}\xr{\s}\Out(G)
\end{equation*}
where $[m/e]$ is the map $\z\mapsto \z^{m/e}$. We call $\s$ the {\em outer class} of the $\ZZ/m\ZZ$-grading $\th$. The discussions in \S\ref{s:RWH} are then applicable to the pair $(G,\s)$. In particular, we fix a maximal torus $T\subset G^{\s}$ as in \S\ref{sss:T}.

\sss{} By Lemma \ref{l:MT}(1), for a primitive element $\z\in\mu_{m}$, $\th(\z)$ is $G$-conjugate to an element in $T^{\ad}\times \s(\z^{m/e})$. Therefore, $\th$ is $G$-conjugate to a homomorphism
\begin{equation}\label{th x}
\th_{\ov{x}}=(\ov{x}, [m/e]): \mu_{m}\to T^{\ad}\times\mu_{e}
\end{equation} 
where the first factor is given by an element $\ov{x}\in \xcoch(T^{\ad})\otimes\ZZ/m\ZZ$. Therefore, without loss of generality, we may assume that $\th$ takes the form \eqref{th x} for some element $\ov x\in \xcoch(T^{\ad})\otimes\ZZ/m\ZZ$.

\sss{} For the rest of the section, we fix an element $x\in \xcoch(T^{\ad})$ with image $\ov{x}\in \xcoch(T^{\ad})\otimes\ZZ/m\ZZ$. In other words, $x$ is a cocharacter $\Gm\to T^{\ad}$ whose restriction to $\mu_{m}$ gives the first factor of $\th_{\ov x}$.

\begin{lemma} The point $x/m\in \xcoch(T^{\ad})_{\QQ}=\fra$ defines a \pL subgroup $G_{x/m}$ as in \S\ref{sss:Gy} and \S\ref{sss:My}.  We have $G_{\un0}=G_{x/m}$, and $T$ is a maximal torus of $G_{\un0}$.
\end{lemma}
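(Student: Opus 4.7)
The plan is to prove all three claims by identifying $G_{x/m}$ and $G_{\un 0}$ with the fixed-point subgroup $G^{\t}$ of a single semisimple automorphism $\t\in T^{\ad}\times\s(\mu_e^{*})$ of $G$.

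First I would check that $y:=x/m\in\fra$ satisfies the input requirements of the construction in \S\ref{sss:My}. Since the composition $\th_{\Out}\colon\mu_m\to\Out(G)$ factors through $\s\colon\mu_e\hookrightarrow\Out(G)$ via the $(m/e)$-power map, $e$ divides $m$, and in particular $m\in e\ZZ_{>0}$. As $\xcoch(T)\subset\xcoch(T^{\ad})$ has finite index, after clearing denominators we may write $y=\l/n$ with $\l\in\xcoch(T)$ and $n\in e\ZZ_{>0}$. The recipe of \S\ref{sss:My} then produces $G_{x/m}=G^{\t}$ with $\t:=\l(\z_n)\cdot \s(\z_n^{n/e})$ for a primitive $n$-th root of unity $\z_n$; this is independent of the rescaling since the root-system condition $\jiao{\a,y}+i/e\in\ZZ$ only uses $y$.

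Next I would identify $\t$ with a value of $\th$. Setting $\z:=\z_n^{n/m}$, a primitive $m$-th root of unity, a direct root-space computation shows that $\t$ and $\th(\z)=\ov x(\z)\cdot\s(\z^{m/e})$ act on each root space $\frg^{i}(\a)$ by the same scalar
\[
\z_n^{\jiao{\a,\l}+in/e} \;=\; \z^{\jiao{\a,x}+im/e},
\]
and both act trivially on $\frt$ (since $T^{\ad}$ centralizes $T$, and $\s$ fixes $T=T_0^{\s}$ pointwise). Therefore $\t=\th(\z)$ as automorphisms of $G$, and $G^{\t}=G^{\th(\z)}$ has Lie algebra $\frg_{\un 0}$.

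Finally, since $G$ is simply connected and $\t$ is semisimple of finite order, Steinberg's connectedness theorem gives that $G^{\t}$ is connected. Hence $G^{\t}$ is the unique connected reductive subgroup of $G$ with Lie algebra $\frg_{\un 0}$, namely $G_{\un 0}$, giving $G_{x/m}=G_{\un 0}$. The last assertion, that $T$ is a maximal torus of $G_{\un 0}$, then follows immediately from Lemma~\ref{l:MT}(2) applied to $\t\in T^{\ad}\times\s(\mu_e^{*})$. The only mildly delicate step is the root-of-unity bookkeeping in the second paragraph; everything else is direct from the definitions.
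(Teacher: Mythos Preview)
Your proof is correct and follows the same strategy as the paper's: exhibit a single semisimple $\t\in T^{\ad}\times\s(\mu_e^{*})$ with $G^{\t}=G_{x/m}$ (via the root-space calculation of \S\ref{sss:My}) and $G^{\t}=G_{\un0}$ (via Steinberg connectedness), then invoke Lemma~\ref{l:MT}(2). The paper is terser: it takes $\t=x(\z)\s(\z^{m/e})$ directly for a primitive $m$-th root $\z$ and says the computation in \S\ref{sss:My} applies verbatim. Your detour through rewriting $x/m=\l/n$ with $\l\in\xcoch(T)$ is more literal but not needed, since the root-space check in \S\ref{sss:My} only uses that $\jiao{\a,x}\in\ZZ$ for roots $\a$, which already holds for $x\in\xcoch(T^{\ad})$.

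One small remark on the ``mildly delicate'' step: to conclude $\t=\th(\z)$ as automorphisms of $G$, matching them on the root spaces $\frg^{i}(\a)$ and on $\frt=\Lie T$ is not quite enough---you also need the pieces $\frt_0\cap\frg^{i}$ for $i\neq0$ (the non-$\s$-fixed part of the Cartan $\frt_0$). This gap closes immediately once you note that your choice forces $m\mid n$, and then $\l(\z_n)=x(\z)$ in $T^{\ad}$ while $\z_n^{n/e}=\z^{m/e}$, so in fact $\t=\th(\z)$ holds on the nose without any root-space bookkeeping at all.
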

\begin{proof}
Let $\z$ be a primitive $m$-th root of unity. By definition $G_{\un0}=G^{\t}$ for $\t=x(\z)\s(\z^{m/e})$.  The calculation in  \S\ref{sss:My} shows that $G^{\t}=G_{x/m}$. By Lemma \ref{l:MT}(2), $T$ is a maximal torus of $G^{\t}=G_{\un0}$.
\end{proof}

The proof above shows that
\begin{equation}\label{frgn}
\frg_{\un{n}}=\oplus_{(\a,i)\in\wt\Phi(G,T); \jiao{\a,x}/m+i/e\in n/m+\ZZ}\frg^{i}(\a).
\end{equation}
The Weyl group $W_{G_{\un0}}$  of $G_{\un0}$ can be identified with $W_{x/m}$, the stabilizer of $x/m$ under the affine Weyl group $\Wa$.

\subsection{Graded \pL subgroups}

\sss{}\label{sss:comp gr} Let $\fkm\subset\frg$ be a subalgebra.  A $\ZZ$-grading $\fkm_{*}$ on $\fkm$ is said to be {\em compatible} with the given $\ZZ/m\ZZ$-grading on $\frg$ if for any $i\in\ZZ/m\ZZ$ we have
\begin{equation*}
\frg_{i}\cap \fkm=\bigoplus_{n\equiv i(m)}\fkm_{n}.
\end{equation*}

\sss{} Let $\frM^{\Zg}$ be the set of quadruples $(M,M_{0},\fkm,\fkm_{*})$ where $M$ is a \pL subgroup of $(G,\s)$ with Lie algebra  $\fkm$, $\fkm=\oplus_{n\in\ZZ}\fkm_{n}$ is a $\ZZ$-grading on $\fkm$ compatible with the $\ZZ/m\ZZ$-grading on $\frg$, and $M_{0}=e^{\fkm_{0}}$. The adjoint representation of $G_{\un0}$ on $\frg$ induces an action of  $G_{\un0}$ on $\frM^{\Zg}$, and we denote the quotient $G_{\un0}\bs\frM^{\Zg}$ by $\un\frM^{\Zg}$. Let $\frM^{\Zg}_{T}\subset\frM^{\Zg}$ be the subset of quadruples $(M,M_{0},\fkm,\fkm_{*})$ such that $M\in \frM_{T}$ and $T\subset M_{0}$.

If $(M,M_{0},\fkm,\fkm_{*})\in\frM^{\Zg}_{T}$, then $T\subset M_{0}$, hence the $\ZZ$-grading on $\fkm$ is given by a unique element $\jmath\in \xcoch(T/Z_{M})$. Hence we get a bijection
\begin{equation*}
\frM^{\Zg}_{T}\cong \{(M,\jmath)|M\in \frM_{T}, \jmath\in \xcoch(T/Z_{M})\}.
\end{equation*}
The Weyl group $W_{G_{\un0}}$ of $G_{\un0}$ acts on $\frM^{\Zg}_{T}$.

\begin{lemma}\label{l:frM T} The natural map $\frM^{\Zg}_{T}\to\un\frM^{\Zg}$ induces an  injective map
$$W_{G_{\un0}}\bs \frM^{\Zg}_{T}\incl \un\frM^{\Zg}.$$
\end{lemma}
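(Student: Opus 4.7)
\medskip

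\noindent\textbf{Proof plan for Lemma \ref{l:frM T}.}

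The plan is the standard ``conjugate back to $T$'' argument: given any $G_{\un0}$-conjugacy between two elements of $\frM^{\Zg}_{T}$, modify the conjugator (on the left) by an element of the target's $M'_{0}$ so that it sends $T$ to $T$, producing an element of $N_{G_{\un0}}(T)$.

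First I would record a preliminary observation that will be used repeatedly: for any $(M,M_{0},\fkm,\fkm_{*})\in\frM^{\Zg}$, one has $M_{0}\subset G_{\un0}$. Indeed, the compatibility of the $\ZZ$-grading on $\fkm$ with the $\ZZ/m\ZZ$-grading on $\frg$ (\S\ref{sss:comp gr}) gives $\fkm_{0}\subset \frg_{\un0}\cap \fkm\subset\frg_{\un0}$, and since $G_{\un0}$ is the connected subgroup with Lie algebra $\frg_{\un 0}$ and $M_{0}=e^{\fkm_{0}}$ is connected with Lie algebra $\fkm_{0}$, we get $M_{0}\subset G_{\un0}$. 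In the same spirit, if $(M,M_{0},\fkm,\fkm_{*})\in\frM^{\Zg}_{T}$, then $T\subset M_{0}\subset M$, and since $T$ is a maximal torus of $M$ (by definition of $\frM_{T}$), it is automatically a maximal torus of $M_{0}$.

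Now suppose $(M,M_{0},\fkm,\fkm_{*})$ and $(M',M'_{0},\fkm',\fkm'_{*})$ are elements of $\frM^{\Zg}_{T}$ and $g\in G_{\un0}$ conjugates the first to the second. Then $gTg^{-1}\subset gM_{0}g^{-1}=M'_{0}$ is a maximal torus of $M'_{0}$, as is $T$. By conjugacy of maximal tori in the connected group $M'_{0}$, there exists $m\in M'_{0}$ such that $(mg)T(mg)^{-1}=T$. Set $n:=mg$; by the preliminary observation, $m\in M'_{0}\subset G_{\un0}$, so $n\in N_{G_{\un0}}(T)$.

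Finally I would check that conjugation by $m$ leaves $(M',M'_{0},\fkm',\fkm'_{*})$ unchanged, so that $n$ still conjugates the first quadruple to the second. Clearly $mM'm^{-1}=M'$ and $mM'_{0}m^{-1}=M'_{0}$, since $m\in M'_{0}\subset M'$. For the grading, the $\ZZ$-grading on $\fkm'$ corresponds to a cocharacter $\jmath'\in\xcoch(T/Z_{M'})$ such that $M'_{0}=C_{M'}(\jmath'(\Gm))$; since $m\in M'_{0}$ commutes with $\jmath'(\Gm)$, it preserves the weight decomposition, so $m\fkm'_{k}m^{-1}=\fkm'_{k}$ for all $k$. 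Hence $n$ realizes the two quadruples as $N_{G_{\un0}}(T)$-conjugate. A parallel (easier) check shows that $T\subset M_{0}$ acts trivially on the quadruple itself, so the $N_{G_{\un0}}(T)$-action descends to a $W_{G_{\un0}}$-action, proving injectivity of $W_{G_{\un0}}\backslash\frM^{\Zg}_{T}\to\un\frM^{\Zg}$. There is no real obstacle here; the only point to watch is that the normalization element $m$ must be chosen from $M'_{0}$ (rather than $M'$) so that it both lies in $G_{\un0}$ and preserves the $\ZZ$-grading.
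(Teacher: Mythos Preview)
Your proposal is correct and follows essentially the same argument as the paper: conjugate $gTg^{-1}$ back to $T$ by an element of $M'_{0}$, yielding an element of $N_{G_{\un0}}(T)$ that still conjugates the first quadruple to the second. Your write-up is in fact more detailed than the paper's, which omits the verification that $M_{0}\subset G_{\un0}$ and that the correcting element $m\in M'_{0}$ preserves the grading; one small note is that $T$ being a maximal torus of $M$ is Lemma~\ref{l:MT}(2) rather than literally the definition of $\frM_{T}$.
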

\begin{proof} Suppose $(M,M_{0},\fkm,\fkm_{*})$ and $(M',M'_{0},\fkm',\fkm'_{*})$ are both in $\frM^{\Zg}_{T}$, and $g\in G_{\un0}$ is such that $\Ad(g)(M,M_{0},\fkm,\fkm_{*})=(M',M'_{0},\fkm',\fkm'_{*})$. Since $\Ad(g)M_{0}=M'_{0}$, $\Ad(g)T$ and $T$ are both maximal tori of $M'_{0}$, there exists $m'\in M'_{0}$ such that $\Ad(m')\Ad(g)T=T$. Replacing $g$ by $m'g$, we have $\Ad(g)T=T$, hence $g\in N_{G_{\un0}}(T)$. Since $T$ acts trivially on $\frM^{\Zg}_{T}$, letting $w\in W_{G_{\un0}}$ be the image of $g$, then $w\cdot(M,M_{0},\fkm,\fkm_{*})=(M',M'_{0},\fkm',\fkm'_{*})$. This shows that the fibers of $\frM^{\Zg}_{T}\to\un\frM^{\Zg}$ are $W_{G_{\un0}}$-orbits.
\end{proof}

\subsection{Spirals and splittings}\label{ss:spiral} 

\sss{} We recall from \cite{LY1} some basic definitions about spirals. An element  $\l\in Y_{G_{\un0},\QQ}$ (see \cite[\S0.11]{LY1}, it means $\l$ is a formal expression $\l'/N$ where $\l':\Gm\to G_{\un0}$ and $N\in\ZZ_{>0}$) induces a $\QQ$-grading $\frg=\oplus_{r\in\QQ}({}^{\l}_{r}\frg)$ on $\frg$ such that each $\frg_{i}$ is the direct sum of ${}^{\l}_{r}\frg_{i}={}^{\l}_{r}\frg\cap \frg_{i}$ for various $r\in\QQ$. Using $\l$ we define
\begin{equation}\label{defn sp}
{}^{\e}\frp^{\l}_{n}:={}^{\l}_{\geq \e n}\frg_{\un n}=\bigoplus_{r\in\QQ, r\ge \e n}({}^{\l}_{r}\frg_{\un n}), \quad\forall n\in\ZZ.
\end{equation}

Recall from \cite[\S2.5]{LY1} that an {\em $\e$-spiral} for the $\ZZ/m\ZZ$-graded $\frg$ is a collection of subspaces $\frp_{*}=\{\frp_{n}|n\in\ZZ\}$ of the form ${}^{\e}\frp^{\l}_{*}$ for some $\l\in Y_{G_{\un0},\QQ}$. 

We also recall
\begin{equation*}
{}^{\e}\frl^{\l}_{n}:={}^{\l}_{\e n}\frg_{\un n}.
\end{equation*}
Since $\e$ is fixed throughout, we shall omit the left superscript $\e$ from the notation such as ${}^{\e}\frp^{\l}_{n}$ and ${}^{\e}\frl^{\l}_{n}$.

The direct sum $\frl^{\l}=\oplus_{n\in\ZZ}\frl^{\l}_{n}$ is a reductive subalgebra of $\frg$, and we let $L^{\l}=e^{\frl^{\l}}$, $L^{\l}_{0}=e^{\frl^{\l}_{0}}$.  Note that in \cite{LY1}, $\frl^{\l}_{n}$ was denoted $\wt\frl^{\l}_{n}$, $L^{\l}$ was denoted  $\wt\L^{\l}$, etc.; in this paper we omit the tildes.

Recall from \cite[\S2.6]{LY1} that a {\em splitting} of the spiral $\frp_{*}$ is a system of the form $(L^{\l},L^{\l}_{0},\frl^{\l},\frl^{\l}_{*})$ for some $\l\in Y_{G_{\un0}, \QQ}$ such that $\frp_{*}=\frp^{\l}_{*}$. The following statement is proved in \cite[2.6(c)]{LY1}.

\begin{lemma}\label{l:spl pL} For $\l\in Y_{G_{\un0},\QQ}$, $L^{\l}$ is a \pL subgroup of $(G,\s)$.
\end{lemma}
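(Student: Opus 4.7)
The plan is to realize $L^{\l}$ as the fixed-point subgroup $G^{\t}$ for a carefully chosen semisimple automorphism $\t\in\Aut^{*}_{\s}(G)$. Write $\l=\l'/N$ with $\l'\colon\Gm\to G_{\un0}$ an honest cocharacter and $N\in\ZZ_{>0}$, so that $\l'(z)$ acts on ${}^{\l}_{r}\frg$ by $z^{rN}$, forcing $rN\in\ZZ$ whenever ${}^{\l}_{r}\frg\neq 0$. Fix a primitive $m$-th root of unity $\z$ and, using $\e=\pm 1$, pick $t\in\Gm$ of order $mN$ satisfying $t^{\e N}=\z^{-1}$. Set
\[
\t := \th(\z)\cdot \l'(t) \in \Aut(G),
\]
where $\l'(t)\in G_{\un0}$ is viewed as an inner automorphism of $G$. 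Because $\l'(t)\in G_{\un0}=G^{\th(\z)}$, the automorphism $\th(\z)$ commutes with $\textup{Inn}(\l'(t))$ in $\Aut(G)$, so $\t$ is a product of two commuting semisimple elements and is itself semisimple. Its image in $\Out(G)$ equals that of $\th(\z)$, which by \eqref{th x} is $\s(\z^{m/e})$, a primitive element of $\s(\mu_{e})$ since $\z^{m/e}$ is a primitive $e$-th root of unity; hence $\t\in \Aut^{*}_{\s}(G)$.

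It then remains to identify $G^{\t}$ with $L^{\l}$ by an eigenvalue computation. For a weight vector $v\in {}^{\l}_{r}\frg_{\un i}$ one has $\th(\z)v=\z^{i}v$ and $\l'(t)v=t^{rN}v$, hence
\[
\t(v)\;=\;\z^{i}t^{rN}v \;=\; t^{(r-\e i)N}v,
\]
using $\z^{-1}=t^{\e N}$. Since $t$ has order $mN$, this scalar equals $1$ iff $m\mid (r-\e i)$, equivalently iff there exists $n\in\ZZ$ with $r=\e n$ and $\un i=\un n$. Summing over such weights yields
\[
\frg^{\t} \;=\; \bigoplus_{n\in\ZZ}{}^{\l}_{\e n}\frg_{\un n} \;=\; \bigoplus_{n\in\ZZ}\frl^{\l}_{n} \;=\; \frl^{\l}.
\]
Since $G$ is simply connected and $\t$ is semisimple, Steinberg's theorem gives that $G^{\t}$ is connected, so $G^{\t}=e^{\frg^{\t}}=e^{\frl^{\l}}=L^{\l}$, and by Definition \ref{defn:pL} this exhibits $L^{\l}$ as a \pL subgroup of $(G,\s)$.

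The main obstacle is the eigenvalue matching: one must choose $t$ so that the inner twist $\l'(t)$ exactly cancels the scaling of $\th(\z)$ on precisely the weight spaces making up $\frl^{\l}$, and on no others. The two conditions $t^{\e N}=\z^{-1}$ and $\textup{ord}(t)=mN$ are forced by this requirement, and once they are in place everything else---semisimplicity of $\t$, primitivity of its outer class, and the identification $\frg^{\t}=\frl^{\l}$---follows mechanically. No deeper input is needed beyond the definitions of $\frl^{\l}$, the structure of $\Aut^{*}_{\s}(G)$, and Steinberg's connectedness theorem.
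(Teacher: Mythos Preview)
Your argument is correct. The paper itself does not prove this lemma but simply cites \cite[2.6(c)]{LY1}; your direct construction of the semisimple automorphism $\t=\th(\z)\cdot\textup{Inn}(\l'(t))$ and the eigenvalue computation showing $\frg^{\t}=\frl^{\l}$ is exactly the expected approach, and is in the same spirit as the explicit constructions of $\t$ appearing in \S\ref{sss:My} and in the proof that $G_{\un0}=G_{x/m}$.

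One small point worth making explicit: the existence of $t$ of exact order $mN$ with $t^{\e N}=\z^{-1}$ is not automatic for every $N$, but follows from a short CRT argument (choose $b$ with $b\equiv a\pmod m$ and $b\equiv 1\pmod p$ for each prime $p\mid N$ not dividing $m$, where $(\omega^{N})^{a}=\z^{-\e}$ for a primitive $mN$-th root $\omega$); alternatively one may simply replace $N$ by a suitable multiple. Either way the rest of your proof goes through unchanged.
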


\sss{} Let $\un\frP$ be the set of $G_{\un0}$-conjugacy classes of $\e$-spirals for the $\ZZ/m\ZZ$-graded $\frg$. By Lemma \ref{l:spl pL}, assigning to an $\e$-spiral $\frp_{*}$ any of its splittings (which are conjugate under $P_{0}=e^{\frp_{0}}$ by \cite[\S2.7(a)]{LY1}), we get a map
\begin{equation*}
\un\L: \un\frP\to\un\frM^{\Zg}.
\end{equation*}

\subsection{Spirals and facets}

\sss{} Let $\frP_{T}$ be the set of $\e$-spirals $\frp_{*}$ of the form $\frp^{\l}_{*}$ for some $\l\in\fra$ (note that $\fra=Y_{T,\QQ}$ is naturally a subset of $Y_{G_{\un0},\QQ}$). The Weyl group $W_{G_{\un0}}$ acts on $\frP_{T}$ by conjugation.

\begin{lemma}\label{l:frP T} The natural map $\frP_{T}\to \un\frP$ induces a bijection
\begin{equation*}
W_{G_{\un0}}\bs \frP_{T}\isom\un\frP.
\end{equation*}
\end{lemma}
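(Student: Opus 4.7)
The plan is to prove the two statements separately: (a) the natural map $\frP_{T}\to \un\frP$ is surjective, and (b) its fibers are exactly $W_{G_{\un0}}$-orbits. Along the way I first note that the $W_{G_{\un0}}$-action on $\frP_{T}$ is well-defined: if $\l\in\fra=Y_{T,\QQ}$, then $T$ centralizes $\l$, so $\Ad(T)$ acts trivially on $\frp^{\l}_{*}$, and conjugation by $N_{G_{\un0}}(T)$ factors through $W_{G_{\un0}}$.

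For surjectivity, start with an arbitrary $\e$-spiral $\frp_{*}=\frp^{\l}_{*}$ with $\l\in Y_{G_{\un0},\QQ}$. Writing $\l=\l'/N$, the cocharacter $\l'\colon \Gm\to G_{\un0}$ has image contained in some maximal torus of $G_{\un0}$, and all maximal tori of $G_{\un0}$ are $G_{\un0}$-conjugate to $T$. Hence there exists $g\in G_{\un0}$ with $\Ad(g)\l\in\fra$, and $\Ad(g)\frp^{\l}_{*}=\frp^{\Ad(g)\l}_{*}\in\frP_{T}$.

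For injectivity, suppose $\l,\l'\in\fra$ and $g\in G_{\un0}$ satisfies $\Ad(g)\frp^{\l}_{*}=\frp^{\l'}_{*}$. The splittings $(L^{\l},L^{\l}_{0},\frl^{\l},\frl^{\l}_{*})$ and $(L^{\l'},L^{\l'}_{0},\frl^{\l'},\frl^{\l'}_{*})$ of $\frp^{\l}_{*}$ and $\frp^{\l'}_{*}$ both lie in $\frM^{\Zg}_{T}$ since $\l,\l'\in\fra$. Now $\Ad(g)$ of the first splitting is another splitting of $\frp^{\l'}_{*}$, so by the $P^{\l'}_{0}$-conjugacy of splittings \cite[\S2.7(a)]{LY1} there exists $p\in P^{\l'}_{0}:=e^{\frp^{\l'}_{0}}$ with $\Ad(pg)L^{\l}_{0}=L^{\l'}_{0}$. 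Then $\Ad(pg)T$ and $T$ are two maximal tori of $L^{\l'}_{0}$, so there exists $h\in L^{\l'}_{0}\subset P^{\l'}_{0}$ with $\Ad(h)\Ad(pg)T=T$. Set $w:=hpg$.

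To conclude, I need to check two things. First, that $w\in N_{G_{\un0}}(T)$: by construction $\Ad(w)T=T$, and $w\in G_{\un0}$ since each of $g,p,h\in G_{\un0}$ (as $P^{\l'}_{0}\subset G_{\un0}$). Second, that $\Ad(w)\frp^{\l}_{*}=\frp^{\l'}_{*}$: since $hp\in P^{\l'}_{0}$, this reduces to the statement that $P^{\l'}_{0}$ stabilizes every $\frp^{\l'}_{n}$, which in turn follows from the infinitesimal inclusion $[\frp^{\l'}_{0},\frp^{\l'}_{n}]\subset \frp^{\l'}_{n}$ (immediate from $[{}^{\l'}_{\ge0}\frg,{}^{\l'}_{\ge\e n}\frg]\subset {}^{\l'}_{\ge\e n}\frg$). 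The image of $w$ in $W_{G_{\un0}}$ then sends $\frp^{\l}_{*}$ to $\frp^{\l'}_{*}$, proving that the fiber is a single $W_{G_{\un0}}$-orbit. The main subtlety is keeping track of which splittings are conjugated by what, and making sure to invoke the splitting-uniqueness result of \cite[\S2.7(a)]{LY1} in the right way; once that is in place, everything else is the standard "move maximal tori around inside a Levi" argument, exactly parallel to the proof of Lemma~\ref{l:frM T}.
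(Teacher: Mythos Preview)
Your proof is correct and follows essentially the same approach as the paper's: surjectivity by conjugating an arbitrary cocharacter into $\fra$, and injectivity by adjusting $g$ by an element of the parabolic $P_{0}$ until it normalizes $T$. The only minor difference is packaging: you invoke the splitting-conjugacy result \cite[\S2.7(a)]{LY1} to align the Levi pieces, whereas the paper uses directly that two Levi subgroups of the same parabolic $P_{0}$ are $P_{0}$-conjugate, and then concludes by showing $\Ad(g)\l'\in\fra$ and appealing to the standard fact that $G_{\un0}$-conjugate elements of $\fra$ are $W_{G_{\un0}}$-conjugate. Both routes arrive at the same element of $N_{G_{\un0}}(T)$.
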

\begin{proof}
First, $\frP_{T}\to \un\frP$ is surjective because every $\l\in Y_{G_{\un0},\QQ}$ is $G_{\un0}$-conjugate to one in $\fra$. 

Now suppose for $\l,\l'\in \fra$,  $\frp^{\l}_{*}$ and $\frp^{\l'}_{*}$ are $G_{\un0}$-conjugate to each other. Let $g\in G_{\un0}$ be such that $\Ad(g)\frp^{\l'}_{*}=\frp^{\l}_{*}$, i.e., $\frp^{\Ad(g)\l'}_{*}=\frp^{\l}_{*}$. Let $P_{0}=e^{\frp^{\l}_{0}}$, $L_{0}=e^{\frl^{\l}_{0}}$ and $P'_{0}=e^{\frp^{\l'}_{0}}$, $L'_{0}=e^{\frl^{\l'}_{0}}$. Then $P_{0}$ is a parabolic subgroup of $G_{\un0}$ with Levi subgroup $L_{0}$; $P'_{0}$ is a parabolic subgroup of $G_{\un0}$ with Levi subgroup $L'_{0}$. We have $\Ad(g)P'_{0}=P_{0}$, hence both $\Ad(g)L'_{0}$ and $L_{0}$ are Levi subgroups of $P_{0}$. Therefore, there exists $p\in P_{0}$ such that $\Ad(p)\Ad(g)L'_{0}=L_{0}$. Replacing $g$ by $pg$, we have $\Ad(g)(P'_{0},L'_{0})=(P_{0},L_{0})$. Since $\frl_{0}=\Ad(g)\frl'_{0}={}^{\Ad(g)\l'}_{0}\frg_{\un0}$ contains $\frt$, then for any integer $N$ such that $N\l'$ defines a homomorphism $N\l':\Gm\to G_{\un0}$, the image of $\Ad(g)(N\l')$ commutes with $T$. Therefore $\Ad(g)(N\l')$ has image in $T$ because $T$ is a maximal torus in $G_{\un0}$, hence $\Ad(g)\l'\in\fra$.  Since $\l'$ and $\Ad(g)\l'$ are both in $\fra$ and are conjugate under $G_{\un0}$, they are in the same $W_{G_{\un0}}$-orbit. Therefore there exists $w\in W_{G_{\un0}}$ such that $\Ad(g)\l'=w\l'$, hence $\frp^{\l}_{*}=\frp^{\Ad(g)\l'}_{*}=\frp^{w\l'}_{*}=w\frp^{\l'}_{*}$, i.e., $\frp^{\l}_{*}$ and $\frp^{\l'}_{*}$ are in the same $W_{G_{\un0}}$-orbit.
\end{proof}

\sss{} By \eqref{frgn}, we have
$$
\frl^{\l}= \oplus_{(\a,i)\in\wt\Phi(G,T); \jiao{\a,x-\e\l}/m+i/e\in\ZZ}\frg^{i}(\a)
$$
Comparing with the root system of $G_{(x-\e\l)/m}$ (see \S\ref{sss:My}), we see that
\begin{equation}\label{two M}
\frl^{\l}=\frg_{(x-\e\l)/m}; \quad L^{\l}=G_{(x-\e\l)/m}
\end{equation}
For $y\in \fra$, we let 
\begin{equation*}
\l_{y}:=\e(x-my),
\end{equation*}
then we have  $L^{\l_{y}}=G_{y}$.

\begin{lemma}\label{l:spiral facet}
Let $y,y'\in \fra$, then $\frp^{\l_{y}}_{*}=\frp^{\l_{y'}}_{*}$ if and only if $y$ and $y'$ are in the same facet of $\fra$.
\end{lemma}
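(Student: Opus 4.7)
The plan is to translate the equality $\frp^{\l_y}_*=\frp^{\l_{y'}}_*$ into a pointwise condition on root spaces, and then recognize the resulting condition as the facet condition on $\fra$. Since $\frt\subset\frp^{\l_y}_0$ for every $y\in\fra$, and every other $\frg^{i}(\a)$ is one-dimensional, the spiral is determined by which root spaces $\frg^{i}(\a)$ lie in each $\frp^{\l_y}_n$.

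First I would compute this criterion explicitly. By \eqref{frgn}, $\frg^{i}(\a)\subset\frg_{\un n}$ iff $n\equiv\jiao{\a,x}+im/e\pmod m$; writing such an $n$ as $\jiao{\a,x}+im/e-mk$ with $k\in\ZZ$, the further condition $\frg^{i}(\a)\subset\frp^{\l_y}_n$ unravels (using $\l_y=\e(x-my)$ and $\jiao{\a,\l_y}=\e(\jiao{\a,x}-m\jiao{\a,y})$) into $\e k\geq \e\,t(\a,i,y)$, where
\[
t(\a,i,y):=\jiao{\a,y}+i/e.
\]
Thus, for fixed $(\a,i)$, the contribution of $\frg^{i}(\a)$ to the spiral is controlled by $\lceil t(\a,i,y)\rceil$ (if $\e=1$) or $\lfloor t(\a,i,y)\rfloor$ (if $\e=-1$).

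Next I would exploit the $(\a,i)\mapsto(-\a,-i)$ symmetry of $\wt\Phi(G,T)$: since the Killing form is $T\times\s(\mu_e)$-equivariant, it pairs $\frg^{i}(\a)$ non-degenerately with $\frg^{-i}(-\a)$, so $(\a,i)\in\wt\Phi(G,T)$ iff $(-\a,-i)\in\wt\Phi(G,T)$, and $t(-\a,-i,y)=-t(\a,i,y)$. Applying the cutoff condition to both $(\a,i)$ and $(-\a,-i)$ simultaneously forces both $\lceil t(\a,i,y)\rceil=\lceil t(\a,i,y')\rceil$ and $\lfloor t(\a,i,y)\rfloor=\lfloor t(\a,i,y')\rfloor$; equivalently, $t(\a,i,y)$ and $t(\a,i,y')$ belong to the same cell of the decomposition $\RR=\bigsqcup_{K\in\ZZ}\{K\}\,\sqcup\,\bigsqcup_{K\in\ZZ}(K,K+1)$.

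Finally I would interpret this geometrically. As $j\in\ZZ$ varies, the hyperplane $\{\jiao{\a,\cdot}+(i+ej)/e=0\}\in\frH$ is exactly $\{t(\a,i,\cdot)=-j\}$, so the cell condition on $t(\a,i,-)$ says precisely that $y$ and $y'$ lie on the same side of each hyperplane parallel to $\ker\a$, or simultaneously on such a hyperplane. Ranging over all $(\a,i)\in\wt\Phi(G,T)$ exhausts $\frH$, so the spiral equality is equivalent to $y,y'$ being in the same facet. The main obstacle is the bookkeeping in the middle step: the na\"ive condition $\lceil t\rceil=\lceil t'\rceil$ alone is too coarse (it identifies a point on a hyperplane with points immediately below it), and only after using the $(-\a,-i)$ symmetry does one obtain the symmetric cell decomposition that matches the facet structure.
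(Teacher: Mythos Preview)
Your proof is correct and follows essentially the same strategy as the paper's: both reduce the equality of spirals to a sign condition on the evaluations of affine roots at $y$ and $y'$, which is exactly the facet condition. The paper packages this more directly by writing the membership criterion as $\wt\a(x/m)-n/m=0\geq\wt\a(y)$ for some lift $\wt\a$ of $(\a,i)$, so the facet condition is immediate; you instead phrase it via $\lceil t(\a,i,y)\rceil$ and then invoke the $(\a,i)\mapsto(-\a,-i)$ symmetry explicitly to recover both ceiling and floor, whereas the paper uses that same symmetry only implicitly (in the converse direction, when choosing $\wt\a$ with $\wt\a(y)\le 0<\wt\a(y')$).
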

\begin{proof} We argue for the case $\e=1$ and the case $\e=-1$ is similar.
From \eqref{frgn} and \eqref{defn sp} we see that $\frp^{\l_{y}}_{n}$ is the sum of $\frg^{i}(\a)$ where $(\a,i)\in\wt\Phi(G,T)$ such that $(\jiao{\a,x}-n)/m+i/e\in\ZZ$ and that $(\jiao{\a,x}-n)/m\geq \jiao{\a,y}$.  In other words the condition for $\frg^{i}(\a)$ to appear in $\frp^{\l_{y}}_{n}$ is that for some $\ell\in\ZZ$ with residue class $i$ mod $e$, we have $(\jiao{\a,x}-n)/m+\ell/e=0\geq \jiao{\a,y}+\ell/e$. I.e., $\frg^{i}(\a)$ appears in $\frp^{\l_{y}}_{n}$ if and only if there exists an affine root $\wt\a=\a+\ell/e$ with image $(\a,i)\in\wt\Phi(G,T)$ such that 
\begin{equation}\label{gia to app}
\wt\a(x/m)-n/m=0\geq\wt\a(y). 
\end{equation}

Suppose $y$ and $y'$ are in the same facet, then for any affine real root $\wt\a$, $\wt\a(y)$  and $\wt\a(y')$ are either both positive or both negative or both zero. Therefore the condition \eqref{gia to app} holds for $y$ if and only if it holds for $y'$. Hence $\frp^{\l_{y}}_{*}=\frp^{\l_{y'}}_{*}$.

Conversely, suppose $y$ and $y'$ do not lie in the same facet, then there exists a real affine root $\wt\a\in \Phi_{\aff}$ such that $\wt\a(y)\leq0$ but $\wt\a(y')>0$. Let $(\a,i)\in\wt\Phi(G,T)$ be the image of $\wt\a$. Let $n=m\wt\a(x/m)\in\ZZ$, then by \eqref{gia to app} we see that $\frg^{i}(\a)$ is contained in $\frp^{\l_{y}}_{n}$ but  not in $\frp^{\l_{y'}}_{n}$. Therefore $\frp^{\l_{y}}_{*}$ is not equal to $\frp^{\l_{y'}}_{*}$ in this case.
\end{proof}

\sss{}\label{sss:LF} Let $F\in\frF$, we define the $\e$-spiral $\frp^{F}_{*}$ as $\frp^{\l_{y}}_{*}$ for any $y\in F$. This is well-defined by Lemma \ref{l:spiral facet}. The map $F\mapsto \frp^{F}_{*}$ then gives a bijection
$$
\Delta_{x}: \frF\isom \frP_{T}.
$$
The notation $\Delta_{x}$ suggests that it depends on $x$ and not just on $\ov x$.

We define a splitting $(L^{F},L^{F}_{0}, \frl^{F},\frl^{F}_{*})$ of $\frp^{F}_{*}$ as $(L^{\l_{y}},L^{\l_{y}}_{0}, \frl^{\l_{y}},\frl^{\l_{y}}_{*})$ for any $y\in F$. Then the assignment $F\mapsto (L^{F},L^{F}_{0}, \frl^{F},\frl^{F}_{*})\in \frM_{T}$ gives a map
$$\L_{T}:  \frP_{T}\xr{\Delta^{-1}_{x}}\frF\to \frM^{\Zg}_{T}.$$
Alternatively, for $\frp_{*}\in \frP_{T}$, $\L_{T}(\frp_{*})$ may be characterized as the unique splitting $(L,L_{0},\frl,\frl_{*})$ of $\frp_{*}$ such that $T\subset L_{0}$, but we shall not need this statement.

\begin{lemma}\label{l:j integral} For any $\EE\in\frE$, denoting $G_{\EE}$ by $M$, we define $\jmath_{x}\in\fra_{M}$ by
$$
\jmath_{x}=m(\pi_{M}(x/m)-\pi_{M}(\EE)).
$$
Then $\jmath_{x}\in\xcoch(T/Z_{M})\subset\fra_{M}$, and therefore $\jmath_{x}$ defines a $\ZZ$-grading $\fkm_{*}$ on $\fkm=\Lie M$. 
\end{lemma}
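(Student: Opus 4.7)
The plan is a direct computation unwinding the definitions, with the key arithmetic input being that $e$ divides $m$.

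First I would observe that $\jmath_x$ is well-defined and admits a concrete lift to $\fra$. Since $\pi_M$ kills $\fra^M$ and $\EE$ is a torsor under $\fra^M$, the point $\pi_M(\EE)\in\fra_M$ equals $\pi_M(y)$ for any $y\in\EE$. Therefore
\begin{equation*}
\jmath_x = \pi_M(x)-m\pi_M(y)=\pi_M(x-my)\quad\text{for any }y\in\EE.
\end{equation*}

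Next I would use the characterization of the integral lattice in $\fra_M$: since $T/Z_M$ is a maximal torus of the adjoint group $M^{\ad}=M/Z_M$, one has
\begin{equation*}
\xcoch(T/Z_M)=\{\l\in\fra_M\mid\jiao{\a,\l}\in\ZZ\ \forall\,\a\in\Phi(M,T)\},
\end{equation*}
in complete analogy with the characterization of $\xcoch(T^{\ad})$ recalled in \S\ref{sss:aff roots}. So it suffices to check, for each $\a\in\Phi(M,T)$, that $\jiao{\a,x-my}\in\ZZ$; here the pairing descends through $\pi_M$ because $\a$ vanishes on $\fra^M=\xcoch(Z_M^0)\otimes\QQ$.

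The check splits into two parts. For the first, $\jiao{\a,x}\in\ZZ$ because $x\in\xcoch(T^{\ad})$ and $\a\in\Phi(M,T)\subset\Phi(G,T)$, directly from the description of $\xcoch(T^{\ad})$ in \S\ref{sss:aff roots}. For the second, recall that $M=G_{\EE}$ means that the root $\a\in\Phi(M,T)$ comes with a unique $i_{\a}\in\ZZ/e\ZZ$ such that $(\a,i_{\a})\in R_{\EE}$, and by definition of $R_{\EE}$ this forces $\jiao{\a,y}+i_{\a}/e\in\ZZ$, hence $\jiao{\a,y}\in -i_{\a}/e+\ZZ$. Therefore
\begin{equation*}
m\jiao{\a,y}\in -(m/e)i_{\a}+m\ZZ\subset\ZZ,
\end{equation*}
where the inclusion uses the crucial fact $e\mid m$ (which is built into the setup of \S\ref{sss:outer class}, since $\th_{\Out}$ factors through $[m/e]:\mu_m\to\mu_e$). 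Combining the two bullet points gives $\jiao{\a,x-my}\in\ZZ$, whence $\jmath_x\in\xcoch(T/Z_M)$.

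For the final assertion, $\jmath_x\in\xcoch(T/Z_M)$ is a cocharacter $\Gm\to T/Z_M\subset M^{\ad}$, and its adjoint action on $\fkm$ produces the desired $\ZZ$-grading $\fkm_{*}$. (If one wishes to verify compatibility with the $\ZZ/m\ZZ$-grading on $\frg$, it reduces to checking, for each root space $\fkm(\a)=\frg^{i_{\a}}(\a)$, that its weight $\jiao{\a,\jmath_x}=\jiao{\a,x}-m\jiao{\a,y}$ is congruent mod $m$ to $\jiao{\a,x}+(m/e)i_{\a}$; this is exactly the same residue computation as above.) There is essentially no obstacle beyond bookkeeping; the only non-formal ingredient is the divisibility $e\mid m$, which is what makes the lattice $\xcoch(T/Z_M)$ absorb the fractional contribution $-(m/e)i_{\a}$.
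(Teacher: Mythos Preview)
Your proof is correct. It is a direct arithmetic verification: you characterize $\xcoch(T/Z_M)$ as the coweight lattice of $\Phi(M,T)$ inside $\fra_M$, and then check $\jiao{\a,x-my}\in\ZZ$ root by root, using $x\in\xcoch(T^{\ad})$ for the first term and the defining congruence $\jiao{\a,y}+i_\a/e\in\ZZ$ together with $e\mid m$ for the second. The compatibility remark at the end is also correct.

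The paper takes a different route. Rather than computing directly, it picks an alcove $F\in\Alc(\EE)$, invokes the splitting $(L^F,L^F_0,\frl^F,\frl^F_*)$ of the spiral $\frp^F_*$ constructed earlier in \S\ref{ss:spiral}, and observes that this splitting already furnishes a $\ZZ$-grading on $\fkm=\frl^F$. It then identifies the grading element with $\pi_M(x-my)=\jmath_x$, so integrality is inherited from the spiral construction rather than checked by hand.

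The trade-off is this: your argument is more elementary and self-contained, needing nothing beyond the definition of $R_\EE$ and the divisibility $e\mid m$. The paper's argument, on the other hand, simultaneously establishes that the $\ZZ$-grading defined by $\jmath_x$ \emph{coincides} with the one coming from the splitting $\L_T(\frp^F_*)$. That identification is exactly what makes the diagram \eqref{FPEM} commute, and the paper's proof gets it for free. With your approach one would still need to supply that comparison separately before \S\ref{sss:Gamma x inj}.
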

\begin{proof}
For $F\in \Alc(\EE)$, we denote $(L^{F}, L^{F}_{0}, \frl^{F},\frl^{F}_{*})$ by $(M,M_{0},\fkm,\fkm_{*})\in\frM^{\Zg}_{T}$. Let $y\in F$, and $\l_{y}=\e(x-my)$. Then by \S\ref{sss:LF} and \eqref{two M}, we have $M=G_{y}$. By definition, the $\ZZ$-grading on $\fkm=\frl^{F}$ is the restriction of the $\QQ$-grading $\e\l_{y}=x-my$ of $\frg$. Therefore, the $\ZZ$-grading on $\fkm$ is given by the image $\pi_{M}(\l_{y})=\pi_{M}(x-my)=m(\pi_{M}(x/m)-\pi_{M}(\EE))=\jmath_{x}$. This shows that $\jmath_{x}$ has integral weights on $\fkm$ and hence $\jmath_{x}\in \xcoch(T/Z_{M})$.
\end{proof}

\sss{}\label{sss:Gamma x inj} By Lemma \ref{l:j integral}, the assignment $\EE\mapsto (M=G_{\EE}, \jmath_{x})$ defines a map
$$ \Gamma_{x}: \frE\to \frM^{\Zg}_{T}.$$
We claim that this map is injective. In fact, if both $\EE$ and $\EE'\in \frE$ give the same $(M,M_{0},\fkm,\fkm_{*})\in \frM^{\Zg}_{T}$, then $\EE$ is parallel to $\EE'$ because they are both parallel to $\fra^{M}$. Moreover, the gradings on $\fkm$ induced by $m(\pi_{M}(x/m)-\pi_{M}(\EE))$ and $m(\pi_{M}(x/m)-\pi_{M}(\EE'))$ are the same, hence $\pi_{M}(\EE)=\pi_{M}(\EE')$. Therefore $\EE=\EE'$.

\sss{}\label{sss:adm sys} Recall that an {\em admissible system} is $(M,M_{0}, \fkm,\fkm_{*},\cL)$, where $(M,M_{0},\fkm,\fkm_{*})\in \frM^{\Zg}$ which lies in the image of $\L:\frP\to \frM^{\Zg}$ (see \cite[\S3.1(c)]{LY1}), and $\cL$ is an $M_{0}$-equivariant cuspidal local system on the open $M_{0}$-orbit $\mathring{\fkm}_{\y}$ of $\fkm_{\y}$ in the sense of \cite[\S4.4]{L-graded}. Our notation is slightly different from that in \cite[\S3.1]{LY1}, where $(M,M_{0}, \fkm,\fkm_{*},\cL^{\sh})$ is called an admissible system. Let $\frT_{\y}$ be the set of admissible systems, and let $\un\frT_{\y}=G_{\un0}\bs\frT_{\y}$. Forgetting the local system $\cL$ gives a map $\frT_{\y}\to \frM^{\Zg}$, and passes to the $G_{\un0}$-quotients $\un\frT_{\y}\to \un\frM^{\Zg}$.

\sss{} The constructions of the maps $\Delta_{x}, \Gamma_{x}$ and $\L_{T}$ gives a commutative diagram
\begin{equation}\label{FPEM}
\xymatrix{W_{x/m}\bs\frF\ar[r]^{\Delta_{x}}_{\sim}\ar[d] & W_{G_{\un0}}\bs\frP_{T}\ar[d]^{\L_{T}}\ar[r]_{\sim} & \un\frP\ar[d]^{\un\L}\\
W_{x/m}\bs\frE\ar@{^{(}->}[r]^{\Gamma_{x}} & W_{G_{\un0}}\bs\frM^{\Zg}_{T}\ar@{^{(}->}[r] & \un\frM^{\Zg}}
\end{equation}
Here we have used the isomorphism $W_{x/m}\cong W_{G_{\un0}}$, the injectivity of $\Gamma_{x}$ shown in \S\ref{sss:Gamma x inj}, Lemma \ref{l:frM T}, and the isomorphism proved in Lemma \ref{l:frP T}. The composition of the first row is given by $[F]\mapsto [\frp^{F}_{*}]$.

For $\xi\in\un\frT_{\y}$, let $\ov\xi\in \un\frM^{\Zg}$ its image. Let $\frM^{\Zg,\xi}_{T}, \frP^{\xi}_{T}, \frF^{\xi}$ and $\frE^{\xi}$ be the subsets of $\frM^{\Zg}_{T}, \frP_{T}, \frF$ and $\frE$ which is the preimage of $\ov\xi$ (these subsets only depends on $\ov\xi$, but we use the superscripts $\xi$ for simplicity of notation). The diagram \eqref{FPEM} restricts to a diagram
\begin{equation}\label{FPEM xi}
\xymatrix{W_{x/m}\bs\frF^{\xi}\ar[r]^{\Delta_{x}}_{\sim}\ar[d] & W_{G_{\un0}}\bs\frP^{\xi}_{T}\ar[d]^{\L_{T}}\ar[r]_{\sim} & \un\frP^{\xi}\ar[d]^{\un\L}\\
W_{x/m}\bs\frE^{\xi}\ar[r]^{\Gamma_{x}}_{\sim} & W_{G_{\un0}}\bs\frM^{\Zg,\xi}_{T}\ar[r]_{\sim} & \{\ov\xi\}}
\end{equation}

\begin{cor}\label{c:xi facets} Let $\xi\in\un\frT_{\y}$.
\begin{enumerate}
\item All elements in $\frF^{\xi}$ are in the same $\Wa$-orbit.
\item Let $\EE\in \frE^{\xi}$. Then the inclusion $\Alc(\EE)\subset \frF^{\xi}$ induces a bijection
\begin{equation*}
W_{x/m}(\EE)\bs \Alc(\EE)\isom W_{x/m}\bs \frF^{\xi}\cong \un\frP^{\xi}
\end{equation*}
where $W_{x/m}(\EE)$ is the image of $W_{x/m}\cap N_{\Wa}(W_{\EE})$ in $\Wa(\EE)\cong N_{\Wa}(W_{\EE})/W_{\EE}$.
\end{enumerate}
\end{cor}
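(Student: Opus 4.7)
The plan is to deduce both statements from the commutative diagram \eqref{FPEM xi} combined with the identification $\frF^{\xi}=\bigsqcup_{\EE'\in\frE^{\xi}}\Alc(\EE')$. I would first verify this identification by two observations: the map $\frF\to\frE$ sending $F\mapsto\Span(F)$ has fiber $\Alc(\EE')$ over $\EE'$ (since an $\EE'$-alcove is, by definition, a top-dimensional $\EE'$-facet of $\fra$, i.e., a facet $F\subset\EE'$ with $\Span(F)=\EE'$); and membership in $\frF^{\xi}$ depends only on $\Span(F)$ because the composition $\frF\to\un\frM^{\Zg}$ factors through $\frE\xrightarrow{\Gamma_{x}}\frM_{T}^{\Zg}$. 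The second essential input from \eqref{FPEM xi} is that $W_{x/m}$ acts transitively on $\frE^{\xi}$, read off from the bijection $W_{x/m}\bs\frE^{\xi}\cong\{\ov\xi\}$ in the bottom row.

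For (1), given $F,F'\in\frF^{\xi}$ with spans $\EE,\EE'$, transitivity supplies $w\in W_{x/m}\subset\Wa$ sending $\EE'$ to $\EE$, so after replacing $F'$ by $wF'$ I may assume $F,F'\in\Alc(\EE)$. Then Proposition \ref{p:rel Wa}(1) furnishes an element of $\Wa(\EE)$ taking $F'$ to $F$, and Proposition \ref{p:NW} lifts it to an element of $N_{\Wa}(W_{\EE})\subset\Wa$; composition with $w$ shows that $F$ and $F'$ are $\Wa$-conjugate.

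For (2), applying the orbit-stabilizer principle to the $W_{x/m}$-equivariant surjection $\frF^{\xi}\twoheadrightarrow\frE^{\xi}$ whose fiber over $\EE$ is $\Alc(\EE)$ yields
\[
W_{x/m}\bs\frF^{\xi}\;\cong\;\bigl(W_{x/m}\cap\Stab_{\Wa}(\EE)\bigr)\bs\Alc(\EE).
\]
By Proposition \ref{p:NW} the stabilizer equals $W_{x/m}\cap N_{\Wa}(W_{\EE})$, and since $W_{\EE}$ fixes $\EE$ pointwise it acts trivially on $\Alc(\EE)$, so the action descends to the image $W_{x/m}(\EE)$ of $W_{x/m}\cap N_{\Wa}(W_{\EE})$ in $\Wa(\EE)=N_{\Wa}(W_{\EE})/W_{\EE}$. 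Since $\Wa(\EE)$ acts simply transitively on $\Alc(\EE)$ (Proposition \ref{p:rel Wa}(1)), the descent does not alter the orbit space, and one obtains
\[
W_{x/m}(\EE)\bs\Alc(\EE)\;\isom\;W_{x/m}\bs\frF^{\xi}\;\cong\;\un\frP^{\xi},
\]
where the last isomorphism is the right-hand column of \eqref{FPEM xi}.

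The corollary is essentially a bookkeeping consequence of the diagram and the two cited propositions from \S\ref{ss:Rel Wa}, so there is no serious obstacle. The only step requiring attention is checking that the partition $\frF^{\xi}=\bigsqcup_{\EE'}\Alc(\EE')$ is $W_{x/m}$-equivariant, which holds because $\Span(\cdot)$ is $\Wa$-equivariant.
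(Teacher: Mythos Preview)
Your proof is correct and follows essentially the same approach as the paper's: both read off the transitivity of $W_{x/m}$ on $\frE^{\xi}$ from the diagram \eqref{FPEM xi}, then invoke Proposition~\ref{p:rel Wa}(1) together with the identification of $N_{\Wa}(W_{\EE})$ as the stabilizer of $\EE$ under $\Wa$. Your packaging via the fibering $\frF^{\xi}=\bigsqcup_{\EE'\in\frE^{\xi}}\Alc(\EE')$ and an explicit orbit--stabilizer step is slightly more detailed than the paper's, but the substance is identical; one small remark is that the invocation of simple transitivity at the end of~(2) is unnecessary, since the kernel of $W_{x/m}\cap N_{\Wa}(W_{\EE})\to W_{x/m}(\EE)$ lies in $W_{\EE}$ and hence already acts trivially on $\Alc(\EE)$.
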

\begin{proof}
By the diagram \eqref{FPEM xi}, $W_{x/m}\bs\frE^{\xi}$ is a singleton, i.e., the action of $W_{x/m}$ on $\frE^{\xi}$ is transitive. Therefore, for any $\EE\in\frE^{\xi}$, $\Alc(\EE)\to W_{x/m}\bs \frF^{\xi}$ is surjective. Since $N_{\Wa}(W_{\EE})$ is the stabilizer of $\EE$ under $\Wa$ (see the discussion before Proposition \ref{p:NW}), two facets in $\Alc(\EE)$ are in the same $W_{x/m}$-orbit if and only if they are in the same $W_{x/m}\cap N_{\Wa}(W_{\EE})$-orbit, which implies (2). 

If $A,A_{1}\in\frF^{\xi}$, then $\EE=\Span(A)$ and $\EE_{1}=\Span(A_{1})$ are both in $\frE^{\xi}$. Hence there exists $w\in W_{x/m}$ such that $w\EE=\EE_{1}$. Now both $A_{1}$ and $wA$ are $\EE_{1}$-alcoves. By Proposition \ref{p:rel Wa}(1),  there exists $w_{1}\in \Wa(\EE_{1})$ such that $w_{1}wA=A_{1}$. Hence $A_{1}$ is in the same $\Wa$-orbit as $A$.
\end{proof}

\sss{Comparison with \cite[\S10]{LY2}}\label{sss:comp} We explain the relation between the notation used here and that in \cite[\S10]{LY2}. Let $\EE\in\frE^{\xi}$ and $\Gamma_{x}(\EE)=(M,M_{0},\fkm,\fkm_{*})$. In \cite[\S10.1]{LY2}, $\fra^{M}$ was denoted by $\bE$, and we introduced a set of affine hyperplanes (denoted by $\frH_{\a,n,N}$) in it. To each $\vp\in \bE$ we attached an $\e$-spiral denoted by $\frp^{\un\vp}_{*}$ in \cite[\S10.2]{LY2}. We claim that there is an affine isomorphism $\t: \bE\cong \EE$, sending the set of hyperplanes $\{\frH_{\a,n,N}\}$ bijectively to $\frH(\EE)$, such that if $\vp\in \bE$ corresponds to $y\in \EE$, then $\frp^{\un\vp}_{*}=\frp^{\l_{y}}_{*}$.

First note that in the notation of \cite{LY2},  the grading on $\fkm$ is induced by $\y\iota/2\in \xcoch(T)_{\QQ}$, which is lies in $\xcoch(T\cap M^{\der})$ because $\iota$ it comes from a homomorphism $\phi: \SL_{2}\to M$. Therefore $\pi_{M}(\y\iota/2)=\jmath$. In particular, $\pi_{M}(x-\y\iota/2)/m=(\pi_{M}(x)-\jmath)/m$ therefore $(x-\y\iota/2)/m\in \EE$. Therefore the map $\vp\mapsto y=(x-\frac{\y}{2}(\iota+\vp))/m$ gives an affine isomorphism $\t: \bE=\fra^{M}\isom \EE$. If $\vp\mapsto y$ under this map, then $\l_{y}=\frac{\e\y}{2}(\iota+\vp)=\frac{|\y|}{2}(\iota+\vp)=\un\vp$, therefore $\frp^{\un\vp}_{*}=\frp^{\l_{y}}_{*}$. 

Under the isomorphism $\t$, the open subset $\bE'$ introduced in \cite[\S10.1]{LY2} (which is the complement of the hyperplanes $\frH_{\a,n,N}$) is characterized by the following property: for $\vp\in\bE$, $\frl^{\un\vp}_{*}=\fkm_{*}$ if and only if $\vp\in \bE'$ (see \cite[10.2(b)]{LY2}). On the other hand, the argument of Lemma \ref{l:spiral facet} shows that for $y\in \EE$, $\frl^{\l_{y}}_{*}=\fkm_{*}$ if and only if $G_{y}=M$, i.e., $y\in \EE'$. Therefore $\t(\bE')=\EE':=\EE-\cup_{H\in\frH(\EE)}H$, hence the set of hyperplanes $\{\frH_{\a,n,N}\}$ in $\bE'$ corresponds to $\frH(\EE)$ under $\t$. 

The following statement will not be used in the rest of the paper.

\begin{lemma}\label{l:comp W} Let $\EE\in\frE^{\xi}$ and $(M,M_{0},\fkm,\fkm_{*})=\Gamma_{x}(\EE)$. Let $\cW$ be the finite Weyl group introduced in \cite[\S10.11]{LY2} which acts faithfully on $\bE$. Then under the isomorphism $\t: \bE\cong \EE$ in \S\ref{sss:comp}, $\cW$ corresponds to the group $W_{x/m}(\EE)$ introduced in Corollary \ref{c:xi facets}.
\end{lemma}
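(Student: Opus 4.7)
The plan is to identify both $\cW$ and $W_{x/m}(\EE)$ with the same abstract quotient, namely the image in the group of affine isometries of $\EE$ (or equivalently of $\bE$ via $\t$) of the stabilizer in $W_{G_{\un 0}}$ of the graded \pL datum $(M,M_{0},\fkm,\fkm_{*}) = \Gamma_{x}(\EE)$, modulo the finite Weyl group $W_{M}=W_{\EE}$ which acts trivially on $\EE$ (and on $\bE=\fra^{M}$).

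First I would unravel the definition of $W_{x/m}(\EE)$. By construction it is the image in $\Wa(\EE)\cong N_{\Wa}(W_{\EE})/W_{\EE}$ of those $w\in W_{x/m}$ that stabilize $\EE$ setwise. Using the identification $W_{x/m}\cong W_{G_{\un 0}}$ from \S\ref{ss:Zm} and the injectivity of $\Gamma_{x}$ proved in \S\ref{sss:Gamma x inj}, such a $w$ stabilizes $\EE$ iff it normalizes $W_{\EE}=W_{M}$ (equivalently, stabilizes $M$) and preserves $\jmath_{x}\in\xcoch(T/Z_{M})$: indeed, $w$ already fixes $x/m$, so $\pi_{M}(w\EE)=w\pi_{M}(\EE)$ coincides with $\pi_{M}(\EE)$ iff $w$ fixes $\pi_{M}(\EE) = \pi_{M}(x/m)-\jmath_{x}/m$, iff $w$ fixes $\jmath_{x}$, and then $w\EE=\EE$ by injectivity of $\Gamma_{x}$. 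Consequently $W_{x/m}(\EE)$ is canonically $\Stab_{W_{G_{\un 0}}}(M,\fkm_{*})/W_{M}$, acting on $\EE$ by restriction of the affine action of $W_{G_{\un 0}}\subset\Wa$ on $\fra$.

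Second, I would revisit the construction of $\cW$ in \cite[\S10.11]{LY2}. There $\cW$ is built as the symmetry group of the hyperplane arrangement $\{\frH_{\a,n,N}\}$ on $\bE$ coming from elements of $W_{G_{\un 0}}$ that normalize the admissible pair $(M,\fkm_{*})$, modulo the subgroup $W_{M}$ that acts trivially on $\fra^{M}$. By inspection of the definition, $\cW$ is canonically isomorphic to the same quotient $\Stab_{W_{G_{\un 0}}}(M,\fkm_{*})/W_{M}$, acting on $\bE$ via the $W_{G_{\un 0}}$-action on $\fra$ restricted to $\fra^{M}$. Then I would verify that $\t$ intertwines the two actions: both come from restriction of a single $W_{G_{\un 0}}$-action on $\fra$, and the formula $\t(\vp) = (x - \tfrac{\y}{2}(\iota+\vp))/m$ differs from the obvious linear identification $\fra^{M} \cong \EE - \pi_{M}(\EE)$ by a translation involving $x/m$ and $\iota$, both of which are fixed by any $w\in\Stab_{W_{G_{\un 0}}}(M,\fkm_{*})$ (the latter since $\iota$ is intrinsic to the admissible system $\dot\xi$). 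Hence $\t$ is equivariant and the two groups coincide.

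The main obstacle will be the second step: extracting from \cite[\S10.11]{LY2} the precise stabilizer-quotient description of $\cW$ and matching normalization conventions (the factor $\y/2$, the role of $\iota$, the choice of base point on $\bE$) so that the two actions agree on the nose and not merely up to an a priori ambiguous affine isomorphism. Once that bookkeeping is carried out, the identification $\cW = W_{x/m}(\EE)$ follows formally, since a subgroup of the affine Coxeter group $\Wa(\EE)$ of Proposition \ref{p:rel Wa} is determined by its underlying set of affine isometries.
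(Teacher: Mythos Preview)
Your overall strategy coincides with the paper's: both identify $\cW$ and $W_{x/m}(\EE)$ with the same quotient of the subgroup of $W_{G_{\un 0}}\cong W_{x/m}$ that normalizes the graded datum $(M,M_{0},\fkm,\fkm_{*})$. The paper does this through the group $H'=N_{G_{\un 0}}(M,M_{0},\fkm,\fkm_{*})$ of \cite[\S10.9]{LY2}, using $\cW\cong H'/M_{0}$ and then passing to Weyl groups to establish
\[
N_{H'}(T)/T = W_{x/m}\cap N_{\Wa}(W_{\EE}),\qquad N_{M_{0}}(T)/T = W_{x/m}\cap W_{\EE}.
\]

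There is, however, a real slip in your denominator. You quotient by $W_{M}=W_{\EE}$, but $W_{\EE}$ is \emph{not} contained in $W_{x/m}=W_{G_{\un 0}}$ in general (only when $x/m\in\EE$), so your quotient $\Stab_{W_{G_{\un 0}}}(M,\fkm_{*})/W_{M}$ is not well-posed. The kernel of the action on $\EE$, among elements of $W_{x/m}$ stabilizing $\EE$, is $W_{x/m}\cap W_{\EE}$, which the paper identifies with $W_{M_{0}}=N_{M_{0}}(T)/T$, the Weyl group of $M_{0}$ rather than of $M$. Replace $W_{M}$ by $W_{M_{0}}$ throughout and your argument goes through.

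A second, related imprecision: your parenthetical ``normalizes $W_{\EE}=W_{M}$ (equivalently, stabilizes $M$)'' is misleading. Normalizing $W_{\EE}$ inside $\Wa$ is already equivalent to stabilizing $\EE$ (see the remark before Proposition~\ref{p:NW}), which is strictly stronger than stabilizing $M$ since $\Gamma:\frE\to\frM_{T}$ is only $\xcoch(T^{\ad})$-invariant, not injective. The correct statement you want (and essentially prove via injectivity of $\Gamma_{x}$) is: for $w\in W_{x/m}$, one has $w\EE=\EE$ if and only if $w$ stabilizes both $M$ and $\jmath_{x}$. Your check of $\t$-equivariance, including the observation that such $w$ fix $\iota$ (since $w$ normalizes $M$, fixes $\jmath_{x}=\pi_{M}(\y\iota/2)$, and $\iota\in\xcoch(T\cap M^{\der})_{\QQ}$), is a point the paper leaves implicit.
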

\begin{proof}
Recall the group $H'$ introduced in \cite[\S10.9]{LY2}: it is the normalizer of $(M,M_{0},\fkm,\fkm_{*})$ in $G_{\un0}$.  By \cite[\S10.9(e)]{LY2}, $M_{0}$ is the identity component of $H'$, and $H'/M_{0}\cong \cW$. Since $T$ is a maximal torus of $M_{0}$, we have $N_{H'}(T)/N_{M_{0}}(T)\cong H'/M_{0}\cong \cW$. Since $W_{x/m}(\EE)=(W_{x/m}\cap N_{\Wa}(W_{\EE}))/(W_{x/m}\cap W_{\EE})$, the desired isomorphism $\cW\cong W_{x/m}(\EE)$ follows from two canonical isomorphisms
\begin{eqnarray}
\label{W H'} N_{H'}(T)/T\cong W_{x/m}\cap N_{\Wa}(W_{\EE});\\
\label{W M0} N_{M_{0}}(T)/T\cong W_{x/m}\cap W_{\EE}.
\end{eqnarray}
Since $H',M_{0}$ are subgroups of $G_{\un0}$ containing $T$, both $N_{H'}(T)/T$ and $N_{M_{0}}(T)/T$ are subgroups of $W_{G_{\un0}}$, which is identified with $W_{x/m}$. The above isomorphisms will in fact be equalities of subgroups of $W_{x/m}$.

We show \eqref{W M0}. Since $M_{0}$ is the subgroup of $M$ that fixes the grading element $\jmath_{x}\in\fra_{M}$,  $N_{M_{0}}(T)/T$ is the subgroup of $N_{M}(T)/T=W_{\EE}$ that fixes $\jmath_{x}$ in $\fra_{M}$. Now suppose $w\in W_{\EE}$. By the definition of $\jmath_{x}$ in Lemma \ref{l:j integral}, $w$ fixes $\jmath_{x}$ if and only if $w(x/m)$ and $x/m$ have the same image under $\pi_{M}$. Since $w$ fixes $\EE$ pointwise, $w(x/m)$ and $x/m$ have the same orthogonal projection to $\EE$ as well. Therefore $w\in N_{M_{0}}(T)/T\subset W_{\EE}$ and if and only $w(x/m)=x/m$. 

We show \eqref{W H'}. Let $w\in N_{H'}(T)/T\subset  W_{x/m}$. Now we view $x/m$ as the origin of $\fra$, so that $w$ acts linearly on $\fra$. Let $\EE'$ be the affine subspace parallel to $\fra^{M}$ that passes through $x/m$. Since $w$ normalizes $M$, it normalizes $Z^{\circ}_{M}$, hence stabilizes $\EE'$. Since $w$ normalizes each graded piece of $\fkm$, it fixes the grading element $\jmath_{x}\in\fra_{M}$, and therefore fixes $\jiao{x/m}_{\EE}$ (the projection of $x/m$ to $\EE$). These imply that $w$ stabilizes $\EE$ (which is characterized as the affine subspace parallel to $\EE'$ and passing through $\jiao{x/m}_{\EE}$). Therefore $N_{H'}(T)/T\subset W_{x/m}\cap N_{\Wa}(W_{\EE})$. 

Conversely, if $w\in W_{x/m}$ also lies in $N_{\Wa}(W_{\EE})$, then $w$ stabilizes $\EE$. Again let  $\EE'$ be the affine subspace parallel to $\fra^{M}$ that passes through $x/m$, then $w$ stabilizes $\EE'$, hence induces an action $\ov w$ on the quotient $\fra_{M}=\fra/\EE'$. Let $M'=C_{G}(Z^{0}_{M})$, then $M'$ is the smallest Levi subgroup of $G$ containing $M$. Now $\ov w$ fixes both $\pi_{M}(\EE')=\pi_{M}(x/m)$ and $\pi_{M}(\EE)$ in $\fra_{M}$. Note that $\fra_{M}$ has a set of affine hyperplanes $\frH(\fra_{M})$ given by $\pi_{M}(H)$ for those $H\in\frH$ (whenver $\pi_{M}(H)$ is an affine hyperplane in $\fra_{M}$). The root system $\Phi(M,T)$ up to $\pm1$ can be identified with the linear parts of those $H\in \frH(\fra_{M})$ passing through $\pi_{M}(\EE)$. Since $\ov w$ preserves $\frH(\fra_{M})$ and fixes $\pi_{M}(\EE)$, it stabilizes $\Phi(M,T)$, hence any lift of $w$ to $N_{G_{\un0}}(T)$ stabilizes $M$. Since $\ov w$ fixes the grading element $\jmath_{x}\in\fra_{M}$ (because it fixes $\pi_{M}(x/m)$) for $\fkm$, it normalizes $(M,M_{0},\fkm,\fkm_{*})$. This finishes the proof of  \eqref{W H'}.
\end{proof}

The combination of Corollary \ref{c:xi facets}, \S\ref{sss:comp} and Lemma \ref{l:comp W} implies that $\un\frP^{\xi}$ is in natural bijection with $\cW\bs\Alc(\bE)$ (where $\Alc(\bE)$ as the set of alcoves in $\bE$ defined by the hyperplanes $\{\frH_{\a,n,N}\}$ in \cite[\S10]{LY2}). This is essentially a restatement of \cite[\S10.11(a)]{LY2}.

\sss{Spirals and parahoric subalgebras}\label{sss:parahoric} We clarify the relationship between spirals and parahoric subalgebras in the loop algebra of $\frg$. This discussion is independent of the rest of the paper. From $(\frg,\s)$, we may form the $\s$-twisted loop algebra $\wt\frg$ as in \cite[\S8.2]{Kac}: $\wt\frg=\cL(\frg,\s,e)$ is the fixed point subalgebra of $\frg\otimes_{k}k\lr{t^{1/e}}$ under the diagonal action $\mu_{e}\ni \z:X\otimes t^{n/e}\mapsto  \s(\z)\z^{-n}X\otimes t^{n/e}$. We can then talk about affine roots of $\wt\frg$ as in \cite{Kac}, which includes the real affine roots $\Phi_{\aff}$ as in \S\ref{sss:aff roots} and imaginary roots $\{n\d|n\in\ZZ-\{0\}\}$. Then $\wt\frg$ is the Lie algebra of a quasi-split form $\cG$ of $G$ over $F=k\lr{t}$, and $\fra$ is the apartment attached to the maximal split torus $T_{F}$ of $\cG$. By Bruhat-Tits theory, each point $y\in \fra$ defines a parahoric subalgebra $\wt\frg_{y,\ge0}$ of $\wt\frg$ which is the $t$-adic completion of the span of the affine root spaces $\wt\frg(\wt \a)$ such that $\wt\a(y)\ge0$ (including imaginary roots). For any $r\in\QQ$ we define $\wt\frg_{y,r}$ to be the direct sum of those affine root spaces $\wt\frg(\wt \a)$ such that $\wt\a(y)=r$. 

With the choice of $x\in \xcoch(T^{\ad})$ lifting $\ov x$, the equality \eqref{frgn} gives a canonical isomorphism
\begin{equation}\label{frgn as MP}
\frg_{\un n}\cong\wt\frg_{x/m, n/m},\quad\textup{ for }n\in\ZZ.
\end{equation}
We may reinterpret $(-1)$-spirals ${}^{-1}{\frp}^{\l}_{*}$  for the $\ZZ/m\ZZ$-graded Lie algebra $\frg$ using parahoric subalgebras of $\wt\frg$ as follows.  Under the isomorphism \eqref{frgn as MP}, we have for any $y\in \fra$
\begin{eqnarray*}
{}^{-1}\frp^{my-x}_{n}&=&\wt\frg_{x/m,n/m}\cap\wt \frg_{y,\ge0},\\
{}^{1}\frl^{x-my}_{n}={}^{-1}\frl^{my-x}_{n}&=&\wt\frg_{x/m,n/m}\cap \wt\frg_{y,0},\quad\textup{ for }n\in\ZZ.
\end{eqnarray*}


\section{The graded DAHA action on spiral inductions}\label{s:pf}

In this section we give the proof of Theorem \ref{th:main}. 

\subsection{Some preparation} 

\sss{} We keep the setup of \S\ref{ss:Zm}. In particular, we fix a maximal torus $T\subset G_{\un0}$; the $\ZZ/m\ZZ$-grading on $\frg$ is given by $\th_{\ov x}$ as in \eqref{th x} for an element $\ov x\in \xcoch(T^{\ad})\otimes\ZZ/m\ZZ$. We also fix a lifting $x\in \xcoch(T^{\ad})$ of $\ov x$.

\sss{Canonical graded DAHA} For any $A\in \frF^{\xi}$, we have defined an affine Coxeter group $\Wa^{A}$ and a graded DAHA $\HH_{c}(\Wa^{A})$ in \S\ref{ss:Rel Wa} and \S\ref{ss:DAHA}. By Corollary \ref{c:xi facets}, all facets $A\in\frF^{\xi}$ are in the same $\Wa$-orbit. By the discussion in \S\ref{sss:indep EMA} and \S\ref{sss:indep DAHA}, we may canonically identify the various $\Wa^{A}$, $\EE=\Span(A)$ and $\HH_{c}(\Wa^{A})$ for $A\in \frF^{\xi}$. We denote the resulting affine Coxeter group by $\Wa^{\xi}$ with simple reflections indexed by $I^{\xi}$ and reflection representation on a canonical $\QQ$-vector space $\EE^{\xi}_{\dm}$. We have a canonical linear function $\d: \EE^{\xi}_{\dm}\to\QQ$ and define $\EE^{\xi}=\d^{-1}(1)\subset \EE^{\xi}_{\dm}$.   The corresponding graded DAHA is denoted by $\HH_{c}(\Wa^{\xi})$. We define the specialization $\HH_{c,\nu}(\Wa^{\xi})=\HH_{c}(\Wa^{\xi})/(u+\nu, \d-1)$.

\sss{Spiral induction}\label{sss:sp ind} Let $A\in\frF^{\xi}$. We have two maps from the twisted product $X^{A}=G_{\un0}\twtimes{P^{A}_{0}}\frp^{A}_{\y}$ (see \S\ref{sss:tw})
\begin{equation}\label{spiral defn}
\xymatrix{\frg_{\un\y} && X^{A}=G_{\un0}\twtimes{P^{A}_{0}}\frp^{A}_{\y}\ar[ll]_{\a^{A}}\ar[rr]^{\b^{A}} && [\frl^{A}_{\y}/L^{A}_{0}]}
\end{equation}
Here $\a^{A}(g,v)=\Ad(g)v$ for $g\in G_{\un0}, v\in \frp^{A}_{\y}$; $\b^{A}(g,v)$ is the image of $v$ under $\frp^{A}_{\y}\to\frl^{A}_{\y}\to [\frl^{A}_{\y}/L^{A}_{0}]$.

By definition, there exists $g\in G_{\un0}$ such that $\Ad(g)(L^{A}, L^{A}_{0}, \frl^{A}, \frl^{A}_{*})=(M,M_{0}, \fkm,\fkm_{*})$. Then $\Ad(g)$ induces an isomorphism of stacks $\Ad(g): [\frl^{A}_{\y}/L^{A}_{0}]\cong[\fkm_{\y}/M_{0}]$, and the $L_{0}^{A}$-equivariant local system $\Ad(g)^{*}\cL$ on the open $L^{A}_{0}$-orbit $\mathring\frl^{A}_{\y}$ is defined. We claim that the isomorphism class of $\Ad(g)^{*}\cL$ is independent of the choice of $g$. In fact, for a different choice of $g'\in G_{\un0}$ such that $\Ad(g')(L^{A}, L^{A}_{0}, \frl^{A}, \frl^{A}_{*})=(M,M_{0}, \fkm,\fkm_{*})$, $g'g^{-1}\in G_{\un0}$ normalizes $(M,M_{0}, \fkm,\fkm_{*})$, i.e., $h=g'g^{-1}\in H'$ in the notation introduced in \cite[\S10.9]{LY2}. By \cite[\S10.10(a)]{LY2}, $\Ad(h)^{*}\cL\cong\cL$, therefore $\Ad(g')^{*}\cL\cong\Ad(g)^{*}\cL$.

The above discussion shows that there is a canonical cuspidal local system $\cL^{A}$ on the open $\mathring{\frl}^{A}_{\y}$ of $\frl^{A}_{\y}$ that correspond to $\cL$ under any isomorphism $(L^{A}, L^{A}_{0}, \frl^{A},\frl^{A}_{*})\cong (M,M_{0},\fkm,\fkm_{*})$ induced by an element in $G_{\un0}$. The spiral induction from $\frp^{A}_{*}$ and $\cL$ is 
\begin{equation*}
\bI_{A}:=\bI_{\frp^{A}_{*}}=\a^{A}_{!}\b^{A,*}\cL^{A,\sh}\in \cD_{G_{\un0}}(\frg^{nil}_{\un\y})_{\xi}.
\end{equation*}

\sss{}\label{sss:sp class} If $\frp_{*},\frp'_{*}\in \frP^{\xi}$ are conjugate by an element $g\in G_{\un0}$, then $g$ induces an isomorphism $\bI_{\frp_{*}}\cong \bI_{\frp'_{*}}$. This isomorphism is independent of the choice of $g$ for when $\frp_{*}=\frp'_{*}$, $g$ must lie in $P_{0}=e^{\frp_{0}}$ which then induces the identity automorphisms on $\bI_{\frp_{*}}$.  Therefore, for a $G_{\un0}$-orbit $[\frp_{*}]\in \un\frP^{\xi}$, we have a {\em canonical}  object $\bI_{[\frp_{*}]}\in \cD_{G_{\un 0}}(\frg^{nil}_{\un\y})_{\xi}$. 

Since $W_{x/m}\bs\frF^{\xi}\cong\un\frP^{\xi}$, we see that $\bI_{A}=\bI_{\frp^{A}_{*}}$ only depends on the $W_{x/m}$-orbit of $A\in\frF^{\xi}$. For $[A]\in W_{x/m}\bs\frF^{\xi}$, we define $\bI_{[A]}$ to be $\bI_{A}$ for any $A\in[A]$.

\sss{Action of the torus $\Grot$}\label{sss:Grot more}
For any $F\in\frF$, we define an action of $\Grot$ on the $\e$-spiral $\frp^{F}_{*}$ in the following way. For any root space $\frg^{i}(\a)\subset \frp^{F}_{n}$, let $\Grot$ act on it by weight $e(n-\jiao{\a,x})/m$. Under this definition, the $\Grot$-action on $\frl^{F}$ is consistent with the definition in \S\ref{ss:Grot}.

We also define the action of $\Grot$ on $\frg_{\un0}$ (resp. $\frg_{\un\y}$) in a similar way: it acts on a root space $\frg^{i}(\a)$ by weight $-e\jiao{\a,x/m}$ (resp. $e(\y-\jiao{\a,x})/m$). Note that the $\Grot$-action on $\frg_{\un\y}$ depends on $\y$, and not just $\un\y$.

\sss{Action of the torus $\Gmb$}\label{sss:Gmb}
Consider the homomorphism
\begin{eqnarray*}
s:\Gm&\to& T^{\ad}\times\Grot\times\Gdil\\
t&\mapsto& (x(t),t^{m/e}, t^{-\y}).
\end{eqnarray*}
We denote its image by $\Gmb$.

For each facet $F\in\frF$, $T^{\ad}\times\Grot\times\Gdil$ acts on $P^{F}_{0}, L^{F}$ and $L^{F}_{0}$ with the trivial action of $\Gdil$; it also acts on the vector spaces $\frp^{F}_{*}$ and $\frl^{F}_{*}$ where $\Gdil$ acts by scaling. Therefore $\Gmb$ acts on $P^{F}_{0}, L^{F}, \frp^{F}_{*}$ and $\frl^{F}_{*}$. Note that the adjoint action of $L^{F}$ on $\frl^{F}$ is still $\Gmb$-equivariant. Similarly, $T^{\ad}\times\Grot\times\Gdil$ acts on $G_{\un0}$ and $\frg_{\un\y}$ (with the action of $\Grot$ defined in \S\ref{sss:Grot more}), hence $\Gmb$ also acts on $G_{\un0}$ and $\frg_{\un\y}$.

A direct calculation shows that
\begin{lemma}\label{l:Gmb action}
\begin{enumerate}
\item The torus $\Gmb$ acts trivially on $G_{\un0}$, $\frg_{\un\y}$, $P^{F}_{0}$ and $L^{F}_{0}$.
\item For any $n\in\ZZ$, the torus $\Gmb$ acts on $\frp^{F}_{n}$ and $\frl^{F}_{n}$ by weight $n-\y$.
\end{enumerate}
\end{lemma}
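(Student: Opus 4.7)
The plan is to verify both parts by a direct weight computation. The composition $s(t)=(x(t),t^{m/e},t^{-\y})$ lands in $T^{\ad}\times\Grot\times\Gdil$, so on any representation the induced $\Gm$-weight is simply the sum of the three factor contributions. All requisite inputs are recorded in \S\ref{ss:Grot} and \S\ref{sss:Grot more}; no new idea is needed, and essentially only the bookkeeping of exponents has to be checked.

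Fix a root space $\frg^{i}(\a)\subset\frg_{\un n}$; by \eqref{frgn} this imposes $\jiao{\a,x}/m+i/e\in n/m+\ZZ$. The factor $x(t)\in T^{\ad}$ acts by $t^{\jiao{\a,x}}$. The factor $t^{m/e}\in\Grot$ acts on $\frg^{i}(\a)\subset\frp^{F}_{n}$ by the prescribed weight $e(n-\jiao{\a,x})/m$, i.e.\ by $t^{n-\jiao{\a,x}}$; the exponent is indeed an integer, since multiplying the relation above by $e$ gives $e\jiao{\a,x}/m+i\in en/m+\ZZ$, and $i\in\ZZ$ forces $e(n-\jiao{\a,x})/m\in\ZZ$. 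The factor $t^{-\y}\in\Gdil$ scales each $\frp^{F}_{n}$ with weight $1$ and therefore contributes $t^{-\y}$. Summing the three exponents gives total weight $n-\y$, which is (2) for $\frp^{F}_{n}$; the identical calculation on $\frl^{F}_{n}\subset\frg_{\un n}$ gives (2) for $\frl^{F}_{n}$.

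For (1), the same calculation applied with $n=\y$ gives total weight $\y-\y=0$ on each root space of $\frg_{\un\y}$, hence $\Gmb$ fixes all of $\frg_{\un\y}$. For the group $G_{\un0}$, I would run the $n=0$ version with $\Gdil$ acting trivially (as $\Gdil$ is set up to act by scaling on the vector spaces $\frp^{F}_{*},\frl^{F}_{*}$ but trivially on groups): $x(t)$ contributes $t^{\jiao{\a,x}}$ on $\frg^{i}(\a)\subset\frg_{\un0}$, while $t^{m/e}\in\Grot$ acts by the weight $-e\jiao{\a,x/m}=-\jiao{\a,x}$ from \S\ref{ss:Grot}, contributing $t^{-\jiao{\a,x}}$, so the two cancel on every root space. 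Combined with the trivial actions of $x(t)$ and $\Grot$ on $T$, this shows that $\Gmb$ acts trivially on the Lie algebra $\frg_{\un0}$ and on $T$. By connectedness, $\Gmb$ therefore acts trivially on $G_{\un0}$, and the $\Gmb$-stable subgroups $P^{F}_{0}$ and $L^{F}_{0}$ inherit the trivial action. I do not anticipate a genuine obstacle; the only care needed is the integrality check of the $\Grot$-weights above and the distinction that $\Gdil$ acts trivially on groups but by scaling on graded vector spaces, both of which are built into the setup of \S\ref{sss:Gmb}.
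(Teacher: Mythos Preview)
Your proof is correct and is exactly the direct calculation the paper alludes to (the paper gives no proof beyond ``A direct calculation shows that''). One small wording issue: in the $G_{\un0}$ computation you write ``$t^{m/e}\in\Grot$ acts by the weight $-e\jiao{\a,x/m}=-\jiao{\a,x}$''; the $\Grot$-weight is $-e\jiao{\a,x}/m$, and it is only after composing with $t\mapsto t^{m/e}$ that the exponent becomes $-\jiao{\a,x}$, so the equality sign there is slightly misleading even though your conclusion $t^{-\jiao{\a,x}}$ is right.
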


\sss{Enhanced spiral induction}\label{sss:JA} Let $A\in \frF^{\xi}$. The diagram \eqref{spiral defn} defining the spiral induction consists of $\Grot\times\Gdil$-equivariant maps. Therefore we have an enhancement of the spiral induction $\bI_{A}$ into a $G_{\un0,\dm}\times\Gdil=(G_{\un0}\rtimes\Grot)\times\Gdil$-equivariant object
\begin{equation*}
\bJ_{A}=\a^{A}_{!}\b^{A,*}\cL^{A,\sh}\in \cD_{G_{\un0,\dm}\times\Gdil}(\frg_{\un\y}).
\end{equation*}

\subsection{Localization to $\Gm$-fixed points}\label{ss:loc} 

\sss{} Let $X$ be a $k$-scheme of finite type with an action of $\Gm$. Let $Y$ be another $k$-scheme of finite type and let $f:X\to Y$ be a $\Gm$-invariant morphism. Let $\ov{f}: [X/\Gm]\to Y$ be the induced map. Let $i:X^{\Gm}\incl X$ be the closed embedding of the fixed points, and let $f_{0}: X^{\Gm}\to Y$ be the restriction of $f$ to the fixed point locus $X^{\Gm}$.  

Let $\cK\in \cD_{\Gm}(X)$ be a constructible $\Gm$-equivariant $\Qlbar$-complex on $X$, also viewed as a complex on $[X/\Gm]$. The direct image complex $ \ov{f}_{*}\cK$ is bounded from below on $Y$. Let $\cK_{0}=i^{*}\cK$.

\sss{} The equivariant parameter $v\in \upH^{2}_{\Gm}(\pt)$ induces a map $v:  \ov{f}_{*}\cK\to  \ov{f}_{*}\cK[2]$. Passing to perverse cohomology sheaves we get a map $v: \pH^{n} \ov{f}_{*}\cK\to \pH^{n+2} \ov{f}_{*}\cK$. 

Restricting to fixed points gives a map 
$$i^{*}: \ov{f}_{*}\cK\to  f_{0,*}\cK_{0}\otimes\upH^{*}_{\Gm}(\pt)=\oplus_{\ell\in\ZZ_{\geq0}} f_{0,*}\cK_{0}[-2\ell]v^{\ell}.$$
Passing to perverse cohomology sheaves we get a map
\begin{equation*}
\pH^{n}(i^{*}): \pH^{n} \ov{f}_{*}\cK\to \oplus_{\ell\in\ZZ_{\geq0}}\pH^{n-2\ell} f_{0,*}\cK_{0}v^{\ell}
\end{equation*}
Specializing $v$ to a nonzero value $v_{0}\in \Qlbar^{\times}$, we get
\begin{equation*}
\pH^{n}(i^{*})_{v=v_{0}}: \pH^{n} \ov{f}_{*}\cK\to \oplus_{\ell\in\ZZ_{\geq0}}\pH^{n-2\ell} f_{0,*}\cK_{0}\subset\begin{cases}\pH^{\odd} f_{0,*}\cK_{0}, & n \textup{ odd;} \\ \pH^{\ev} f_{0,*}\cK_{0}, & n \textup{ even.} \end{cases}
\end{equation*}
 
\begin{lemma}\label{l:loc} Suppose $f$ is proper. Let $v_{0}\in \Qlbar^{\times}$. If $n\in\ZZ$ is sufficiently large (depending on $f$ and $\cK$), the map $v: \pH^{n} \ov{f}_{*}\cK\to \pH^{n+2} \ov{f}_{*}\cK$ is an isomorphism, and the map $\pH^{n}(i^{*})_{v=v_{0}}$ gives an isomorphism 
\begin{equation*}
\pH^{n} \ov{f}_{*}\cK\isom\begin{cases}\pH^{\odd} f_{0,*}\cK_{0}, & n \textup{ odd;} \\ \pH^{\ev} f_{0,*}\cK_{0}, & n \textup{ even.} \end{cases}
\end{equation*}
\end{lemma}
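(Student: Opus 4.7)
The plan is to argue via equivariant localization, isolating the contribution from the open complement $U := X - X^{\Gm}$. Let $i: Z := X^{\Gm} \hookrightarrow X$ and $j: U \hookrightarrow X$ denote the closed and open embeddings. The standard triangle $j_! \cK|_U \to \cK \to i_* \cK_0 \xr{+1}$ on $[X/\Gm]$, pushed forward along $\bar f$, yields a distinguished triangle on $Y$
\begin{equation*}
\bar f_*(j_! \cK|_U) \longrightarrow \bar f_* \cK \xr{i^*} \bar f_{0,*} \cK_0 \xr{+1}.
\end{equation*}
Both assertions of the lemma will be extracted from the associated long exact sequence of perverse cohomology sheaves on $Y$.

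The crucial step is to show that $\pH^n \bar f_*(j_! \cK|_U) = 0$ for $|n|$ sufficiently large. Since $Z$ is the \emph{full} $\Gm$-fixed locus, $\Gm$ acts on $U$ with only finite stabilizers, so $[U/\Gm]$ is a Deligne--Mumford stack of finite type over $k$. Working with $\Qlbar$-coefficients in characteristic zero, and using that $\upH^*(BF;\Qlbar) = \Qlbar$ for any finite group $F$, pushforward along the coarse moduli map $[U/\Gm] \to U/\Gm$ preserves bounded complexes. Combined with the fact that $\cK|_U$ is bounded and that the induced map $U/\Gm \to Y$ is of finite type, $\bar f_*(j_! \cK|_U) = \bar f_!(j_! \cK|_U)$ (equality by properness of $f$) is a bounded complex on $Y$, so its perverse cohomology vanishes outside a range $[-N, N]$.

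The remaining steps are formal. For $n > N$, the long exact sequence produces an isomorphism $\pH^n(i^*): \pH^n \bar f_* \cK \isom \pH^n \bar f_{0,*} \cK_0 = \bigoplus_{\ell \ge 0} \pH^{n - 2\ell} f_{0,*} \cK_0 \cdot v^\ell$. Since $f_{0,*} \cK_0$ is bounded, for $n$ larger than the top degree $b$ of its perverse cohomology, the sum is indexed by those $\ell$ with $n - 2\ell \in [a,b]$ (where $[a,b]$ bounds the range of nonvanishing of $\pH f_{0,*} \cK_0$); after setting $v = v_0 \in \Qlbar^\times$ the $v^\ell$-factors become nonzero scalars, and the sum collapses to $\pH^{\ev} f_{0,*} \cK_0$ or $\pH^{\odd} f_{0,*} \cK_0$ according to the parity of $n$, giving the second claim. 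For the first, $v$ on $\pH^n \bar f_{0,*} \cK_0$ shifts $\ell \mapsto \ell + 1$; it is automatically injective and its cokernel in $\pH^{n+2} \bar f_{0,*} \cK_0$ is the $\ell = 0$ summand $\pH^{n+2} f_{0,*} \cK_0$, which vanishes for $n + 2 > b$. By compatibility of $\pH^*(i^*)$ with the $\upH^*_\Gm(\pt)$-module structure, this transfers to an isomorphism $v: \pH^n \bar f_* \cK \isom \pH^{n+2} \bar f_* \cK$ for $n$ large.

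The main obstacle is the boundedness of $\bar f_*(j_! \cK|_U)$; conceptually, this is the classical fact that the equivariant cohomology of a $\Gm$-space with no fixed points is $v$-torsion as a $\Qlbar[v]$-module, and here the finite-stabilizer condition upgrades torsion to genuine boundedness. Once this is in hand, everything else is routine manipulation with the long exact sequence.
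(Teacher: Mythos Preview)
Your proof is correct and mirrors the paper's: both invoke the open--closed triangle on $[X/\Gm]$, use that $\Gm$ acts on $U = X \setminus X^{\Gm}$ with finite stabilizers to conclude that the first term has bounded perverse cohomology, and then combine this with the boundedness of $f_{0,*}\cK_0$ to read off both assertions from the long exact sequence (the $v$-isomorphism via the compatibility of $i^*$ with the $\upH^*_{\Gm}(\pt)$-action, exactly as you do). The paper states the boundedness of the open contribution more tersely, while you spell out the Deligne--Mumford reasoning; the substance is identical.
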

\begin{proof} Let $U=X-X^{\Gm}$ and let $\ov{g}: [U/\Gm]\to Y$ be the restriction of $\ov{f}$. Let $\cK_{U}$ be the restriction of $\cK$ to $[U/\Gm]$. Since $\Gm$ has no fixed point on $U$, it acts on $U$ with finite stabilizers, hence $ \ov{g}_{!}\cK_{U}$ is a constructible complex on $Y$  bounded both in cohomological degrees and perverse cohomological degrees. In particular, there exists $N_{1}\in\ZZ$ such that $\pH^{n} \ov{g}_{!}\cK_{U}=0$ for $n\ge N_{1}$. The exact triangle $ \ov{g}_{!}\cK_{U}\to  \ov{f}_{*}\cK\to  \ov{f}_{0,*}\cK_{0}\to $ (using that $f$ is proper) gives a long exact sequence on the level perverse cohomology sheaves, which implies that for $n\ge N_{1}$, the restriction map $\pH^{n}(i^{*})$ is an isomorphism.

On the other hand, $ f_{0,*}\cK_{0}$ also has bounded perverse cohomological degrees. Let $N_{2}\in\ZZ$ be such that $\pH^{n} f_{0,*}\cK_{0}=0$ for $n\ge N_{2}$. Then for $n\ge N_{2}$, $\oplus_{\ell\in\ZZ_{\geq0}}\pH^{n-2\ell} f_{0,*}\cK_{0}v^{\ell}$ is equal to either $\pH^{\odd} f_{0,*}\cK_{0} $ or $\pH^{\ev} f_{0,*}\cK_{0}$ according to the parity of $n$. In conclusion, when $n\ge \max\{N_{1},N_{2}\}$, $\pH^{n}(i^{*})$ gives the desired isomorphism between $\pH^{n} \ov{f}_{*}\cK$ and $\pH^{\odd} f_{0,*}\cK_{0} $ or $\pH^{\ev} f_{0,*}\cK_{0}$ according to the parity of $n$. 

Multiplication by $v$ gives a commutative diagram
\begin{equation*}
\xymatrix{\pH^{n} \ov{f}_{*}\cK\ar[rr]^(.4){\pH^{n}(i^{*})}\ar[d]^{v} && \oplus_{\ell\in\ZZ_{\geq0}}\pH^{n-2\ell} f_{0,*}\cK_{0}v^{\ell}\ar[d]^{v}\\
\pH^{n+2} \ov{f}_{*}\cK\ar[rr]^(.4){\pH^{n+2}(i^{*})} && \oplus_{\ell\in\ZZ_{\geq0}}\pH^{n+2-2\ell} f_{0,*}\cK_{0}v^{\ell}}
\end{equation*}
For $n\ge N_{2}$, the right vertical map is obviously an isomorphism. Therefore for $n\ge \max\{N_{1},N_{2}\}$, the left vertical map is also an isomorphism.
\end{proof}

\subsection{Action of the polynomial part}\label{ss:poly}
In this subsection we construct an action of the polynomial part 
\begin{equation*}
\SS^{\xi}_{\y/m}:=\QQ[u]\otimes\Sym(\EE^{\xi,*}_{\dm})/(u+\y/m,\d-1)
\end{equation*}
of $\HH_{c,\y/m}(\Wa^{\xi})$ on $\pH \bI_{A}$ for each $A\in\frF^{\xi}$.

In the rest of the subsection, we fix $A\in\frF^{\xi}$ and let $\EE=\Span(A)$. We identify $\EE^{\xi}_{\dm}$ with $\EE_{\dm}$, hence $\SS^{\xi}_{\y/m}\cong\QQ[u]\otimes\Sym(\EE^{*}_{\dm})/(u+\y/m,\d-1)$. 

\sss{}\label{sss:poly} We first construct an action of $\QQ[u]\otimes\Sym(\EE^{*}_{\dm})$ on the enhanced spiral induction $\bJ_{A}$ defined in \S\ref{sss:JA}.

Let $\cC^{A}$ be the cuspidal $L^{A}$-equivariant local system on the nilpotent orbit $\cO_{\frl^{A}}$ of $\frl^{A}$ containing $\mathring{\frl}^{A}_{\y}$ whose restriction to $\mathring{\frl}^{A}_{\y}$ gives $\cL^{A}$ (this is well-defined, by \cite[Prop 4.2(c)]{L-graded}). Clearly $\cC^{A}$ is $L^{A}_{\dm}\times\Gdil$-equivariant. We view the middle extension $\cC^{A,\sh}$ as an object in  $\cD_{L^{A}_{\dm}\times\Gdil}(\frl^{A})$, therefore it carries an action of $\upH^{*}_{L^{A}_{\dm}\times\Gdil}(\pt)$.

Let $L^{A,\ab}=L^{A}/L^{A,\der}$.  Let $L^{A,\ab}_{\dm}=L^{A}_{\dm}/L^{A,\der}$, then we have an exact sequence $1\to L^{A,\ab}\to L^{A,\ab}_{\dm}\to \Grot\to1$. Lemma \ref{l:E dm} implies a canonical isomorphism
\begin{equation*}
\xcoch(L^{A,\ab}_{\dm})_{\QQ}\cong\xcoch(Z^{0}_{L^{A}_{\dm}})_{\QQ}=\EE_{\dm}.
\end{equation*}
Hence, $\cC^{A,\sh}$ carries an action of 
$$\upH^{*}_{L^{A,\ab}_{\dm}\times\Gdil}(\pt)\cong \Qlbar[u]\otimes\Sym(\EE^{*}_{\dm}\otimes\Qlbar).$$
Here we denote by $u$ the canonical generator of $\xch(\Gdil)$, which gives a basis for $\upH^{2}_{\Gdil}(\pt)$.

Consider the $\Grot\times\Gdil$-equivariant map
\begin{equation*}
\eta^{A}: G_{\un0}\twtimes{P^{A}_{0}}\frp^{A}_{\y}\to [\frl^{A}_{\y}/L^{A}_{0}]\to [\frl^{A}/L^{A}].
\end{equation*}
We have $\bJ_{A}=\a^{A}_{!}\b^{A,*}\cL^{A,\sh}\cong \a^{A}_{!}\eta^{A,*}\cC^{A,\sh}\in \cD_{G_{\un0,\dm}\times\Gdil}(\frg_{\un\y})$. Therefore, $\bJ_{A}$ also carries an action of $\QQ[u]\otimes\Sym(\EE^{*}_{\dm})$.  By construction, $u$ acts on $\bJ_{A}$ by cupping with the equivariant parameter of $\Gdil$; $\d/e\in \EE^{*}_{\dm}$ acts on $\bJ_{A}$ by cupping with the equivariant parameter of $\Grot$.

\sss{}\label{sss:poly action} Now we construct an action of $\SS^{\xi}_{\y/m}$ on $\pH \bI_{A}$. 

Choose $r\geq 1$ such that $rx\in\xcoch(T)$. We denote by $\tGmb$ the one-dimensional torus equipped with the map
\begin{eqnarray}\label{wt s}
\wt s: \tGmb&\to& T\times \Grot\times\Gdil\\
\notag t&\mapsto& ((rx)(t), t^{mr/e}, t^{-r\y})
\end{eqnarray}
i.e., $\wt s$ is the composition of the embedding $s$ of $\Gmb$ (see \S\ref{sss:Gmb}) with the $r$th power map. We have a homomorphism 
$$G_{\un0}\times \tGmb\to G_{\un0,\dm}\times\Gdil$$ 
which is the natural embedding $G_{\un0}\incl G_{\un0,\dm}$ on $G_{\un0}$ and the homomorphism $\tGmb\xr{\wt s}T\times\Grot\times\Gdil\subset G_{\un0,\dm}\times\Gdil$. This map is well-defined because the images of $G_{\un0}$ and $\tGmb$ in $G_{\un0,\dm}\times\Gdil$ commute with each other by Lemma \ref{l:Gmb action}(1).

We have the forgetful functor for equivariant derived categories
\begin{equation*}
\cD_{G_{\un0,\dm}\times\Gdil}(\frg_{\un\y})\to \cD_{G_{\un0}\times \tGmb}(\frg_{\un\y}).
\end{equation*}
Let $\bJ^{\flat}_{A}\in \cD_{G_{\un0}\times \tGmb}(\frg_{\un\y})$ be the image of $\bJ_{A}$. Let 
\begin{equation*}
\bS_{v}=(\QQ[u]\otimes\Sym(\EE^{*}_{\dm}))/(u+\y\d/m).
\end{equation*}
We denote the image of $-u$ and  $\y\d/m$ in $\bS_{v}$ by $v$. Since $\QQ[u]\otimes\Sym(\EE^{*}_{\dm})$ acts on $\bJ_{A}$, $\bS_{v}$ acts on $\bJ^{\flat}_{A}$ with $v$ acting as a nonzero element in $\upH^{2}_{\tGmb}(\pt)$.
Consider the morphism 
$$\om: [\frg_{\un\y}/(G_{\un0}\times \tGmb)]\cong [\frg_{\un\y}/G_{\un0}]\times[\pt/\tGmb]\to [\frg_{\un\y}/G_{\un0}].$$ 
The first isomorphism follows from the fact that $\tGmb$ acts trivially on $\frg_{\un\y}$ (Lemma \ref{l:Gmb action}(1)). We then have the direct image functor
\begin{equation*}
\om_{*}: \cD_{G_{\un0}\times \tGmb}(\frg_{\un\y})\to \cD_{G_{\un0}}(\frg_{\un\y})
\end{equation*}
and its left adjoint $\om^{*}$. By Lemma \ref{l:Gmb action}, $\Gmb$ acts trivially on $\frg_{\un\y}, G_{\un0}\twtimes{P^{A}_{0}}\frp^{A}_{\y}$ and $[\frl^{A}_{\y}/L^{A}_{0}]$, therefore $\bJ^{\flat}_{A}\cong \om^{*}\bI_{A}$. Hence
$$\om_{*}\bJ^{\flat}_{A}\cong \om_{*}\om^{*}\bI_{A}\cong \bI_{A}\otimes\upH^{*}_{\tGmb}(\pt)\cong \bI_{A}\otimes\Qlbar[v].$$
Therefore, $\bS_{v}$ acts on $\bI_{A}\otimes\Qlbar[v]$ in an $\QQ[v]$-linear way. Any degree 2 element $v\in \bS_{v}$, i.e., $v\in(\QQ u\oplus \EE^{*}_{\dm})/\QQ(u+\y\d/m)$ induces a map
\begin{equation*}
\pH^{n}(v): \pH^{n}(\bI_{A}\otimes\Qlbar[v])\to \pH^{n+2}(\bI_{A}\otimes\Qlbar[v])
\end{equation*}
For $n$ even and sufficiently large, the map defined by specializing $v$ to $\y/m$
\begin{equation*}
\pH^{n}(\bI_{A}\otimes\Qlbar[v])=\oplus_{i\ge0}\pH^{n-2i}\bI_{A}\cdot v^{i}\xr{v\mapsto \y/m}\oplus_{i\ge0}\pH^{n-2i}\bI_{A}\subset \pH^{\ev}\bI_{A}
\end{equation*}
is an isomorphism. Therefore $\pH^{n}(v)$ defines an endomorphism of $\pH^{\ev}\bI_{A}$, under which $v=-u=\y\d/m$ acts as $\y/m$. Similarly, for $n$ odd and sufficiently large, $\pH^{n}(v)$ defines an endomorphism of $\pH^{\odd}\bI_{A}$ under which $v=-u=\y\d/m$ acts as $\y/m$. We have thus constructed an action of $\bS_{v}/(v-1)\cong \SS^{\xi}_{\y/m}$ on $\pH \bI_{A}$ preserving its $\ZZ/2\ZZ$-grading. 

The above construction uses an auxiliary number $r$ (to define a lifting of $x$ to a cocharacter of $T$), but the resulting action is clearly independent of this choice. Moreover, if $A_{1}$ is in the same $W_{x/m}$-orbit of $A$, the action of $\SS^{\xi}_{\y/m}$ on $\bI_{A_{1}}$ and on $\bI_{A}$ are the same, under the canonical isomorphism $\bI_{A_{1}}\cong \bI_{A}$. Therefore, for $[A]\in W_{x/m}\bs \frF^{\xi}$, we have a well-defined action of $\SS^{\xi}_{\y/m}$ on $\bI_{[A]}$.

\sss{Spectra of the polynomial ring action} Recall $A\in\frF^{\xi}$ and $\EE=\Span(A)$. The orthogonal projection of $x/m$ onto $\EE$ gives a point $\jiao{x/m}_{\EE}\in \EE$. Under the canonical isomorphism $\iota_{A}: \EE\cong \EE^{\xi}$ (depending on $A$), we denote the image of $\jiao{x/m}_{\EE}$ by $\jiao{x/m}_{A}$. Note that $\jiao{x/m}_{A}\in\EE^{\xi}$ depends only on the $W_{x/m}$-orbit of $A$, therefore we may denote it by $\jiao{x/m}_{[A]}$.

We have constructed an action of $\SS^{\xi}_{\y/m}$ on $\pH\bI_{[A]}$. Then the subscheme $\Spec \SS^{\xi}_{\y/m}\subset\AA^{1}_{u}\times \EE^{\xi}_{\dm}$ is defined by the equations $u=-\y/m$ and $\d=1$. Therefore the projection $\Spec \SS^{\xi}_{\y/m}\to  \EE^{\xi}_{\dm}$ gives an isomorphism of affine spaces over $\QQ$
\begin{equation*}
\Spec\SS^{\xi}_{\y/m}\isom\EE^{\xi}.
\end{equation*}

\begin{prop}\label{p:gen eigen} The action of $\SS^{\xi}_{\y/m}$ on $\pH\bI_{[A]}$ has a single eigenvalue given by $\jiao{x/m}_{[A]}\in \EE^{\xi}\cong \Spec\SS^{\xi}_{\y/m}$.
\end{prop}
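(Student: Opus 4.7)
The plan is to show each $f\in\EE^{\xi,*}_{\dm}$ acts on $\pH\bI_A$ by the scalar $f(\jiao{x/m}_A+d)$, where I view $\jiao{x/m}_A\in\EE^\xi$ inside $\EE^\xi_{\dm}$ via the inclusion $\EE^\xi=\d^{-1}(1)$. Because $\SS^\xi_{\y/m}$ is generated as a $\QQ$-algebra by $\EE^{\xi,*}_{\dm}$ modulo $u+\y/m=0$ and $\d=1$, this suffices (and in fact yields a strict eigenspace, not merely a generalized one). Using $\iota_A$ I identify $\EE^\xi\cong\EE$, so the target becomes $f(\jiao{x/m}_\EE+d)$, where $\jiao{x/m}_\EE\in\EE$ is the orthogonal projection of $x/m\in\fra$ onto $\EE$.

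By the construction in \S\ref{sss:poly action}, $f$ acts on $\bJ_A$ by cup product with its image in $\upH^2_{L^{A,\ab}_{\dm}}(\pt)$. Restricting equivariance along $\wt s:\tGmb\to T\times\Grot\to L^{A,\ab}_{\dm}$ and applying $\om_*$, this becomes multiplication by the pullback $\wt s^*(f)\in\upH^2_{\tGmb}(\pt)$ on $\om_*\bJ^\flat_A\cong\bI_A\otimes\Qlbar[v]$, valid because $\tGmb$ acts trivially on $\frg_{\un\y}$ by Lemma~\ref{l:Gmb action}. Direct calculation from the definition of $\wt s$ yields $\wt s^*(f'+c\d)=rm\cdot(f'+c\d)(x/m+d)\cdot v_0$ for any lift $f'+c\d\in\fra^*_{\dm}$, where $v_0$ denotes the canonical generator of $\xch(\tGmb)$; similarly $\wt s^*(\d)=mr\,v_0$ and $\wt s^*(u)=-r\y\,v_0$. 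Therefore $v=-u=\y\d/m$ acts as $r\y\,v_0$, the specialization $v=\y/m$ corresponds to $v_0=1/(rm)$, and $f$ ultimately acts on $\pH\bI_A$ as the scalar $f(x/m+d)$.

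It remains to show $f(x/m+d)=f(\jiao{x/m}_\EE+d)$ when $f\in\EE^{\xi,*}_{\dm}$. The canonical lift $\EE^*_{\dm}\hookrightarrow\fra^*_{\dm}$ induced by the surjection $T\times\Grot\surj L^{A,\ab}_{\dm}$ (surjective because $L^A=T\cdot L^{A,\der}$) is characterized as the annihilator of $\xcoch(T\cap L^{A,\der})_\QQ\subset\fra$. Using Weyl invariance of the Killing form on $\fra$, one checks that $\xcoch(T\cap L^{A,\der})_\QQ$ equals the Killing-orthogonal complement $(\fra^M)^\perp$, where $\fra^M=\xcoch(Z^0_M)_\QQ$ is the translation subspace of $\EE$. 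Consequently the $\fra^*$-component $f'$ of the canonical lift of $f$ corresponds under Killing to a vector in $\fra^M$, while $x/m-\jiao{x/m}_\EE\in(\fra^M)^\perp$ by definition of orthogonal projection. The Killing pairing therefore vanishes, giving $f'(x/m)=f'(\jiao{x/m}_\EE)$ and hence the desired equality.

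The main obstacle is establishing the subspace identification $(\fra^M)^\perp=\xcoch(T\cap L^{A,\der})_\QQ$ inside $\fra$, which combines the reductive decomposition $L^A=Z^0_{L^A}\cdot L^{A,\der}$ (up to isogeny) with the Weyl-invariance of the Killing form. Once this is in place, everything else reduces to linear-algebraic orthogonality.
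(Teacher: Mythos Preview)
There is a genuine gap. Your key step ``this becomes multiplication by the pullback $\wt s^*(f)\in\upH^2_{\tGmb}(\pt)$ on $\om_*\bJ^\flat_A$'' is not correct, and it is what leads you to the overstrong (and in general false) conclusion that $\pH\bI_A$ is a \emph{strict} eigenspace. The action of $f\in\EE^*_\dm$ on $\bJ_A$ is not an equivariant parameter on $\frg_{\un\y}$; it is transported from the $L^{A,\ab}_\dm$-parameter on $\cC^{A,\sh}$ via $\eta^{A,*}$ and $\a^A_!$. Concretely, after passing to $G_{\un0}\times\tGmb$-equivariance, the relevant class lives in
\[
\upH^2_{G_{\un0}\times\tGmb}(X^A)\;=\;\upH^2_{P^A_0\times\tGmb}(\pt)\;\cong\;\upH^2_{P^A_0}(\pt)\oplus\Qlbar\cdot v_0,
\]
and it equals $(\chi_f,\ rm\cdot f(x/m+d)\cdot v_0)$, where $\chi_f\in\upH^2_{P^A_0}(\pt)$ is the character $P^A_0\to L^{A,\ab}\xr{f|_{L^{A,\ab}}}\Gm$. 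You have correctly computed the $v_0$-component, but you have dropped $\chi_f$. Cup product with $\chi_f$ is a nilpotent but typically nonzero operator on $\bI_A$ (already in the principal block $M=T$ these are the tautological Chern classes on $X^A$ that give the polynomial part of the Hecke action). Hence $f-f(x/m+d)$ acts nilpotently, not as zero.

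Your later orthogonality argument identifying $f(x/m+d)=f(\jiao{x/m}_\EE+d)$ is fine and does recover the correct eigenvalue. To repair the proof you must add the missing nilpotence argument for the $\chi_f$-part; this is exactly what the paper does. It constructs the $L^{A,\ab}$-torsor $\wt X\to X$, shows that the $\tGmb$-action on $\wt\cX=[G_{\un0}\bs\wt X]$ via $\rho$ agrees with the one coming through $G_{\un0}$ and is hence trivial, and deduces that for $\chi\in\xch(L^{A,\ab})$ (extended trivially on $\tGmb$) one has $c_1^{\tGmb}(\chi)=c_1(\chi)\otimes 1$, i.e.\ no $v$-component. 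This yields only a \emph{generalized} eigenvalue, lying in the image of $\rho_*:\xcoch(\tGmb)_\QQ\to\EE_\dm$, whose intersection with $\EE$ is the single point $\jiao{x/m}_\EE$. Drop the strict-eigenspace claim and your computation of the scalar then matches the paper's conclusion.
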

\begin{proof} We use the notation from \S\ref{sss:poly action}. We write $X^{A}=G_{\un0}\twtimes{P^{A}_{0}}\frp^{A}_{\y}$ as $X$, and let $\cX=[G_{\un0}\bs X]$. Let $p: L^{A,\ab}_{\dm}\times\Gdil\to \Grot\times\Gdil$ be the projection,  let $\mu: \Gm=\Gm^{\hs}\to\Grot\times\Gdil$ be given by $t\mapsto (t^{rm/e}, t^{-r\y})$ and let $H\to L^{A,\ab}_{\dm}\times\Gdil$ be the base change of $p$ along $\mu$. Then we have an exact sequence $1\to L^{A,\ab}\to H\to \Gm^{\hs}\to1$. The map $\rho: \tGmb\xr{\wt s}T\times\Grot\times\Gdil\to L^{A,\ab}_{\dm}\times\Gdil$ (for $\wt s$ see \eqref{wt s}) has image in $H$ and gives a section to the projection $H\to \Gm^{\hs}$. Therefore we may identify $H$ with $L^{A,\ab}\times \tGmb$.  The projection $H\to L^{A,\ab}_{\dm}$ is an isogeny, which induces an isomorphism $\xcoch(H)_{\QQ}\cong \EE_{\dm}$.

Let $\pi: \wt X\to X$ be the $L^{A,\ab}$-torsor corresponding to the map $\eta: X\to [\frl^{A}/L^{A}]\to [\pt/L^{A,\ab}]$. Then the $G_{\un0}$-action lifts to $\wt X$ canonically, and we form the stack $\wt\cX=[G_{\un0}\bs \wt X]$. Also $H$ acts on $\wt X$ extending the $L^{A,\ab}$-action, hence $\tGmb$ also acts on $\wt X$ via $\rho:\tGmb\to H$. A direct calculation shows that the $\tGmb$-action via $\rho$ on $\wt X$ coincides with the action $\l$ given by $\tGmb\xr{rx}T\to G_{\un0}$ and the action of $G_{\un0}$ on $\wt X$. In particular, the action of $\rho(\tGmb)$ on $\wt \cX$ is trivial.

Let $\x: L^{A,\ab}\to\Gm$ be a character, and we extend it to $H$ by requiring it to be trivial on $\tGmb$. Let $X_{\x}=\wt X\twtimes{L^{A,\ab},\x}\Gm$ be the $\Gm$-torsor over $X$ using $\x_{1}$.  Let $\ov X_{\x}=\wt X\twtimes{H}\Gm$ be the $\Gm$-torsor over $[\wt X/H]=[X/\rho(\tGmb)]$ given by $\x$. Then we have $\ov X_{\x}\cong [X_{\x}/\rho(\tGmb)]$. The action of $\rho(\tGmb)$ on $X_{\x}$ is the same as the action $\l$ given as the restriction of the left $G_{\un0}$-action.  

Let $\cX_{\x}=[G_{\un0}\bs X_{\x}]$ and $\ov{\cX}_{\x}=[G_{\un0}\bs \ov X_{\x}]$ be the $\Gm$-torsors over $\cX$ and $\cX/\rho(\tGmb)$ respectively. By the above discussion,  the $\rho$-actions of $\tGmb$ on $\cX_{\x}$  and $\cX$ are trivial because they are the same as the $\l$-actions (hence part of the $G_{\un0}$-actions on $X_{\x}$ and $X$) which we quotient out. Therefore $\ov\cX_{\x}\cong [\cX_{\x}/\tGmb]\cong \cX_{\x}\times[\pt/\tGmb]$.

The $\Gm$-torsors $\cX_{\x}\to \cX$ defines a Chern class $c_{1}(\x)\in \upH^{2}_{G_{\un0}}(X,\b^{A,*}\cL^{A, \sh})=\upH^{2}(\cX,\b^{A,*}\cL^{A,\sh})$. Similarly the $\Gm$-torsor $\ov\cX_{\x}\to \ov\cX$ defines a Chern class $c^{\tGmb}_{1}(\x)\in \upH^{2}_{G_{\un0}\times\tGmb}(X,\b^{A,*}\cL^{A,\sh})=\upH^{2}_{\tGmb}(\cX,\b^{A,*}\cL^{A,\sh})$. By the above discussion, we have
\begin{equation}\label{no ep}
c^{\tGmb}_{1}(\x)=c_{1}(\x)\otimes 1
\end{equation}
under the factorization $\upH^{*}_{\tGmb}(\cX,\b^{A,*}\cL^{A,\sh})\cong \upH^{*}(\cX,\b^{A,*}\cL^{A,\sh})\otimes \Qlbar[v]$ (where $v\in\upH^{2}_{\tGmb}(\pt)$ is an equivariant parameter). 

By construction, the action of $\x\in \bS_{v}$ on $\pH \bI_{A}$ is induced from the map $\bI_{A}\otimes\Qlbar[v]\to \bI_{A}[2]\otimes\Qlbar[v]$ given by $\cup c^{\tGmb}_{1}(\x)$. By \eqref{no ep}, this map is $\cup c_{1}(\x)$ on $\bI_{A}$ and the identity on $\Qlbar[v]$. Since $\cup c_{1}(\x)$ is a nilpotent operator on $\bI_{A}$, the action of $\x$ on $\pH \bI_{A}$  is nilpotent. This being true for all $\x\in\xch(L^{A,\ab})$,  it implies that the eigenvalues of the $\SS_{\y/m}$-action on $\pH\bI_{A}$ are in the image of the map $\ell: \xcoch(\tGmb)_{\QQ}\xr{\rho_{*}} \xcoch(H)_{\QQ}\cong\EE_{\dm}$. However, the image of $\ell$ intersects $\EE$ in the unique point $\jiao{x/m}_{\EE}$, therefore the eigenvalue of the $\SS_{\y/m}$-action on $\pH\bI_{A}$ is given by $\jiao{x/m}_{\EE}$ if we identify $\SS_{\y/m}$ with $\calO(\EE)$. In other words, under the canonical isomorphism $\EE\cong\EE^{\xi}$,  the eigenvalue of the $\SS_{\y/m}$-action on $\pH\bI_{A}$ is given by $\jiao{x/m}_{A}$.
\end{proof}

\subsection{Recollections from \cite{L-cusp1}}\label{ss:WJ recall}

\sss{} Let $J\subset I^{\xi}$ be a proper subset of simple reflections. Let $W_{J}\subset \Wa^{\xi}$ be the finite subgroup generated by $J$. For any $A\in\frF^{\xi}$, we may identify $\Wa^{\xi}$ with $\Wa^{A}=(\Wa(\EE), I(A))$ for $\EE=\Span(A)$ (see \S\ref{sss:indep EMA}), then $J$ gives a unique facet $F^{A}_{J}\subset\partial A$ so that $W_{J}$ is the stabilizer of $F^{A}_{J}$ under $\Wa(\EE)$. Let $\frF^{\xi}_{J}\subset\frF$ be the facets of the form $F^{A}_{J}$ for various $A\in\frF^{\xi}$.  By definition we have a surjective map 
$$ \Pi_{J}: \frF^{\xi}\to \frF^{\xi}_{J}$$
sending $A$ to the  facet $F^{A}_{J}\subset\partial A$. This map is equivariant under $W_{x/m}$, and it induces a maps
\begin{equation*}
\un\Pi_{J}: W_{x/m}\bs \frF^{\xi}\to W_{x/m}\bs \frF^{\xi}_{J}.
\end{equation*}

\sss{} We define a subalgebra $\HH_{c}(W_{J})$ of $\HH_{c}(\Wa^{\xi})$ by taking the subspace
$$
\HH_{c}(W_{J})=\QQ[u]\otimes\Sym(\EE^{\xi,*}_{\dm})\otimes\QQ[W_{J}]
$$
and the same relations, except that the commutation relation \eqref{comm W E} is only required to hold for $i\in J$. For $\nu\in\QQ$, let 
\begin{equation*}
\HH_{c,\nu}(W_{J})=\HH_{c}(W_{J})/(u+\nu,\d-1).
\end{equation*}

\sss{}\label{sss:notation F} Fix $F\in\frF^{\xi}_{J}$. Let $\frp_{*}=\frp^{F}_{*}$, $P_{0}=e^{\frp_{0}}$ and $(L,L_{0},\frl,\frl_{*})=(L^{F},L^{F}_{0},\frl^{F}, \frl^{F}_{*})$. We may identify $W_{F}=\Stab_{\Wa}(F)$ with the Weyl group $W_{L}$ of $L$. 

Let $\Xi_{F}\subset\frF$ be the set of facets $A$ in the same $\Wa$-orbit as those in $\frF^{\xi}$ such that $F\subset\partial A$. Then $\Xi_{F}$ carries a transitive action of $W_{F}=W_{L}$.  Any $A\in \Xi_{F}$ gives a parabolic subgroup $Q^{A}\subset L$ whose Lie algebra is $\frq^{A}=\oplus_{n\in\ZZ}\frq^{A}_{n}$ where $\frq^{A}_{n}=\frp^{A}_{n}\cap \frl$. Then $L^{A}$ is a Levi subgroup of $Q^{A}$. For different $A\in\Xi_{F}$, the parabolics $Q^{A}$ of $L$ are in the same $L$-conjugacy class, therefore the quotient stacks $[\frl^{A}/L^{A}]$ are {\em canonically} identified for various $A\in \Xi_{F}$. We denote these identical stacks by $[\fkm/M]$. 

The Weyl group $W_{J}$ can be identified with the relative Weyl group $W^{A}_{F}:=N_{W_{F}}(W_{A})/W_{A}$.

\sss{} Choose $A\in\Xi_{F}$ and define $\dot\frl=L\twtimes{Q^{A}}\frq^{A}$. Note that $\dot\frl$ is the partial Grothendieck alteration for $\frl$ of type $Q^{A}$, and therefore is independent of the choice of $A\in\Xi_{F}$.  We have a diagram (which is independent of the choice of $A\in \Xi_{F}$)
\begin{equation*}
\xymatrix{\frl & \dot\frl\ar[l]_{a}\ar[r]^{b} & [\fkm/M]}
\end{equation*}
where $a$ is the partial Grothendieck alteration and $b$ is the composition of natural maps $\dot\frl\to [\frq^{A}/Q^{A}]\to [\frl^{A}/L^{A}]\cong [\fkm/M]$. Let $\cL_{M}$ be the cuspidal $M$-equivariant local system on the nilpotent orbit $\cO$ of $\fkm$ whose restriction to $\mathring\fkm_{\y}$ is  $\cL$.

The one-dimensional torus $\Gdil$ acts on $\frl,\frq$ and $\fkm$ by dilation, making the maps $a$ and $b$ $\Gdil$-equivariant. Recall the action of $\Grot$ on $L,Q$ and $L^{A}$ introduced in \S\ref{ss:Grot}, and the notation $L_{\dm}=L\rtimes\Grot$, etc. By viewing $L^{A}_{\dm}$ as Levi subgroups of $L_{\dm}$, we see that the canonical isomorphisms between $[\frl^{A}/L^{A}]$ for $A\in \Xi_{F}$ induce canonical isomorphisms between $[\frl^{A}/L^{A}_{\dm}]$, and we denote these identical stacks by $[\fkm/M_{\dm}]$. The maps $a$ and $b$ induce maps of stacks
\begin{equation*}
\xymatrix{[\frl/(L_{\dm}\times \Gdil)] & [\dot\frl/(L_{\dm}\times \Gdil)]\ar[l]_{a}\ar[r]^{b} & [\fkm/(M_{\dm}\times\Gdil)]}
\end{equation*}
We then have the parabolic induction
\begin{equation*}
\ind^{\frl}_{\fkm}(\cL^{\sh}_{M}):= a_{!}b^{*}\cL^{\sh}_{M}\in \cD_{L_{\dm}\times\Gdil}(\frl).
\end{equation*}
Here we write $\ind^{\frl}_{\fkm}$ instead of $\ind^{\frl}_{\frq^{A}}$ to indicate that it is independent of the choice of $A\in\Xi_{F}$.

Let $\dot\frl_{\cO}$ be the preimage of $\cO$ under the projection $\dot\frl\to[\fkm/M]$. Let $\mathring{a}: \dot\frl_{\cO}\to \frl$ and  $\mathring{b}: \dot\frl_{\cO}\to [\cO/M]$ be the restrictions of $a$ and $b$. Since the local system $\cL_{M}$ is clean and $a$ is proper, we have
\begin{equation}\label{ind clean}
\ind^{\frl}_{\fkm}(\cL^{\sh}_{M})\cong  \mathring{a}_{!}\mathring{b}^{*}\cL_{M}\cong  \mathring{a}_{*}\mathring{b}^{*}\cL_{M}\in \cD_{L_{\dm}\times\Gdil}(\frl).
\end{equation}

We recall the following theorem from \cite{L-cusp1}.
\begin{theorem}[\cite{L-cusp1}]\label{th:L} There is a natural action of $\HH_{c}(W_{J})$ on $\ind^{\frl}_{\fkm}(\cL^{\sh}_{M})\in \cD_{L_{\dm}\times\Gdil}(\frl)$. 
\end{theorem}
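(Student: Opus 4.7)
The plan is to reduce to Lusztig's construction in \cite{L-cusp1} and upgrade it with the extra torus equivariance. The present setup is precisely the one studied there, enriched by the commuting actions of $\Grot$ (defined in \S\ref{ss:Grot}) and $\Gdil$ (by dilation): the connected reductive group $L$, the pseudo-Levi $M\subset L$ supporting the cuspidal local system $\cL_M$ on a nilpotent orbit $\cO\subset\fkm$, the parabolic $Q^A\subset L$ with Levi $M$, and the partial Grothendieck alteration $\dot\frl=L\twtimes{Q^A}\frq^A\to\frl$. The relative Weyl group $W_J\cong W^A_F=N_{W_F}(W_A)/W_A$ is the finite Weyl group attached to $(L,M)$ by the generalized Springer correspondence.

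Each of the three classes of generators of $\HH_c(W_J)$ must act naturally. First, the group algebra $\QQ[W_J]$ acts on $\ind^{\frl}_{\fkm}(\cL^{\sh}_M)$: using the cleanness identity \eqref{ind clean}, the map $\mathring a:\dot\frl_\cO\to\frl$ is generically a $W_J$-Galois cover, and by Lusztig's generalized Springer theorem for the cuspidal datum $(M,\cL_M)$ in $L$, $\ind^{\frl}_{\fkm}(\cL^{\sh}_M)$ is (up to shift) the middle extension of the induced local system, whose endomorphism algebra is exactly $\QQ[W_J]$. Second, the polynomial subalgebra $\QQ[u]\otimes\Sym(\EE^{\xi,*}_{\dm})$ acts via equivariant Chern classes: $\cL^{\sh}_M$ carries a canonical $(M_{\dm}\times\Gdil)$-equivariant structure, so it is a module over $\upH^*_{M^{\ab}_{\dm}\times\Gdil}(\pt)\cong\Qlbar[u]\otimes\Sym(\EE^{\xi,*}_{\dm}\otimes\Qlbar)$ by Lemma \ref{l:E dm}, and this action propagates through $b^*$ and $a_!$. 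Both constructions are manifestly compatible with the $L_{\dm}\times\Gdil$-equivariance.

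The heart of the matter is the commutation relation \eqref{comm W E} between $s_i\in W_J$ and $v\in\EE^{*}_{\dm}$. By a transitivity of induction argument, it suffices to verify the relation inside the minimal Levi $L_i\supset M$ whose relative Weyl group over $M$ is $\{1,s_i\}$. There the alteration $\dot\frl_i\to\frl_i$ has a two-sheeted generic fiber swapped by $s_i$, and one computes $s_i\cdot v-{}^{s_i}v\cdot s_i$ as a residue along the diagonal divisor; the key calculation is that this residue equals $c_i\langle v,\alpha_i^\vee\rangle\,u$, with $c_i$ as in \S\ref{sss:ci}. The fact that the coefficient is $c_i$ rather than some naive invariant (like the multiplicity of $\alpha_i$) is precisely where cuspidality of $\cL_M$ enters: it forces a particular shape for $e\in\cO$ and hence a particular Jordan block structure of $\ad(e)$ on $\frn_i^+$. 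This is the main obstacle, and it is handled in \cite{L-cusp1} by a case-by-case rank-one analysis using the classification of cuspidal pairs. Once this residue formula is in hand, gluing the polynomial and Weyl group actions yields the desired $\HH_c(W_J)$-action, and the equivariance under $\Grot\times\Gdil$ is automatic since every operator above has been built $L_{\dm}\times\Gdil$-equivariantly.
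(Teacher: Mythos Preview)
Your proposal is correct and captures the essential content, but the paper takes a more streamlined route that you should be aware of. Rather than building the $W_J$- and polynomial actions separately and then verifying the commutation relation \eqref{comm W E} by a rank-one reduction, the paper identifies the entire equivariant self-Ext algebra of $\ind^{\frl}_{\fkm}(\cL^{\sh}_M)$ at once. Using cleanness \eqref{ind clean} and proper base change, one has
\[
\Ext^{*}_{L_{\dm}\times\Gdil}\bigl(\ind^{\frl}_{\fkm}(\cL^{\sh}_M),\,\ind^{\frl}_{\fkm}(\cL^{\sh}_M)\bigr)\;\cong\;\upH^{L_{\dm}\times\Gdil}_{2N-*}(\ddot\frl_{\cO},\ddot\cL_M),
\]
where $\ddot\frl_{\cO}=\dot\frl_{\cO}\times_{\frl}\dot\frl_{\cO}$ is the Steinberg-type variety. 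The main result cited from \cite{L-cusp1} is then Corollary~6.4 there: this Borel--Moore homology, with its convolution product, is a \emph{free} $\HH_c(W_J)$-module of rank one generated by the fundamental class of the diagonal. Hence $\HH_c(W_J)\cong\Ext^{*}$ as graded rings, and the action on $\ind^{\frl}_{\fkm}(\cL^{\sh}_M)$ is immediate. The identification of Lusztig's polynomial part $\Sym(\xch(Z^0_{L^A_{\dm}})_{\QQ})$ with $\Sym(\EE^{\xi,*}_{\dm})$ is exactly Lemma~\ref{l:E dm}, as you note.

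Two small corrections to your write-up. First, $M=L^A$ is an honest Levi subgroup of $L$ (of the parabolic $Q^A$), not merely a pseudo-Levi; the pseudo-Levi language applies to $M\subset G$, not to $M\subset L$. Second, the verification of the commutation relation in \cite{L-cusp1} (Theorem~5.5 there) is not really ``case-by-case''; it is a uniform localization computation in equivariant $K$-theory/homology, with cuspidality entering through the structure of the centralizer of $e\in\cO$. Your description of the role of $c_i$ is otherwise accurate.
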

\begin{proof} Let $\ddot\frl_{\cO}=\dot\frl_{\cO}\times_{\frl}\dot\frl_{\cO}$ with two projections $\pr_{1},\pr_{2}:\ddot\frl_{\cO}\to\dot\frl_{\cO}$. Let $\ddot\cL_{M}=\pr_{1}^{*}\mathring{b}^{*}\cL_{M}\otimes\pr_{2}^{*}\mathring{b}^{*}\cL^{*}_{M}$ ($\cL^{*}_{M}$ is the dual local system of $\cL_{M}$), a local system on $\ddot\frl_{\cO}$. 

Let $N=\dim\dot\frl_{\cO}$. By \eqref{ind clean} and proper base change, we have a graded isomorphism
\begin{eqnarray}\label{Ext hBM}
&&\Ext^{*}_{L_{\dm}\times\Gdil}(\ind^{\frl}_{\fkm}(\cL^{\sh}_{M}), \ind^{\frl}_{\fkm}(\cL^{\sh}_{M}))\cong \Ext^{*}_{L_{\dm}\times\Gdil}( \mathring{a}_{!}\mathring{b}^{*}\cL_{M}, \mathring{a}_{*}\mathring{b}^{*}\cL_{M})\\
\notag&\cong& \Ext^{*}_{L_{\dm}\times\Gdil}(\mathring{b}^{*}\cL_{M},\pr_{2,*}\pr^{!}_{1}\mathring{b}^{*}\cL_{M})\cong \upH^{L_{\dm}\times\Gdil}_{2N-*}(\ddot\frl_{\cO}, \ddot\cL_{M}).
\end{eqnarray}
Under \eqref{Ext hBM}, $\upH^{L_{\dm}\times\Gdil}_{2N-*}(\ddot\frl_{\cO}, \ddot\cL_{M})$ carries a graded ring structure induced from the ring structure of $\Ext^{*}_{L_{\dm}\times\Gdil}(\ind^{\frl}_{\fkm}(\cL^{\sh}_{M}), \ind^{\frl}_{\fkm}(\cL^{\sh}_{M}))$. We have a diagonal embedding $\Delta: \dot\frl_{\cO}\incl\ddot\frl_{\cO}$ and a canonical copy of the trivial local system inside $\Delta^{*}\ddot\cL_{M}\cong b^{*}\cL_{M}\otimes b^{*}\cL^{*}_{M}$. Then the unit element $\one$ in the ring structure of $\upH^{L_{\dm}\times\Gdil}_{2N-*}(\ddot\frl_{\cO}, \ddot\cL_{M})$ is given by the image of the fundamental class of $\dot\frl_{\cO}$ under the map 
$$\upH^{L_{\dm}\times\Gdil}_{2N}(\dot\frl_{\cO},\Qlbar)\incl \upH^{L_{\dm}\times\Gdil}_{2N}(\dot\frl_{\cO},\Delta^{*}\ddot\cL_{M})\xr{\Delta_{*}}\upH^{L_{\dm}\times\Gdil}_{2N}(\ddot\frl_{\cO},\ddot\cL_{M}).$$ 
This is consistent with the definition of  $\one$ in \cite[\S6.1]{L-cusp1}.
 
For $A\in\Xi_{F}$, $L^{A}_{\dm}$ is a Levi subgroup of $L_{\dm}$ carrying a cuspidal local system $\cC_{A}$ which corresponds to $\cL_{M}$. Moreover, $W_{J}\cong W^{A}_{F}$ is the Weyl group  of $L_{\dm}$ relative to $L^{A}_{\dm}$. A graded affine Hecke algebra $\bH=\bH_{c}(W_{J})$ was introduced in \cite[\S0.1]{L-cusp1} for the triple $(L_{\dm}, L^{A}_{\dm}, \cL_{M})$. It takes the form $\bH_{c}(W_{J})=\QQ[u]\otimes\Sym(\xch(Z^{0}_{L^{A}_{\dm}})_{\QQ})\otimes\QQ[W_{J}]$. By Lemma \ref{l:E dm}, we have a canonical $W_{J}$-equivariant isomorphism $\xch(Z^{0}_{L^{A}_{\dm}})_{\QQ}\cong \EE^{\xi,*}_{\dm}$. Moreover, our definitions of the coroots $\a_{j}$ and the numbers $c_{j}$ for $j\in J$ are the same as their counterparts in \cite{L-cusp1}: we simply copied the definitions in {\em loc.cit}. We conclude that there is a canonical isomorphism $\bH_{c}(W_{J})\cong\HH_{c}(W_{J})$.

In \cite[Corollary 6.4]{L-cusp1}, a graded action of $\HH_{c}(W_{J})$ on $\upH^{L_{\dm}\times\Gdil}_{2N-*}(\ddot\frl_{\cO}, \ddot\cL_{M})$ was constructed (we use the action that was denoted by $\Delta$ in \cite{L-cusp1}) and it was proved there that $\upH^{L_{\dm}\times\Gdil}_{2N-*}(\ddot\frl_{\cO}, \ddot\cL_{M})$ is a free $\HH_{c}(W_{J})$-module of rank one with basis given by $\one$. Therefore we have a graded isomorphism
\begin{equation*}
\HH_{c}(W_{J})\cong \upH^{L_{\dm}\times\Gdil}_{2N-*}(\ddot\frl_{\cO}, \ddot\cL_{M})
\end{equation*}
sending $\one$ to $\one$. It is also easy to check that the above isomorphism is indeed a ring isomorphism. Since $\upH^{L_{\dm}\times\Gdil}_{2N-*}(\ddot\frl_{\cO}, \ddot\cL_{M})$ acts on $\ind^{\frl}_{\fkm}(\cL^{\sh}_{M})$ by the isomorphism \eqref{Ext hBM}, so does $\HH_{c}(W_{J})$.
\end{proof}

\sss{}\label{sss:WJ poly} Under the action of $\HH_{c}(W_{J})$ on $\ind^{\frl}_{\fkm}(\cL^{\sh}_{M})$, the action of the polynomial part $\QQ[u]\otimes\Sym(\EE^{\xi,*}_{\dm})$ has a similar description as in \S\ref{sss:poly}. This follows from the construction in \cite[\S4]{L-cusp1}. In particular, $u$ acts via the equivariant parameter of $\Gdil$ and $\d/e$ acts via the equivariant parameter of $\Grot$.

\sss{} We also need a compatibility of the construction in Theorem \ref{th:main} with restriction from $W_{J}$ to parabolic subgroups. Let $J'\subset J$ be a subset. Choosing $A\in\Xi_{F}$, and let $F'=\Pi_{J'}(A)$. Then $F\subset\partial F'$, hence $F'$ gives a parabolic subgroup $Q^{F'}$ of $L$ containing $Q^{A}$, and $L'=L^{F'}$ is a Levi subgroup of $Q^{F'}$.  Let $Q^{A}_{F'}$ be the image $Q^{A}\subset Q^{F'}\to L'$, then $Q^{A}_{F'}$ is a parabolic subgroup of $L'$ with Levi $L^{A}\cong M$. We have the transitivity of the induction
\begin{equation*}
\ind^{\frl}_{\fkm}(\cL^{\sh}_{M})\cong \ind^{\frl}_{\frl'}\circ\ind^{\frl'}_{\fkm}(\cL^{\sh}_{M})\in \cD_{L_{\dm}\times\Gdil}(\frl).
\end{equation*}
Theorem \ref{th:L} gives an action of $\HH_{c}(W_{J'})$ on $\ind^{\frl'}_{\fkm}(\cL^{\sh}_{M})$, which induces an action of $\HH_{c}(W_{J'})$ on $\ind^{\frl}_{\fkm}(\cL^{\sh}_{M})$ by the above isomorphism. The following statement follows from the construction in \cite{L-cusp1}.

\begin{prop}\label{p:WJ comp} The action of $\HH_{c}(W_{J'})$ on $\ind^{\frl}_{\fkm}(\cL^{\sh}_{M})$ induced from its action on $\ind^{\frl'}_{\fkm}(\cL^{\sh}_{M})$ coincides with the restriction of the action of $\HH_{c}(W_{J})$ on $\ind^{\frl}_{\fkm}(\cL^{\sh}_{M})$ constructed from Theorem \ref{th:L}.
\end{prop}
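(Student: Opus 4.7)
The plan is to verify the equality of the two $\HH_{c}(W_{J'})$-actions on generators of $\HH_{c}(W_{J'})$, namely the polynomial part $\QQ[u]\otimes\Sym(\EE^{\xi,*}_{\dm})$ together with the simple reflections $s_{j}$ for $j\in J'$. For the polynomial part, by \S\ref{sss:WJ poly} applied to both triples $(L,M,J)$ and $(L',M,J')$, the element $u$ acts by cupping with the equivariant parameter of $\Gdil$, while each $v\in\EE^{\xi,*}_{\dm}$ acts by cupping with the Chern class of the corresponding character of the abelianized Levi $L^{A,\ab}_{\dm}\cong M^{\ab}_{\dm}$, pulled back from $[\pt/(M^{\ab}_{\dm}\times\Gdil)]$. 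This pullback factors through the stack $[\fkm/(M_{\dm}\times\Gdil)]$, which is common to both descriptions of $\ind^{\frl}_{\fkm}(\cL^{\sh}_{M})$ (constructed directly, or as $\ind^{\frl}_{\frl'}\circ\ind^{\frl'}_{\fkm}(\cL^{\sh}_{M})$). Transitivity of pullback and proper pushforward then shows that the two polynomial actions on $\ind^{\frl}_{\fkm}(\cL^{\sh}_{M})$ coincide.

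For a simple reflection $s_{j}$ with $j\in J'$, the construction in \cite[Sections 5-6]{L-cusp1} defines the action of $s_{j}$ via a canonical class in a rank-one Steinberg-type Borel--Moore homology group associated to a minimal Levi $L^{A}_{j}$ containing $M$, characterized by $W_{L^{A}_{j}}/W_{M}=\jiao{s_{j}}$. Crucially, this class depends only on the triple $(L^{A}_{j},M,\cL_{M})$ and not on the choice of ambient reductive group. Since $j\in J'\subset J$, the same $L^{A}_{j}$ (necessarily contained in $L'\subset L$) underlies the $s_{j}$-correspondence at both the $L'$-level and the $L$-level. The compatibility with the induction functor $\ind^{\frl}_{\frl'}$ then follows from proper base change applied to the canonical commutative diagram of partial Grothendieck alterations for $L$ and $L'$ that presents $\ind^{\frl}_{\frl'}$, once the fundamental classes of the relevant Steinberg-type varieties are identified under this map.

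The principal technical obstacle is the bookkeeping of fundamental classes, degree shifts, and normalization constants as the class representing $s_{j}$ is transferred across $\ind^{\frl}_{\frl'}$. This reduces to a dimension count verifying that the fiber product presenting $\Ext^{*}_{L_{\dm}\times\Gdil}(\ind^{\frl}_{\fkm}(\cL^{\sh}_{M}),\ind^{\frl}_{\fkm}(\cL^{\sh}_{M}))$ maps onto its $L'$-analogue with the expected fiber dimension, so that proper pushforward sends the homology generator $\one$ on the $L'$-side to the corresponding $\one$ on the $L$-side, compatibly with the inclusion $\HH_{c}(W_{J'})\incl\HH_{c}(W_{J})$ of rings under the isomorphism \eqref{Ext hBM}. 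Once this compatibility is established, equality of the two $\HH_{c}(W_{J'})$-actions on $\ind^{\frl}_{\fkm}(\cL^{\sh}_{M})$ follows from their agreement on the above generating set.
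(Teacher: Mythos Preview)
The paper does not give a proof of this proposition at all: immediately before stating it, the authors simply write ``The following statement follows from the construction in \cite{L-cusp1},'' and leave it at that. Your proposal is essentially an unpacking of that one-line reference --- checking the two actions on the generating set consisting of the polynomial part and the simple reflections $s_{j}$ for $j\in J'$, and appealing to the fact that in \cite{L-cusp1} the class representing $s_{j}$ is built from a minimal Levi $L^{A}_{j}$ with $W_{L^{A}_{j}}/W_{M}=\jiao{s_{j}}$ that depends only on $(L^{A}_{j},M,\cL_{M})$, hence is the same whether the ambient group is $L'$ or $L$. This is exactly the shape of argument implicit in the reference, so your approach is the expected one and there is nothing further in the paper to compare against.

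One small comment on your bookkeeping paragraph: the cleanest way to phrase the compatibility is not via pushforward of fundamental classes from the $L'$-Steinberg variety to the $L$-Steinberg variety, but rather via the functoriality of the isomorphism \eqref{Ext hBM}. Since $\ind^{\frl}_{\frl'}$ is a functor, any endomorphism of $\ind^{\frl'}_{\fkm}(\cL^{\sh}_{M})$ induces an endomorphism of $\ind^{\frl}_{\fkm}(\cL^{\sh}_{M})$, giving an algebra map $\Ext^{*}_{L'_{\dm}\times\Gdil}(\ind^{\frl'}_{\fkm}(\cL^{\sh}_{M}),\ind^{\frl'}_{\fkm}(\cL^{\sh}_{M}))\to\Ext^{*}_{L_{\dm}\times\Gdil}(\ind^{\frl}_{\fkm}(\cL^{\sh}_{M}),\ind^{\frl}_{\fkm}(\cL^{\sh}_{M}))$ that automatically sends $\one$ to $\one$; one then checks it intertwines the two copies of $\HH_{c}(W_{J'})$ on generators, which is what your first two paragraphs do.
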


\subsection{Action of parabolic subalgebras}\label{ss:par}

The goal of this subsection is to prove the following result. 

\begin{prop}\label{p:WJ} Let $J\subset I^{\xi}$ be a proper subset and $[F]\in W_{x/m}\bs \frF^{\xi}_{J}$. Then there is a canonical action of $\HH_{c,\y/m}(W_{J})$ on the perverse sheaf 
$$\II_{[F]}:=\bigoplus_{[A]\in \un\Pi^{-1}_{J}[F]}\pH \bI_{[A]}.$$
\end{prop}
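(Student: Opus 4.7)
The strategy is to transport the $\HH_{c}(W_{J})$-action on $\cI:=\ind^{\frl}_{\fkm}(\cL^{\sh}_{M})$ from Theorem \ref{th:L} into an $\HH_{c,\y/m}(W_{J})$-action on $\II_{[F]}$, by first further inducing from $L$ up to $G_{\un0}$ along the spiral $\frp_{*}$, and then applying the $\Gmb$-equivariant localization argument of \S\ref{sss:poly action}. Introduce the spiral-type induction
\begin{equation*}
\textup{sInd}(\cK):=\mathring\a_{!}\mathring\gamma^{*}\cK,\qquad \cD_{L_{0,\dm}\times\Gdil}(\frl_{\y})\to \cD_{G_{\un0,\dm}\times\Gdil}(\frg_{\un\y}),
\end{equation*}
coming from the diagram $\frg_{\un\y}\xleftarrow{\mathring\a}G_{\un0}\twtimes{P_{0}}\frp_{\y}\xrightarrow{\mathring\gamma}[\frl_{\y}/L_{0}]$, where $\mathring\gamma$ is induced by the projection $\frp_{\y}\surj\frl_{\y}$ modulo the nilradical. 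For each $A\in\Xi_{F}$, the splitting $\frp^{A}_{\y}=\frn_{P_{0},\y}\oplus\frq^{A}_{\y}$ (where $\frn_{P_{0}}$ is the nilradical of $\frp_{0}$) yields a transitivity isomorphism $\bJ_{A}\cong \textup{sInd}(\ind^{\frl_{\y}}_{\frq^{A}_{\y}}(\cL^{A,\sh}))$.

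Second, I claim a canonical decomposition
\begin{equation*}
\cI|_{\frl_{\y}}\cong \bigoplus_{[A]\in\un\Pi^{-1}_{J}[F]}\ind^{\frl_{\y}}_{\frq^{A}_{\y}}(\cL^{A,\sh})
\end{equation*}
as $L_{0,\dm}\times\Gdil$-equivariant complexes on $\frl_{\y}$. This follows from the $L_{0}$-orbit stratification of $a^{-1}(\frl_{\y})\subset\dot\frl=L\twtimes{Q^{A}}\frq^{A}$ (noting $\frl^{\Gmb}=\frl_{\y}$ by Lemma \ref{l:Gmb action}): the orbits are parametrized by $L_{0}$-orbits on the variety of parabolic subgroups of $L$ with graded Levi of type $(M,\fkm_{*})$, and via the building-theoretic description of \S\ref{sss:notation F} together with Corollary \ref{c:xi facets}(2), this indexing set identifies with $\un\Pi^{-1}_{J}[F]$. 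Combined with the transitivity above, applying $\textup{sInd}$ then gives $\textup{sInd}(\cI|_{\frl_{\y}})\cong \bigoplus_{[A]}\bJ_{[A]}$.

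Finally, transport the action. Theorem \ref{th:L} provides the $\HH_{c}(W_{J})$-action on $\cI$ as endomorphisms in $\cD_{L_{\dm}\times\Gdil}(\frl)$; passing to $\cD_{L_{0,\dm}\times\Gdil}(\frl)$, restricting to $\frl_{\y}$, and applying $\textup{sInd}$ produces an $\HH_{c}(W_{J})$-action on $\bigoplus_{[A]}\bJ_{[A]}$. Now repeat the argument of \S\ref{sss:poly action}: further pass to $\cD_{G_{\un0}\times\tGmb}(\frg_{\un\y})$, push along $\om_{*}:[\frg_{\un\y}/(G_{\un0}\times\tGmb)]\to [\frg_{\un\y}/G_{\un0}]$ (which, since $\tGmb$ acts trivially on $\frg_{\un\y}$ by Lemma \ref{l:Gmb action}, yields a $\Qlbar[v]$-linear complex with $v\in\upH^{2}_{\tGmb}(\pt)$), pass to perverse cohomology, and by Lemma \ref{l:loc} specialize $v\mapsto\y/m$ for $n$ of each parity sufficiently large. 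This enforces the relations $u+\y/m=0=\d-1$ from the weights of $\tGmb$ in $\Gdil$ and $\Grot$, producing the claimed $\HH_{c,\y/m}(W_{J})$-action on $\II_{[F]}$. The main obstacle is the middle step: the precise combinatorial matching between $L_{0}$-orbits of graded parabolics of $L$ of type $(M,\fkm_{*})$ and $\un\Pi^{-1}_{J}[F]$, together with the verification that the permutation action of $W_{J}\cong N_{W_{F}}(W_{A})/W_{A}$ on $\Xi_{F}$ (hence on the summands) is compatible with the intrinsic $W_{J}$-action from Theorem \ref{th:L} via convolution on $\dot\frl_{\cO}\times_{\frl}\dot\frl_{\cO}$; the first and third steps are essentially formal given Theorem \ref{th:L} and the construction in \S\ref{sss:poly action}.
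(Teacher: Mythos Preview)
Your overall architecture is right---further induce $\ind^{\frl}_{\fkm}(\cL^{\sh}_{M})$ up to $\frg_{\un\y}$ and then use $\Gmb$-localization to pass to $\II_{[F]}$---and this is exactly what the paper does. But your middle step contains a genuine error.

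The claimed decomposition $\cI|_{\frl_{\y}}\cong\bigoplus_{[A]}\ind^{\frl_{\y}}_{\frq^{A}_{\y}}(\cL^{A,\sh})$ is false as stated. You justify it by the $L_{0}$-orbit stratification of $a^{-1}(\frl_{\y})$, but that is a stratification by \emph{locally closed} pieces, not a decomposition into closed components, so $a_{!}$ does not split along it. Concretely, take $L=\GL_{2}$, $M=T$, $\y$-grading with $\frl_{\y}$ the upper nilpotent line. Then $a^{-1}(\frl_{\y})$ is the union of $\{B\}\times\frl_{\y}$ and $\PP^{1}\times\{0\}$ meeting at a point, and one computes $\cI|_{\frl_{\y}}\cong\Qlbar_{\frl_{\y}}\oplus\Qlbar_{\{0\}}[-2]$, whereas your right-hand side is $\Qlbar_{\frl_{\y}}\oplus\Qlbar_{\{0\}}$. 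The shift discrepancy is not cosmetic: it reflects that you have confused the base-change $a^{-1}(\frl^{\Gmb})$ with the genuine fixed locus $(\dot\frl)^{\Gmb}$, which are different schemes.

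The paper repairs this by never restricting to $\frl_{\y}$ at all. Instead it forms the fibre product $\cX=(G_{\un0}\twtimes{P_{0}}\frp_{\y})\times_{[\frl/L]}[\dot\frl/L]$ with its $\Gmb$-action, proves in Lemma \ref{l:X fix} that $\cX^{\Gmb}=\coprod_{[A]\in W_{L_{0}}\bs\Xi_{F}}X^{A}$ is an honest disjoint union, and then applies the localization Lemma \ref{l:loc} to the $\Gmb$-invariant proper map $f:\cX\to\frg_{\un\y}$. This produces the specialization $\pH^{n}\ov f_{!}\cK\isom\pH^{\ev/\odd}f_{0,!}\cK_{0}$ for $n\gg0$, through which the $\HH_{c}(W_{J})$-action descends to $\HH_{c,\y/m}(W_{J})$. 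There is a further point you skipped: the fixed locus is indexed by all of $W_{L_{0}}\bs\Xi_{F}$, not just $\un\Pi^{-1}_{J}[F]$, and one must prove (Lemma \ref{l:IF}) that the contributions from $A\notin\frF^{\xi}$ vanish. This uses that $\cO$ is distinguished in $\frl^{A}$ and a comparison of gradings to show $\frl^{A}_{\y}\cap\cO^{A}=\varnothing$.

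Your first step (transitivity) and your final step (specializing $u,\d$ via the $\tGmb$-argument) are correct and match the paper. The obstacle you flagged at the end---compatibility of the $W_{J}$-action with the permutation of summands---is actually not what needs checking; the $W_{J}$-action on $\II_{[F]}$ need not permute the $\pH\bI_{[A]}$ and the paper makes no such claim.
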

The proof  occupies the rest of this subsection. We use the notation in \S\ref{sss:notation F}.

\sss{} We have a morphism $d: G_{\un 0}\twtimes{P_{0}}\frp_{\y}\to [\frp_{\y}/P_{0}]\to [\frl_{\y}/L_{0}]\to [\frl/L]$. We define $\cX$ by the following Cartesian diagram
\begin{equation*}
\xymatrix{\cX\ar[r]\ar[d]  & [\dot\frl/L]\ar[d]^{a}\\
G_{\un 0}\twtimes{P_{0}}\frp_{\y}\ar[r]^{d} &  [\frl/L]}
\end{equation*}
In other words, if we choose $A\in\Xi_{F}$ and let $U_{0}=\ker(P_{0}\to L_{0})$, then $\cX=\wt\cX/L_{0}$, where $\wt\cX$ consists of triples $(v,gU_{0}, hQ^{A})\in \frg_{\un\y}\times G_{\un 0}/U_{0}\times L/Q^{A}$ such that $\Ad(g^{-1})v$ lies in the preimage of $\frl_{\y}\cap \Ad(h)\frq^{A}\subset \frl_{\y}$ under the projection $\frp_{\y}\surj \frl_{\y}$, and $\ell\in L_{0}$ acts on $\wt\cX$ diagonally by right translation by $\ell^{-1}$ on $G_{\un 0}/U_{0}$ and left translation by $\ell$ on $L/Q^{A}$. There is a $G_{\un 0}$-action on $\wt\cX$ by diagonally acting on $\frg_{\un\y}$ and $G_{\un 0}/U_{0}$, and this action descends to one on $\cX$.

There is an action of $\Gmb$ on $\cX$ induced from the trivial action on $G_{\un 0}\twtimes{P_{0}}\frp_{\y}$ and the $\Gmb$-action on $[\dot\frl/L]=[\frq^{A}/Q^{A}]$.

\begin{lemma}\label{l:X fix} The $\Gmb$-fixed point locus of $\cX$ has a decomposition
\begin{equation*}
\cX^{\Gmb}=\bigsqcup_{[A]\in W_{L_{0}}\bs \Xi_{F}} G_{\un 0}\twtimes{P^{A}_{0}}\frp^{A}_{\y}=\bigsqcup_{[A]\in W_{L_{0}}\bs \Xi_{F}} X^{A}.
\end{equation*}
\end{lemma}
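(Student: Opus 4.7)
The plan is to compute $\cX^{\Gmb}$ using the Cartesian structure of $\cX$ and reducing everything to the $\Gmb$-fixed locus of the second factor $[\dot\frl/L]$. By Lemma \ref{l:Gmb action}(1), $\Gmb$ acts trivially on $G_{\un 0}\twtimes{P_{0}}\frp_{\y}$ (since $P_{0}=P^{F}_{0}$ is $\Gmb$-fixed and $\frp_{\y}=\frp^{F}_{\y}$ has $\Gmb$-weight $\y-\y=0$). Moreover, the defining map $d$ factors through $[\frl_{\y}/L_{0}]$, and $\Gmb$ acts trivially on $\frl_{\y}$, so $d$ lands in $[\frl/L]^{\Gmb}$. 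Hence passage to $\Gmb$-fixed points commutes with the Cartesian square and yields
$$
\cX^{\Gmb}=(G_{\un 0}\twtimes{P_{0}}\frp_{\y})\times_{[\frl/L]}[\dot\frl/L]^{\Gmb}.
$$

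First I would analyze $[\dot\frl/L]^{\Gmb}$. The stack $[\dot\frl/L]\cong[\frq^{A}/Q^{A}]$ classifies pairs $(x,\frq)$ with $\frq\subset\frl$ a parabolic subalgebra of type $Q^{A}$ and $x\in\frq$, modulo $L$-conjugation; a $\Gmb$-fixed point corresponds to a pair in which $\frq$ is $\Gmb$-stable (equivalently, graded with respect to the $\jmath$-grading of $\frl$) and $x$ is $\Gmb$-fixed, which since $\Gmb$ acts on $\frl_{n}$ with weight $n-\y$ forces $x\in\frq\cap\frl_{\y}$. The key combinatorial claim is that graded parabolics of $\frl$ of type $Q^{A}$ are exactly $\{\frq^{A'}:A'\in\Xi_{F}\}$: one direction is immediate from the gradedness of $\frp^{A'}_{*}$, and for the converse a graded parabolic has a graded Levi which, by the bijection $\Gamma_{x}$ of \S\ref{ss:Zm}--\S\ref{s:facets spiral} between graded \pL subalgebras and facets, must be $\frl^{A'}$ for some $A'\in\Xi_{F}$, and then the choice of graded parabolic containing it reconstructs $\frq^{A'}$. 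Under $L_{0}$-conjugation, $\frq^{A'}$ and $\frq^{A''}$ give the same $L_{0}$-orbit iff $A',A''$ lie in the same $W_{L_{0}}$-orbit (any $L_{0}$-conjugator may be adjusted into $N_{L_{0}}(T)$ since both parabolics contain $T$); and the $L_{0}$-stabilizer of $\frq^{A'}$ equals $L_{0}\cap Q^{A'}$, which has Lie algebra $\frq^{A'}_{0}=\frp^{A'}_{0}\cap\frl_{0}$. Putting this together,
$$
[\dot\frl/L]^{\Gmb}=\bigsqcup_{[A']\in W_{L_{0}}\bs\Xi_{F}}\bigl[\frq^{A'}_{\y}/(L_{0}\cap Q^{A'})\bigr].
$$

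The last step is to assemble the fiber product componentwise. For fixed $[A']$, a point of $(G_{\un 0}\twtimes{P_{0}}\frp_{\y})\times_{[\frl/L]}[\frq^{A'}_{\y}/(L_{0}\cap Q^{A'})]$ is a tuple $([g,v],[w])$ with $v\in\frp_{\y}$ and $w\in\frq^{A'}_{\y}$ together with an identification of $\bar v$ and $w$ in $[\frl_{\y}/L_{0}]$; using the $L_{0}$-action one may arrange $\bar v=w$, so that the condition becomes: the $\frl_{\y}$-component of $v$ lies in $\frq^{A'}_{\y}$, equivalently $v\in\frq^{A'}_{\y}\oplus\frn^{F}_{\y}=\frp^{A'}_{\y}$ (using the standard inclusion $\frn^{F}_{\y}\subset\frp^{A'}_{\y}$ valid for $F\subset\partial A'$). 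The residual equivalence is then by the subgroup of $P_{0}$ stabilizing both conditions, which is $P^{A'}_{0}=(L_{0}\cap Q^{A'})\cdot U_{0}$ (this decomposition coming from $\frp^{A'}_{0}=\frq^{A'}_{0}\oplus\frn^{F}_{0}$). The result is precisely $G_{\un 0}\twtimes{P^{A'}_{0}}\frp^{A'}_{\y}=X^{A'}$, completing the decomposition.

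The main obstacle is the combinatorial identification in the middle step: proving that $L_{0}$-conjugacy of graded parabolics $\frq^{A'}$ in $\frl$ corresponds exactly to the $W_{L_{0}}$-action on $\Xi_{F}$, and verifying that the $L_{0}$-stabilizer $L_{0}\cap Q^{A'}$ fits together with the unipotent radical $U_{0}$ to give back the ``correct'' subgroup $P^{A'}_{0}$. Everything else is formal manipulation of the Cartesian square and the twisted product construction.
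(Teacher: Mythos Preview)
Your proposal is correct and arrives at the same decomposition as the paper, by what is essentially the same computation dressed in slightly different language. The paper works with the explicit presentation $\cX=\wt\cX/L_0$: it first observes that the $L_0$-action on $\wt\cX$ is free (because it is free already on the factor $G_{\un0}/U_0$), so that $\cX^{\Gmb}=\wt\cX^{\Gmb}/L_0$; it then identifies the $\Gmb$-fixed points of the partial flag variety $L/Q^A$ directly via the Bruhat-type decomposition $(L/Q^A)^{\Gmb}=\bigsqcup_{[w]\in W_{L_0}\bs W_L/W_A} L_0wQ^A/Q^A$, and reads off the condition on $v$ and the stabilizer $P^{wA}_0=U_0\cdot(L_0\cap{}^wQ^A)$ exactly as you do in your final step.

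Your route differs in two small ways. First, you organize the reduction through the Cartesian square and the stacky fixed locus $[\dot\frl/L]^{\Gmb}$; this is morally the same as the paper's lift to $\wt\cX$, but the paper's formulation sidesteps having to say precisely what ``$\Gmb$-fixed points of a quotient stack by a group on which $\Gmb$ acts nontrivially'' means, which you leave implicit. Second, to enumerate the $\Gmb$-stable (equivalently, graded) parabolics of $\frl$ of type $Q^A$ you appeal to the facet/graded-\pL\ dictionary $\Gamma_x$ of \S\ref{s:facets spiral}, whereas the paper uses the Bruhat decomposition of $(L/Q^A)^{\Gmb}$ into $L_0$-orbits, which is a one-line flag-variety fact and slightly lighter machinery. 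Either way one lands on the same indexing set $W_{L_0}\bs\Xi_F$ and the same identification of the stabilizer, so the final assembly into $X^{A'}=G_{\un0}\twtimes{P^{A'}_0}\frp^{A'}_\y$ is identical.
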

\begin{proof} Fix $A\in\Xi_{F}$ which defines a parabolic subgroup $Q=Q^{A}$ of $L$ whose Levi subgroup $L^{A}\cong M$. Then the set $\Xi_{F}\cong W_{F}/W_{A}=W_{L}/W_{A}$. Therefore we reduce to showing that
\begin{equation}\label{X fix}
\cX^{\Gmb}=\bigsqcup_{[w]\in W_{L_{0}}\bs  W_{L}/W_{A}} X^{wA}.
\end{equation}
The action of $\Gmb$  on $\cX$ lifts to $\wt\cX$. Suppose $(v,gU_{0}, hQ)\in \wt\cX$ maps to a point in $\cX$ fixed by $\Gmb$, then for $t\in\Gmb$, there is an $\ell_{0}\in L_{0}$ such that $t\cdot(v,gU_{0}, hQ)=(v,g\ell_{0}^{-1}U_{0}, \ell_{0}hQ)$. This implies that $g\ell_{0}^{-1}U_{0}=gU_{0}$ hence $\ell_{0}=1$, i.e., $(v,gU_{0}, hQ)\in \wt\cX^{\Gmb}$.  Therefore $\cX^{\Gmb}=\wt\cX^{\Gmb}/L_{0}$. 

If $(v,gU_{0}, hQ)\in \wt\cX^{\Gmb}$, then $hQ\in L/Q$ is fixed under $\Gmb$, i.e., there is a unique double coset $[w]\in W_{L_{0}}\bs W_{L}/W_{A}$ such that $hQ\in L_{0}wQ/Q\cong L_{0}/(L_{0}\cap{}^{w}Q_{0})$. If we fix a representative $w$ of $[w]$ in $W_{L}$, then we may write $hQ=\ell_{0}wQ$ for a well-defined  $\ell_{0}\in L_{0}/(L_{0}\cap{}^{w}Q_{0})$. The condition for $(v,gU_{0}, \ell_{0}wQ)$ to be in $\wt\cX$ is that $\Ad(g^{-1})v$ lies in the preimage of $\frl_{\y}\cap{}^{\ell_{0}w}\frq=\Ad(\ell_{0})(\frl_{\y}\cap {}^{w}\frq)$ under the projection $\frp_{\y}\to \frl_{\y}$. Note that for the $\e$-spiral $\frp^{wA}_{*}$, $\frp^{wA}_{\y}$ is exactly the preimage of $\frl_{\y}\cap{}^{w}\frq$ under the projection $\frp_{\y}\to \frl_{\y}$. Therefore the condition for $(v,gU_{0}, \ell_{0}wQ)$ to be in $\wt\cX^{\Gmb}$ becomes $\Ad(g^{-1})v\in\Ad(\ell_{0})\frp^{wA}_{\y}$, or that $v\in\Ad(g\ell_{0})\frp^{wA}_{\y}$.  Since $g$ is well-defined up to right translation by $U_{0}$, and $\ell_{0}$ is well-defined up to right translation by $L_{0}\cap{}^{w}Q_{0}$, $g\ell_{0}$ gives a well-defined coset in $G_{\un 0}/P^{wA}_{0}$, because $P^{wA}_{0}=U_{0}(L_{0}\cap{}^{w}{Q})$. Therefore for fixed $[w]$, the corresponding part of $\cX^{\Gmb}$ consists of pairs $(v,gP_{0}^{wA})\in \frg_{\un\y}\times G_{\un 0}/P^{wA}_{0}$ such that $v\in \Ad(g)\frp^{wA}_{\y}$, hence the description \eqref{X fix}.
\end{proof}

To summarize we have a diagram in which the rhombus in the middle is Cartesian 
\begin{equation}\label{big d}
\xymatrix{\bigsqcup_{[A]\in W_{L_{0}}\bs \Xi_{F}} X^{A}\ar[dd]^{f_{0}}\ar@{=}[r] & \cX^{\Gmb}\ar@{^{(}->}[r]^{i} & \cX\ar[dl]\ar[dr]\ar@/_{1pc}/[ddll]_{f}\ar@/^{1pc}/[ddrr]^{p}\\
& G_{\un 0}\twtimes{P_{0}}\frp_{\y}\ar[dl]^{c}\ar[dr] ^{d}& &  [\dot\frl/L]\ar[dl]^{a}\ar[dr]_{b}\\
\frg_{\un\y} & & [\frl/L] & & [\fkm/M]}
\end{equation}

\begin{lemma}\label{l:IF} There is a canonical isomorphism in $\cD_{G_{\un0,\dm}\times\Gdil}(\frg_{\un\y})$
\begin{equation*}
 f_{0,!}i^{*}p^{*}(\cL^{\sh}_{M})\cong \bigoplus_{[A]\in \un\Pi^{-1}_{J}[F]}\bI_{[A]}.
\end{equation*}
In particular, we have an isomorphism of perverse sheaves
\begin{equation*}
\pH f_{0,!}i^{*}p^{*}(\cL^{\sh}_{M})\cong \II_{[F]}.
\end{equation*}
\end{lemma}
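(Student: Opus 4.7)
The plan is to use the decomposition $\cX^{\Gmb} = \bigsqcup_{[A] \in W_{L_{0}}\bs \Xi_{F}} X^{A}$ from Lemma~\ref{l:X fix}, restrict the map $f_{0}=f\circ i$ and the composition $p\circ i$ to each component, and identify the resulting summands with the spiral inductions $\bI_{A}$. Tracing through the construction of $\cX$ and the proof of Lemma~\ref{l:X fix}, one sees that each $X^{A}$ embeds into $\cX$ via the obvious map $(g,v)\mapsto(v,gU_{0},wQ^{A})$ and that $f_{0}|_{X^{A}}$ tautologically agrees with $\a^{A}: X^{A}\to\frg_{\un\y}$. Combining with the disjointness of the decomposition, one gets
\begin{equation*}
f_{0,!}i^{*}p^{*}\cL^{\sh}_{M}\cong\bigoplus_{[A]\in W_{L_{0}}\bs \Xi_{F}}\a^{A}_{!}(p\circ i|_{X^{A}})^{*}\cL^{\sh}_{M}.
\end{equation*}

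The next step is to identify $(p\circ i|_{X^{A}})^{*}\cL^{\sh}_{M}$ with $\b^{A,*}\cL^{A,\sh}$ for each $A\in\Xi_{F}$. The map $p\circ i|_{X^{A}}: X^{A}\to[\fkm/M]$ factors as $X^{A}\xrightarrow{\b^{A}}[\frl^{A}_{\y}/L^{A}_{0}]\hookrightarrow[\frl^{A}/L^{A}]\cong[\fkm/M]$, where the last isomorphism is the canonical identification from \S\ref{sss:sp ind} (induced by any $g\in G_{\un0}$ conjugating $(L^{A},L^{A}_{0},\frl^{A},\frl^{A}_{*})$ to $(M,M_{0},\fkm,\fkm_{*})$). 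Under this canonical identification, the cuspidal local system $\cL_{M}$ on $\cO\subset\fkm$ corresponds to $\cC^{A}$ on $\cO_{\frl^{A}}\subset\frl^{A}$, so $\cL^{\sh}_{M}$ pulls back to $\cC^{A,\sh}$. By cleanness of $\cC^{A}$ (as used in \eqref{ind clean}) combined with $\cC^{A}|_{\mathring{\frl}^{A}_{\y}}=\cL^{A}$, the restriction of $\cC^{A,\sh}$ to $\frl^{A}_{\y}$ coincides with $\cL^{A,\sh}$. Hence each summand equals $\bI_{A}$.

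Finally, to match indexing sets, I identify $W_{L_{0}}\bs\Xi_{F}$ with $\un\Pi_{J}^{-1}[F]$: the fiber $\Pi_{J}^{-1}(F)$ is exactly $\Xi_{F}$ by construction of $\Pi_{J}$, and passing to $W_{x/m}$-orbits gives $\un\Pi_{J}^{-1}[F]=\Stab_{W_{x/m}}(F)\bs\Xi_{F}$. Since $W_{F}=W_{L}$ is the pointwise stabilizer of $F$ in $\Wa$, and $W_{L}\cap W_{x/m}$ coincides with $W_{L_{0}}$ (as $L_{0}$ is the centralizer in $G_{\un0}$ of the grading element $\jmath_{x}\in\fra_{L}$, cf.~Lemma~\ref{l:j integral}, so its Weyl group equals the subgroup of $W_{L}$ fixing $x/m$), the identification follows. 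The perverse cohomology statement then follows by applying $\pH$. The main obstacle will be the careful bookkeeping of the canonical identifications among the cuspidal data $(\cL,\cL^{A},\cL_{M},\cC^{A})$ under the various conjugations in $G_{\un0}$, together with verifying that the cleanness of $\cC^{A}$ yields the claimed restriction $\cC^{A,\sh}|_{\frl^{A}_{\y}}=\cL^{A,\sh}$; once this is pinned down, the geometric decomposition of $\cX^{\Gmb}$ does the rest in a functorial, $G_{\un0,\dm}\times\Gdil$-equivariant manner.
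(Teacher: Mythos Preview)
Your decomposition of $f_{0,!}i^{*}p^{*}\cL^{\sh}_{M}$ into summands indexed by $W_{L_{0}}\bs\Xi_{F}$ is correct, as is the identification of the summand for $A\in\frF^{\xi}$ with $\bI_{A}$. The gap is in your claim that ``the fiber $\Pi_{J}^{-1}(F)$ is exactly $\Xi_{F}$.'' This is false in general: by definition $\Pi_{J}$ has domain $\frF^{\xi}$, so $\Pi_{J}^{-1}(F)=\frF^{\xi}\cap\Xi_{F}$, whereas $\Xi_{F}$ consists of \emph{all} facets in the relevant $\Wa$-orbit having $F$ on their boundary. Being in the same $\Wa$-orbit as an element of $\frF^{\xi}$ guarantees that $L^{A}$ is abstractly isomorphic to $M$, but it does \emph{not} guarantee that the $\ZZ$-grading $\frl^{A}_{*}$ is $G_{\un0}$-conjugate to $\fkm_{*}$; the latter is exactly the condition $A\in\frF^{\xi}$. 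Consequently, your second step breaks down for $A\in\Xi_{F}\setminus\frF^{\xi}$: no $g\in G_{\un0}$ conjugates $(L^{A},L^{A}_{0},\frl^{A},\frl^{A}_{*})$ to $(M,M_{0},\fkm,\fkm_{*})$, so the identification from \S\ref{sss:sp ind} is unavailable, $\cL^{A}$ is not even defined, and the claimed equality $\cC^{A,\sh}|_{\frl^{A}_{\y}}=\cL^{A,\sh}$ has no meaning.

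What is actually needed, and what the paper supplies, is the vanishing $\a^{A}_{!}\g^{A,*}\cL^{\sh}_{M}=0$ for $A\in\Xi_{F}\setminus\frF^{\xi}$. This is proved by showing $\frl^{A}_{\y}\cap\cO^{A}=\varnothing$ (where $\cO^{A}\subset\frl^{A}$ corresponds to $\cO$ under $[\frl^{A}/L^{A}]\cong[\fkm/M]$), whence the pullback vanishes by cleanness of the cuspidal local system. The emptiness of this intersection is not formal: one argues by contradiction, completing a hypothetical $e'\in\frl^{A}_{\y}\cap\cO^{A}$ to an $\sl_{2}$-triple $(e',h',f')$ compatible with the grading, and then using that $\cO^{A}$ is \emph{distinguished} (as it supports a cuspidal local system) to force the grading $\jmath$ to agree with $\y h'/2$, contradicting $A\notin\frF^{\xi}$. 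This is the missing idea in your argument.
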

\begin{proof} The map $\Pi_{J}:\frF^{\xi}\to \frF^{\xi}_{J}$ is $W_{x/m}$-equivariant, therefore we have a bijection $\un\Pi^{-1}([F])\cong W_{x/m,F}\bs\Pi^{-1}_{J}(F)= W_{L_{0}}\bs \Pi^{-1}_{J}(F)$. For $A\in \Xi_{F}$ we have the maps
\begin{equation*}
\xymatrix{\frg_{\un\y} &  X^{A}\ar[rr]^{\g^{A}}\ar[l]_{\a^{A}} && [\frl^{A}/L^{A}]\cong[\fkm/M]}
\end{equation*}
where $\g^{A}$ is the restriction of $p\circ i$ in diagram \eqref{big d} to $X^{A}$.  From the diagram \eqref{big d} we see that the contribution of $[A]\in W_{L_{0}}\bs \Xi_{F}$ to the complex $ f_{0,!}i^{*}p^{*}(\cL^{\sh}_{M})$ is $\a^{A}_{!}\g^{A,*}\cL^{\sh}_{M}$. Therefore we reduce to showing the following two statements:
\begin{enumerate}
\item If $A\in \frF^{\xi}\cap \Xi_{F}=\Pi^{-1}_{J}(F)$, then there is a canonical isomorphism $\a^{A}_{!}\g^{A,*}\cL^{\sh}_{M}\cong \bI_{A}$.
\item If $A\notin \frF^{\xi}$, then $\a^{A}_{!}\g^{A,*}\cL^{\sh}_{M}=0$;
\end{enumerate}
The morphism $\g^{A}$ is the composition
\begin{equation*}
\g^{A}: X^{A}\xr{\b^{A}} [\frl^{A}_{\y}/L^{A}_{0}]\xr{\iota}[\frl^{A}/L^{A}]\cong [\fkm/M]
\end{equation*}
When $A\in \frF^{\xi}$, we have $\iota^{*}\cL^{\sh}_{M}\cong\cL^{A,\sh}$, and statement (1) follows from the definition of spiral induction $I_{A}$.

Now suppose $A\in\Xi_{F}$ but $A\notin \frF^{\xi}$. Let $\jmath\in \xcoch(T/Z_{L^{A}})$ be the element giving the $\ZZ$-grading $\frl^{A}_{*}$ of $\frl^{A}$. Let $\cO^{A}\subset \frl^{A}$ be the nilpotent orbit corresponding to $\cO$ under the canonical isomorphism $[\frl^{A}/L^{A}]\cong [\fkm/M]$. For any isomorphism $L^{A}\cong M$, the grading $\fkm_{*}$ induces a $\ZZ$-grading on $\frl^{A}$. Since the grading $\fkm_{*}$ is induced by $\y h_{0}/2$ for an $\sl_{2}$-triple $(e_{0},h_{0},f_{0})$ in $\fkm$ with $e_{0}\in \cO$, the induced grading on $\frl^{A}$ is induced by $\y h/2$ for an $\sl_{2}$-triple $(e,h,f)$ in $\frl^{A}$ with $e\in \cO^{A}$. Since $A\notin\frF^{\xi}$, the gradings $\y h/2$ and $\jmath$ are not conjugate under $L^{A}$. 

We claim that $\frl^{A}_{\y}\cap\cO^{A}=\varnothing$, hence the vanishing of $\iota^{*}\cL^{\sh}_{M}$ by the cleanness of $\cL_{M}$, and statement (2) follows. We show the claim by contradiction. Suppose $e'\in \frl^{A}_{\y}\cap\cO^{A}$, we may complete it into an $\sl_{2}$-triple $(e',h',f')$ where $h'\in \frl^{A}_{0}$ and $f'\in \frl^{A}_{-\y}$. Then the adjoint action of $h'/2$ gives another $\ZZ$-grading $\frl^{A}=\oplus_{n\in\ZZ}({}^{h'}_{2n}\frl^{A})$, compatible with the original $\ZZ$-grading $\frl^{A}_{*}$ (i.e., $\frl^{A}$ is the direct sum of $\frl^{A}_{n}\cap {}^{h'}_{2n'}\frl^{A}$ for $n,n'\in\ZZ$) because $h'\in\frl^{A}_{0}$. We then have a third $\ZZ$-grading $\frl^{A}=\oplus_{n\in\ZZ}\frl^{A}(n)$ given by the difference $\jmath-\y h'/2$: $\frl^{A}(n):=\oplus_{n'\in\ZZ}(\frl^{A}_{\y n'+n}\cap{}^{h'}_{2n'}\frl^{A})$. Then $e$ lies in the Levi subalgebra  $\frl^{A}(0)$ of $\frl^{A}$. Since $\cO^{A}$ is a distinguished nilpotent class of $L^{A}$ (as it supports a cuspidal local system), we must have $\frl^{A}(0)=\frl^{A}$, hence $\jmath$ and $\y h'/2$ induce the same grading on $\frl^{A}$. However, this contradicts the assumption that $\jmath$ is not conjugate to any such grading. This completes the proof of statement (2) and hence of the corollary.
\end{proof}

\sss{Finish of the proof of Proposition \ref{p:WJ}} By the diagram \eqref{big d} and proper base change, we have
\begin{equation*}
 c_{!}d^{*}\ind^{\frl}_{\fkm}(\cL^{\sh}_{M})= c_{!}d^{*} a_{!}b^{*}(\cL^{\sh}_{M})\cong  f_{!}p^{*}(\cL^{\sh}_{M}) \in \cD_{G_{\un0,\dm}\times\Gdil}(\frg_{\un\y}).
\end{equation*}
By Theorem \ref{th:L}, $\ind^{\frl}_{\fkm}(\cL^{\sh}_{M})$ carries an action of $\HH_{c}(W_{J})$ under which the actions of $u$ and $\d/e$ are given by the equivariant parameters of $\Gdil$ and $\Grot$ respectively (see \S\ref{sss:WJ poly}). Therefore,  $ f_{!}p^{*}(\cL^{\sh}_{M})$, as an object in $\cD_{G_{\un0,\dm}\times\Gdil}(\frg_{\un\y})$, carries an action of $\HH_{c}(W_{J})$. 

Let $\ov f:[\cX/\Gmb]\to \frg_{\un\y}$ be the induced map from $f$. Then $\ov{f}_{!}p^{*}(\cL^{\sh}_{M})\in \cD_{G_{\un0}}(\frg_{\un\y})$ carries an action of $\HH_{c}(W_{J})$. By the definition of $\Gmb$ and direct calculation, the map $p:\cX\to [\frl^{A}/L^{A}]\cong[\fkm/M]$ is equivariant under the $\Gmb$ action on $\cX$ and its action on $[\frl^{A}/L^{A}]$ through the embedding $\Gmb\incl \Grot\times\Gdil$ given by $t\mapsto (t^{m/e},t^{-\y})$. Therefore, both $-u/\y$ and $\d/m$ act as the equivariant parameter for $\Gmb$, i.e., the $\HH_{c}(W_{J})$-action on $\ov{f}_{!}p^{*}(\cL^{\sh}_{M})\in \cD_{G_{\un0}}(\frg_{\un\y})$ factors through the quotient
\begin{equation*}
\bH_{v}:=\HH_{c}(W_{J})/(u+\y\d/m).
\end{equation*}
We denote the image of $-u$ and $\y\d/m$ in $\bH_{v}$ by $v$. In particular, $\bH_{v}$ acts on the $\ZZ$-graded $G_{\un0}$-equivariant perverse sheaf $\oplus_{n\in\ZZ}\pH^{n} {\ov f}_{!}p^{*}(\cL^{\sh}_{M})$ on $\frg_{\un\y}$, with $v$ acting as multiplication by $\y$ times the equivariant parameter of $\Gmb$.

Let $\cK=p^{*}(\cL^{\sh}_{M})\in \cD_{G_{\un0}\times\Gmb}(\cX)$. Let $\cK_{0}=i^{*}\cK$.  Applying Lemma \ref{l:loc} to the $\Gmb$-equivariant morphism $f:\cX\to \frg_{\un\y}$, using the specialization at $v=\y/m$, we get the following isomorphism for $n$ even and sufficiently large
\begin{equation}\label{large n}
\pH^{n}(i^{*})_{v=\y/m}: \pH^{n}\ov f_{!}\cK\isom\pH^{\ev} f_{0,!}\cK_{0}.
\end{equation}
We define an action of $\bH_{v}$ on $\pH^{\ev} f_{0,!}\cK_{0}$ as follows. For $a_{2s}\in \bH_{v}$ homogeneous of degree $2s\ge0$,  choose large even $n$ such that \eqref{large n} holds. Then $a_{2s}$ gives an action map
\begin{equation*}
a_{2s}: \pH^{\ev} f_{0,!}\cK_{0}\xleftarrow{\pH^{n}(i^{*})_{v=\y/m}}\pH^{n}\ov f_{!}\cK\xr{a_{2s}} \pH^{n+2s}\ov f_{!}\cK\xr{\pH^{n+2s}(i^{*})_{v=\y/m}}\pH^{\ev} f_{0,!}\cK_{0}.
\end{equation*}
It is easy to check that this map is independent of the choice of $n$ and it defines an action of $\bH_{v}$ on $\pH^{\ev} f_{0,!}\cK_{0}$. Under this action $v$ acts by $\y/m$, therefore it descends to an action of $\bH_{v}/(v-\y/m)\cong\HH_{c,\y/m}(W_{J})$ on $\pH^{\ev} f_{0,!}\cK_{0}$. Similar argument gives an action of $\bH_{v}/(v-\y/m)\cong\HH_{c,\y/m}(W_{J})$ on $\pH^{\odd} f_{0,!}\cK_{0}$.

By Lemma \ref{l:IF}, $\pH f_{0,!}\cK_{0}=\pH^{\ev} f_{0,!}\cK_{0}\oplus \pH^{\odd} f_{0,!}\cK_{0}\cong\II_{[F]}$, therefore we have constructed an action of $\HH_{c,\y/m}(W_{J})$ on $\II_{[F]}$. The action of the polynomial part of $\HH_{c,\y/m}(W_{J})$ is the same as the one defined in \S\ref{sss:poly action}. This finishes the proof of Proposition \ref{p:WJ}.

\subsection{Finish of the proof of Theorem \ref{th:main}}
\sss{Changing $J$} Suppose $J'\subset J$, then we have a map
$$\Pi^{J'}_{J}: \frF^{\xi}_{J'}\to \frF^{\xi}_{J}$$
sending $F'\in \frF^{\xi}_{J'}$ to the unique facet $F\in \frF^{\xi}_{J}$ on its boundary. We have   $\Pi_{J}=\Pi^{J'}_{J}\circ\Pi_{J'}$. Let $F'\in\frF^{\xi}_{J'}$ and $F=\Pi^{J'}_{J}(F')\in\frF^{\xi}_{J}$. Then $\II_{[F']}$ is a direct summand of $\II_{[F]}$ since $\un\Pi_{J'}([A])=[F']$ implies $\un\Pi_{J}([A])=[F]$. On the other hand, $\HH_{c,\y/m}(W_{J'})$ is a subalgebra of $\HH_{c,\y/m}(W_{J})$.

\sss{Proof of Theorem \ref{th:main}} For $J'\subset J\subsetneq I^{\xi}$,  Proposition \ref{p:WJ comp} implies that the inclusion $\II_{[F']}\incl \II_{[F]}$ is a map of $\HH_{c,\y/m}(W_{J'})$-modules (with the $\HH_{c,\y/m}(W_{J'})$-action on $\II_{[F]}$ coming from its $\HH_{c,\y/m}(W_{J})$-action by restriction). Since $\Wa^{\xi}$ is generated by the subgroups $W_{J}$ for various proper subsets $J\subset I^{\xi}$, Theorem \ref{th:main} now follows from Propositions \ref{p:WJ}.

\end{document}